\theoremstyle{plain}
\theoremstyle{plain}
\newtheorem{theorem}{Theorem}[section]
\numberwithin{equation}{section}	
\theoremstyle{plain}
\newtheorem{lemma}[theorem]{Lemma}
\theoremstyle{plain}
\theoremstyle{definition}
\newtheorem{remark}{Remark}[section]
\DeclarePairedDelimiter{\abs}{\lvert}{\rvert}
\DeclarePairedDelimiter{\norma}{\lVert}{\rVert}
\renewcommand{\epsilon}{\varepsilon}
\DeclareMathOperator{\defi}{def} 
\newcommand{\numberset}{\mathbb}
\newcommand{\N}{\numberset{N}}			
\newcommand{\R}{\numberset{R}}			
\newcommand{\sfera}{\numberset{S}}		
\newcommand{\Haus}{\mathcal{H}}			
\newcommand{\loc}{{\rm loc}}
\newcommand{\df}{\delta_{f}(u,\kappa)}
\def\Xint#1{\mathchoice
	{\XXint\displaystyle\textstyle{#1}}%
	{\XXint\textstyle\scriptstyle{#1}}%
	{\XXint\scriptstyle\scriptscriptstyle{#1}}%
	{\XXint\scriptscriptstyle\scriptscriptstyle{#1}}%
	\!\int}
\def\XXint#1#2#3{{\setbox0=\hbox{$#1{#2#3}{\int}$ }
		\vcenter{\hbox{$#2#3$ }}\kern-.6\wd0}}
\def\dashint{\Xint-}
\begin{document}
	
		\title[Quantitative symmetry for semilinear equations in~$\R^n$]{A quantitative study of radial symmetry for solutions to semilinear equations in~$\R^n$}
		
		\author[Giulio Ciraolo]{Giulio Ciraolo \orcidlink{0000-0002-9308-0147}}
		\address[]{Giulio Ciraolo. Dipartimento di Matematica ‘Federigo Enriques’, Università degli Studi di Milano, Via Cesare Saldini 50, 20133, Milan, Italy}
		\email{giulio.ciraolo@unimi.it}
 		
 		\author[Matteo Cozzi]{Matteo Cozzi \orcidlink{0000-0001-6105-692X}}
 		\address[]{Matteo Cozzi. Dipartimento di Matematica ‘Federigo Enriques’, Università degli Studi di Milano, Via Cesare Saldini 50, 20133, Milan, Italy}
 		\email{matteo.cozzi@unimi.it}
		
		\author[Michele Gatti]{Michele Gatti \orcidlink{0009-0002-6686-9684}} 
		\address[]{Michele Gatti. Dipartimento di Matematica ‘Federigo Enriques’, Università degli Studi di Milano, Via Cesare Saldini 50, 20133, Milan, Italy}
		\email{michele.gatti1@unimi.it}
		
		\subjclass[2020]{Primary 35B35, 35J91; Secondary 35B33}
		\date{\today}
		\keywords{Moving planes method, semilinear elliptic equations, quantitative estimates, stability}
		
		\begin{abstract}
			A celebrated result by Gidas, Ni \& Nirenberg asserts that positive classical solutions, decaying at infinity, to semilinear equations~$\Delta u +f(u)=0$ in~$\R^n$ must be radial and radially  decreasing. In this paper, we consider both energy solutions in~$\mathcal{D}^{1,2}(\R^n)$ and non-energy local weak solutions to small perturbations of these equations, and study its quantitative stability counterpart.
			
			To the best of our knowledge, the present work provides the first quantitative stability result for non-energy solutions to semilinear equations involving the Laplacian, even for the critical nonlinearity.
		\end{abstract}
	
		\maketitle
		
	
	\section{Introduction}
	\label{sec:intro}

	\noindent
	In the seminal paper~\cite{gnn}, Gidas, Ni \& Nirenberg exploited the method of moving planes to prove the radial symmetry and monotonicity of positive solutions to semilinear equations such as 
	\begin{equation}
	\label{eq:eq_semil_kx}
		\Delta u + \kappa(x) f(u) = 0 \quad \mbox{in } \R^n ,
	\end{equation}
	where~$n \ge 3$, the nonlinearity~$f$ satisfies some regularity and growth assumptions, the solution~$u$ decays at infinity with a rate connected with the behavior of~$f$, and~$\kappa$ is either a positive rotationally symmetric and strictly decreasing function or a positive constant -- in the latter case, the symmetry of~$u$ naturally has to be understood up to translations. See also~\cite{li-ni-asymp-I,li-ni-asymp-II,li-ni-mak} for further related results.

	The main goal of this paper is to address a quantitative stability result for this problem. Roughly speaking, we assume that~$\kappa$ is close to be a constant and show that, under some suitable assumptions, the solution is almost radial. We will also provide quantitative estimates for this result, where the proximity to a radial solution is quantified in terms of some deficit describing the closeness of~$\kappa$ to a constant. 

	In order to properly state the results and clarify the motivations, we divide the introduction into three different subsections. In the first subsection we describe the state of the art for the critical equation, which serves as a motivation for the present manuscript. Then, in the remaining two subsections, we state our main contributions.

	\subsection{Quantitative results for the critical Laplace equation}
	As far as we know, quantitative studies for symmetry of solutions to semilinear equations~\eqref{eq:eq_semil_kx} have been carried  out only in the case of the critical Laplace equation with~$f(u)=u^{2^*-1}$, i.e.\ when~\eqref{eq:eq_semil_kx} takes the form
	\begin{equation}
	\label{eq:intro_critical}
		\Delta u + \kappa_0 u^{2^*-1} = 0 \quad \mbox{in } \R^n,
	\end{equation}
	for some fixed~$\kappa_0>0$, in the recent works~\cite{cfm,dsw,fg}. It is well-known that~\eqref{eq:intro_critical} arises as the Euler-Lagrange equation associated to the problem of finding the optimal constant in the Sobolev inequality, namely
	\begin{equation}
	\label{eq:Sob_quot}
		S \coloneqq \min_{u \in \mathcal{D}^{1,2}(\R^n) \setminus \{ 0 \}} \frac{ \norma*{\nabla u}_{L^2(\R^n)}}{\norma*{u}_{L^{2^\ast}\!(\R^n)}},
	\end{equation} 
	as well as in the study of Yamabe problem, in particular for the prescribed scalar curvature problem on the sphere. Here
	\begin{equation*}
		\mathcal{D}^{1,2}(\R^n) \coloneqq \left\{u \in L^{2^\ast}\!(\R^n) \,\big\lvert\, \nabla u \in L^2(\R^n) \right\}.
	\end{equation*}
	We notice that~\eqref{eq:intro_critical} is invariant under scaling and translations and, assuming~$\kappa_0=1$, the only positive solutions are given by the so-called \emph{Talenti bubbles}
	\begin{equation*}
		\label{eq:talentiane}
		u(x) = U[z,\lambda] (x) \coloneqq \left( \frac{\lambda \sqrt{n(n-2)}}{1+\lambda^2 \,\abs*{x-z}^2} \right)^{\!\!\frac{n-2}{2}} \!,
	\end{equation*}
	with~$n \geq 3$, where~$\lambda>0$ and~$z \in \R^n$ are the scaling and translation parameters, respectively. This classification result has been obtained in~\cite{au,gnn,oba,tal} for minimizers of~\eqref{eq:Sob_quot} or by assuming a suitable decay at infinity, namely
	\begin{equation*}
		u(x) = O \left(\abs{x}^{2-n}\right) \quad \mbox{as } \abs{x} \to \infty \,.
	\end{equation*}
	Remarkably, these variational and decay assumptions can be removed as shown by Caffarelli, Gidas \& Spruck~\cite{cgs} and Chen \& Li~\cite{cl}, and hence Talenti bubbles are the only positive solutions to~\eqref{eq:intro_critical} without any further requirement on the solution.
	
	Under the assumption~$u \in \mathcal{D}^{1,2}(\R^n)$, a stability version of this result was proved by Struwe~\cite{struwe}, who showed that if~$u$ \emph{almost} solves~\eqref{eq:intro_critical} then~$u$ is \emph{close} to the sum of Talenti bubbles. This phenomenon is called \emph{bubbling}.

	A quantitative version of Struwe's theorem~\cite{struwe} was later studied by Ciraolo, Figalli \& Maggi~\cite{cfm} under the a priori assumption 
	\begin{equation}
	\label{eq:bound-energia}
		\frac{1}{2} S^n \leq \int_{\R^n} \,\abs*{\nabla u}^2 \, dx \leq \frac{3}{2} S^n,
	\end{equation}
	which entails that the energy of the solution is approximately that of one bubble, thus preventing bubbling. A sharp quantitative study of bubbling was subsequently obtained by Figalli \& Glaudo~\cite{fg} and Deng, Sun \& Wei~\cite{dsw}, still by assuming that~$u\in\mathcal{D}^{1,2}(\R^n)$ but dropping~\eqref{eq:bound-energia} and considering, instead, its counterpart for~$m\in \N$ bubbles.

	In these papers, the stability result is understood in the sense of~$\mathcal{D}^{1,2}(\R^n)$, thus providing quantitative estimates on the $L^2$-norm of the gradient of the difference between the solution and a suitable sum of bubbles in terms of some deficit~$\delta(u)$, which measures how close~$\kappa$ is to being constant.

	It is interesting to understand how~$\delta(u)$ is defined. Let us consider~\eqref{eq:eq_semil_kx} in case of the critical nonlinearity, i.e.
	\begin{equation}
	\label{eq:eq_semil_crti}
		\Delta u + \kappa(x) u^{2^\ast-1} = 0 \quad \mbox{in } \R^n,
	\end{equation}
	or, in the weak formulation,
	\begin{equation}
	\label{eq:criti1}
	- \int_{\R^n} \nabla u \cdot \nabla \phi \, dx + \int_{\R^n}\kappa(x) u^{2^*-1} \phi \, dx = 0 \quad \mbox{for all } \phi \in \mathcal{D}^{1,2}(\R^n).
	\end{equation}
	If~$\kappa \equiv \kappa_0$ is constant, then~$u\in \mathcal{D}^{1,2}(\R^n)$ must be a Talenti bubble and it holds
	\begin{equation}
	\label{eq:ref-const}
		\kappa_0=\kappa_0(u) \coloneqq \frac{\int_{\R^n} \kappa u^{2^\ast} dx}{\int_{\R^n} u^{2^\ast} dx} = \frac{\int_{\R^n} \,\abs{\nabla u}^2 \, dx}{\int_{\R^n} u^{2^\ast} dx} \,.
	\end{equation}
	Hence, it is convenient to see~\eqref{eq:criti1} as
	\begin{equation*}
	\label{eq:criti2}
		- \int_{\R^n} \nabla u \cdot \nabla \phi \, dx + \kappa_0 \int_{\R^n} u^{2^*-1} \phi \, dx = - \int_{\R^n} (\kappa(x)-\kappa_0) \, u^{2^*-1} \phi \, dx \quad \mbox{for all } \phi \in \mathcal{D}^{1,2}(\R^n)
	\end{equation*}
	and, since~$u,\phi  \in \mathcal{D}^{1,2}(\R^n)$, it is natural to measure how much~$\kappa$ is close to~$\kappa_0$ in terms of the deficit
	\begin{equation}
	\label{eq:def_cfm}
		\delta(u) \coloneqq \norma*{\left(\kappa-\kappa_0\right)u^{2^*-1}}_{L^{(2^\ast)'}(\R^n)}.
	\end{equation}
	See also \cite{cfm} for a more precise explanation. Hence, the deficit used in~\cite{cfm,dsw,fg} for the critical Laplace equation directly follows from the energy assumption~$u \in \mathcal{D}^{1,2}(\R^n)$. More generally, the quantitative analyses performed in~\cite{cfm,dsw,fg} not only rely on the assumption that the solution belongs to the energy space~$\mathcal{D}^{1,2}(\R^n)$, but also strongly exploit the structure of the equation~\eqref{eq:eq_semil_crti} and the knowledge of the explicit solutions~$U[z,\lambda]$. 

	Thus, the results in~\cite{cfm,dsw,fg} can be seen as a quantitative version of the Gidas-Ni-Nirenberg theorem for energy solutions of the critical Laplace equation. However, this approach does not seem to be applicable neither when one drops the energy assumption~$u \in \mathcal{D}^{1,2}(\R^n)$ -- as done in~\cite{cgs} and~\cite{cl} -- nor to equations with a more general nonlinearity as the one in~\eqref{eq:eq_semil_kx}.

	The goal of this paper is to tackle these two problems and provide quantitative stability estimates. This will be done by exploiting a careful quantitative adaptation of the method of moving planes -- which seems the most natural approach to be adopted under such a generality, also for the proof of the exact symmetry result.

	The study of quantitative stability of symmetry via the method of moving planes is nowadays well-established. Starting from the paper of Aftalion, Busca \& Reichel~\cite{abr}, quantitative analyses of this method have been successfully applied to many different situations as in~\cite{cicopepo,cdppv,cfmnov,cirmagsak,cirvez}. In particular, we mention the recent work of Ciraolo, Cozzi, Perugini \& Pollastro~\cite{cicopepo} which contains a quantitative analysis of the Gidas-Ni-Nirenberg theorem for semilinear equations on a ball with Dirichlet boundary conditions. Nevertheless, the approach in the present paper and the one in~\cite{cicopepo} differ in many aspects, as we will explain in the remainder of the introduction.\newline
	
	We now describe our main results.
	
	
	\subsection{Quantitative results for energy solutions to semilinear equations} Our first main result deals with general semilinear equations. Let~$u \in \mathcal{D}^{1,2}(\R^n)$ be a non-negative, non-trivial weak solution to
	\begin{equation}
		\label{eq:mainprob-f}
		\Delta u + \kappa(x) f(u) = 0 \quad \textmd{in } \R^n,
	\end{equation}
	with~$f:[0,+\infty) \to \R$,~$f \in C^{0,1}_{\loc}\left([0,+\infty)\right)$, and~$\kappa \in L^\infty(\R^n)$ a non-negative function. In partial analogy to~\eqref{eq:def_cfm}, we measure the proximity of~$\kappa$ to being constant through the deficit
	\begin{equation} \label{eq:deficit2}
		\df \coloneqq \inf_{t \in \R} \, \norma*{\left(\kappa-t\right)f(u)}_{L^{(2^\ast)'}(\R^n)}.
	\end{equation}
	Unlike in~\cite{cfm}, where an energy estimate is provided for the distance between the solution and a reference solution of the symmetric configuration  -- Talenti bubbles in that case --, we will instead measure directly how far the solution deviates from the radial symmetry. For this reason, an a priori upper bound on the solution is needed in order to have uniform estimates in terms of the deficit~\eqref{eq:deficit2}, hence we assume that
	\begin{equation}
	\label{eq:C0_def}
		\norma*{u}_{L^\infty(\R^n)} \leq C_0 ,
	\end{equation}
	for some~$C_0 \geq 1$.
	
	Concerning the nonlinearity, we suppose that~$f$ is subcritical in the sense that there exists some constant~$f_0>0$ such that
	\begin{equation}
	\label{eq:ip-f-sub}
		\abs*{f(u)} \leq f_0 u^p \quad \mbox{for every } 0 \le u \le C_0,
	\end{equation}
	where, from now on, we adopt the notation
	\begin{equation*}
		p \coloneqq 2^\ast - 1 = \frac{n + 2}{n - 2}.
	\end{equation*}
	Note that, in view of this condition and~$\kappa \in L^\infty(\R^n)$,~$\df$ is a well-defined real number. Moreover, we further prescribe the behavior of~$f$ by requiring that there exists a constant~$L>0$ such that
	\begin{equation}
		\label{eq:ip-f-tipolip}
		\frac{f(u_2)-f(u_1)}{u_2-u_1} \leq L u_2^{p - 1} \quad \text{for every } 0\leq u_1<u_2\leq C_0.
	\end{equation}
	Here, the constant~$C_0$ is the one appearing in~\eqref{eq:C0_def}. This condition can be found in~\cite{dam-ram} and can be traced back to~\cite{c-li}.
	
	Under these structural assumptions we are able to prove the following result.

	\begin{theorem}
	\label{th:maintheorem-f}
	Let~$n\geq 3$ be an integer,~$\kappa \in L^\infty(\R^n)$ be a non-negative function, and~$f \in C^{0,1}_{\loc}\left([0,+\infty)\right)$ be satisfying~\eqref{eq:ip-f-sub}-\eqref{eq:ip-f-tipolip}. Let~$u \in \mathcal{D}^{1,2}(\R^n)$ be a non-negative, non-trivial weak solution to~\eqref{eq:mainprob-f} satisfying
	\begin{equation}
		\label{eq:decadimento}
		u(x) \leq \frac{C_0}{1+\abs{x}^{n-2}} \quad \mbox{for a.e.~} x \in \R^n,
	\end{equation}
	for some~$C_0 \ge 1$.
	
	There exist a point~$\mathcal{O} \in \R^n$ and a large constant~$C \ge 1$ such that, if~$\df \in (0,1)$, then
	\begin{equation} \label{eq:mainLinftyest}
		\abs*{u(x)-u(y)} \leq C \,\abs{\log\df}^{-\vartheta}
	\end{equation}
	for every~$x,y \in \R^n$ satisfying~$\abs*{x-\mathcal{O}}=\abs*{y-\mathcal{O}}$, where~$\df$ is given by~\eqref{eq:deficit2} and~$\vartheta = \frac{n-2}{12}$. Moreover, if~$u_\Theta$ denotes any rotation of~$u$ with center~$\mathcal{O}$, we also have
	\begin{equation} \label{eq:mainD12est_thm1.1}
		\norma*{u-u_\Theta}_{\mathcal{D}^{1,2}(\R^n)} 
		\leq C \,\abs{\log\df}^{-\vartheta}.
	\end{equation}
	Finally, if we also suppose that~$\kappa \in C^1(\R^n)$ with $\nabla\kappa \in L^\infty(\R^n)$, then it holds
	\begin{equation}
		\label{eq:quasi-radmon}
		\partial_r u(x) \leq C \left(\abs*{\log\df}^{-\vartheta}  + \norma*{\nabla\kappa}_{L^\infty(\R^n)}\right) \!,
	\end{equation}
	for every~$x \in \R^n \setminus \{\mathcal{O}\}$, where~$\partial_r$ denotes the radial derivative with respect to~$\mathcal{O}$. The constant~$C$ depends only on~$n$,~$C_0$,~$f_0$,~$L$,~$\| f \|_{C^{0, 1}(\R^n)}$, and~$\| \kappa \|_{L^\infty(\R^n)}$.
	\end{theorem}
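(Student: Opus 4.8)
The plan is to run a quantitative version of the method of moving planes in every direction and then patch the resulting one-dimensional estimates into a statement of approximate rotational invariance about a point. Write $\delta \coloneqq \df$ and fix $t_\ast \in \bigl[0, \| \kappa \|_{L^\infty(\R^n)} \bigr]$ with $\| (\kappa - t_\ast) f(u) \|_{L^{(2^\ast)'}(\R^n)} \le 2\delta$; the restriction $t_\ast \le \| \kappa \|_{L^\infty}$ is harmless since $\kappa \ge 0$, so truncating any competitor in~\eqref{eq:deficit2} to $\bigl[0, \| \kappa \|_{L^\infty} \bigr]$ does not increase the norm. I would begin by recording two consequences of the a priori bounds used throughout: by interior elliptic estimates for~\eqref{eq:mainprob-f} together with~\eqref{eq:ip-f-sub} and~\eqref{eq:decadimento}, one has $\| u \|_{C^{1,\alpha}(\R^n)} \le C$ for some $\alpha \in (0,1)$ and the gradient decay $| \nabla u(x) | \le C (1 + | x |)^{1-n}$, with $C$ depending only on the quantities in the statement; and, since $(n-2)(p-1) = 4$, estimate~\eqref{eq:decadimento} gives $u^{p-1} \le C (1 + | x |)^{-4}$, hence $u^{p-1} \in L^{n/2}(\R^n)$.

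Fix a unit vector $e$ and, after a rotation, take $e = e_1$. For $\lambda \in \R$ let $\Sigma_\lambda \coloneqq \{ x_1 < \lambda \}$, let $x_\lambda$ be the reflection of $x$ across $\{ x_1 = \lambda \}$, and set $u_\lambda(x) \coloneqq u(x_\lambda)$, $w_\lambda \coloneqq u_\lambda - u$. Then $w_\lambda \in \mathcal{D}^{1,2}(\R^n)$ vanishes on $\partial \Sigma_\lambda$, and subtracting the equations for $u_\lambda$ and $u$ and using the decomposition $\kappa(x_\lambda) f(u_\lambda) - \kappa(x) f(u) = t_\ast \bigl( f(u_\lambda) - f(u) \bigr) + (\kappa(x_\lambda) - t_\ast) f(u_\lambda) - (\kappa(x) - t_\ast) f(u)$ gives, in $\Sigma_\lambda$,
\[
\Delta w_\lambda + c_\lambda w_\lambda = h_\lambda, \qquad c_\lambda \coloneqq t_\ast \, \frac{f(u_\lambda) - f(u)}{u_\lambda - u}, \qquad h_\lambda \coloneqq (\kappa(x) - t_\ast) f(u) - (\kappa(x_\lambda) - t_\ast) f(u_\lambda),
\]
where on $\{ w_\lambda < 0 \}$ condition~\eqref{eq:ip-f-tipolip} together with $0 \le t_\ast \le \| \kappa \|_{L^\infty}$ yields $c_\lambda \le L \| \kappa \|_{L^\infty} u^{p-1}$, while the change of variables $x \mapsto x_\lambda$ gives $\| h_\lambda \|_{L^{(2^\ast)'}(\Sigma_\lambda)} \le 4\delta$. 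Testing the equation in $\Sigma_\lambda$ against $w_\lambda^- \coloneqq \max \{ -w_\lambda, 0 \}$ (which vanishes on $\partial\Sigma_\lambda$) and using~\eqref{eq:Sob_quot}, one obtains the basic estimate
\[
\| \nabla w_\lambda^- \|_{L^2(\Sigma_\lambda)}^2 \le C \| \kappa \|_{L^\infty} L \, \bigl\| u^{p-1} \bigr\|_{L^{n/2}(\Sigma_\lambda \cap \{ w_\lambda < 0 \})} \| \nabla w_\lambda^- \|_{L^2(\Sigma_\lambda)}^2 + C \delta \, \| \nabla w_\lambda^- \|_{L^2(\Sigma_\lambda)},
\]
with $C = C(n)$. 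Since $\| u^{p-1} \|_{L^{n/2}(\Sigma_\lambda)} \to 0$ as $\lambda \to -\infty$, there is $\Lambda_0$, depending only on the data, such that for $\lambda \le -\Lambda_0$ the first term is absorbed and $\| w_\lambda^- \|_{\mathcal{D}^{1,2}(\Sigma_\lambda)} \le C \delta$: the procedure \emph{starts}. Choose a modulus $\omega$ with $\omega(\delta) \ge C \delta$ and $\omega(\delta) \to 0$ as $\delta \to 0^+$ (to be fixed below), and let $\bar\lambda$ be the supremum of the $\mu \in \R$ such that $\| w_\lambda^- \|_{\mathcal{D}^{1,2}(\Sigma_\lambda)} \le \omega(\delta)$ for all $\lambda \le \mu$; thus $\bar\lambda \ge -\Lambda_0$.

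The heart of the proof is the \emph{continuation}: showing that $\bar\lambda$ is finite and that $\| w_{\bar\lambda}^- \|_{\mathcal{D}^{1,2}(\Sigma_{\bar\lambda})} \le C\delta$. When the plane is still far out this is again the tail absorption above; when it is in an intermediate position, where $\| u^{p-1} \|_{L^{n/2}(\Sigma_\lambda)}$ is no longer small, one must upgrade the $\mathcal{D}^{1,2}$-smallness of $w_\lambda^-$ by hand, exploiting that the bad part of the potential is supported in $\{ w_\lambda < 0 \}$ and combining local elliptic regularity for $w_\lambda$ with quantitative maximum-principle and Hopf-type arguments in $\Sigma_\lambda$; a quantitative ``corner'' argument at $\{ x_1 = \bar\lambda \}$ then excludes $\bar\lambda = +\infty$ unless $u$ is already almost symmetric there. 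Running the same from $\lambda = +\infty$ gives a second critical value $\bar\lambda'$, and matching the two through the quantitative Hopf estimate yields $| \bar\lambda - \bar\lambda' | \le C \omega(\delta)$ and $\| u - u_{\bar\lambda}^{e_1} \|_{\mathcal{D}^{1,2}(\R^n)} \le C \omega(\delta)$, where $u_{\bar\lambda}^{e_1}$ is the reflection of $u$ across $H_{e_1} \coloneqq \{ x_1 = \bar\lambda \}$. Quantifying the unavoidable deterioration of these maximum-principle and Hopf estimates as the plane reaches its critical position, uniformly along the sweep, is the main obstacle, and it is what forces $\omega(\delta) \asymp | \log \delta |^{-\vartheta}$ with $\vartheta = \tfrac{n-2}{12}$. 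Carrying this out for every $e$ produces a family of approximate symmetry hyperplanes $\{ H_e \}_{e \in \mathbb{S}^{n-1}}$; a standard geometric argument --- intersecting $n$ mutually transversal ones and controlling how $e \mapsto H_e$ oscillates --- gives a point $\mathcal{O}$ with $\operatorname{dist}(\mathcal{O}, H_e) \le C \omega(\delta)$ for every $e$, and replacing $H_e$ by the parallel hyperplane through $\mathcal{O}$ costs only $C \omega(\delta)$ more in $\mathcal{D}^{1,2}$, by the gradient bound. Hence $u$ is $C\omega(\delta)$-close in $\mathcal{D}^{1,2}(\R^n)$ to its reflection across every hyperplane through $\mathcal{O}$; composing a bounded number (at most $\sim n$) of such reflections and applying the triangle inequality gives~\eqref{eq:mainD12est_thm1.1} for every rotation $u_\Theta$ about $\mathcal{O}$, and interpolating this bound against the uniform $C^{1,\alpha}$ bound in the spirit of Gagliardo--Nirenberg produces~\eqref{eq:mainLinftyest}; the exponent $\vartheta$ is the one surviving both the scale-by-scale propagation in the continuation and this interpolation.

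Finally, if $\kappa \in C^1(\R^n)$ with $\nabla \kappa \in L^\infty(\R^n)$, I would extract monotonicity from the same analysis. For each direction $e$ and each $\lambda$ up to the critical position, $w_\lambda \ge -C\omega(\delta)$ in $\Sigma_\lambda$ (by the $\mathcal{D}^{1,2}$ bound, local regularity, and interpolation), and a quantitative Hopf lemma at the moving boundary $\{ x \cdot e = \lambda \}$ --- which now also picks up the term $(\kappa(x_\lambda) - \kappa(x)) f(u)$, bounded locally by $C \| \nabla \kappa \|_{L^\infty}$ --- gives $\partial_e u(x) \ge -C \bigl( \omega(\delta) + \| \nabla \kappa \|_{L^\infty} \bigr)$ whenever $(x - \mathcal{O}) \cdot e \le 0$, after shifting the critical hyperplane onto $\mathcal{O}$ at the usual cost. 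Given $x \in \R^n \setminus \{ \mathcal{O} \}$, the choice $e = -(x - \mathcal{O})/| x - \mathcal{O} |$ places $x$ on the admissible side and satisfies $\partial_r u(x) = -\partial_e u(x)$, whence~\eqref{eq:quasi-radmon}. Throughout, the constants are tracked through the elliptic and Sobolev estimates above and depend only on $n$, $C_0$, $f_0$, $L$, $\| f \|_{C^{0,1}(\R^n)}$, and $\| \kappa \|_{L^\infty(\R^n)}$.
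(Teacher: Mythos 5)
Your proposal correctly identifies the overall architecture — start the moving plane in one direction using the energy identity, continue until a critical position, iterate over directions, find the common center $\mathcal O$, and close with a barrier/Hopf argument for monotonicity. You also set up the energy identity correctly, spot that the absorption works once $\|u^{p-1}\|_{L^{n/2}(\Sigma_\lambda)}$ is small, and recognize that one needs $n$ transversal planes and a triangle inequality to pass from reflections to rotations. That much matches the paper.

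However, the central technical step — what you yourself call ``the main obstacle,'' namely the \emph{continuation} through the intermediate range of $\lambda$ where the tail of $u^{p-1}$ is no longer small — is not actually resolved. You write that it relies on ``quantitative maximum-principle and Hopf-type arguments,'' but a Hopf-type lemma at the critical plane requires a positive lower bound for $u$, and in this setting there is no such bound available: the only a priori information is the decay~\eqref{eq:decadimento} \emph{from above} and the non-triviality of $u$. This is precisely the difficulty the paper flags in the introduction (``since we will not use any pointwise bound from below, we have to deal with energy or mass estimates''), and resolving it occupies the bulk of the paper's argument (Steps~4--6). The mechanism there is entirely different from what you sketch: (i) a universal \emph{mass lower bound} $\int u^{2^*} \ge \mathcal M$ derived from non-triviality via Sobolev, which also bounds $\lambda_\star$ from below; (ii) a contradiction hypothesis $\|(u-u_{\lambda_\star})_-\|_{L^{2^*}} > \mathcal B |\log\delta|^{-\alpha}$, from which a cube $Q_\delta$ with substantial mass of $(u_{\lambda_\star}-u)_+$ is extracted using the decay of $u$; (iii) a \emph{Harnack chain} of length $O(R_0)$ with $R_0 \sim |\log\delta|^{2\alpha/(n-2)}$, which propagates positivity from $Q_\delta$ to a compact $K_\delta$, at an exponential cost $e^{cR_0}$ in the constant; (iv) a decomposition of $\Sigma_{\lambda_\star-\epsilon}$ into a far annulus, the positivity region $K_\delta$, and a thin slab handled by a Poincaré inequality with constant $\sim\delta^2$, which together push the plane slightly further, contradicting minimality. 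The logarithmic rate and in particular the exact exponent $\vartheta = (n-2)/12$ emerge from the precise balancing of the exponential Harnack cost $e^{cR_0}$ against the algebraic gain $|\log\delta|^{-\alpha}$ together with the three successive halvings of the exponent in Steps~5--7; your proposal asserts this value but contains no computation from which it could be derived. Without the mass lower bound and the Harnack-chain propagation of positivity (or a substitute for them), the continuation step, and with it the whole quantitative estimate, does not go through.

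Two smaller points. First, your extraction of $L^\infty$ closeness by ``Gagliardo--Nirenberg interpolation'' against the $C^{1,\alpha}$ bound is a detour: the paper obtains the pointwise bound directly from De Giorgi--Nash--Moser local estimates on $(u-u_\lambda)_+$ (Theorems~8.17 and~8.25 of Gilbarg--Trudinger) applied to the reflected difference, with a fractional power of $\delta$; interpolation would likely cost an extra loss in the exponent. Second, the statement that the critical hyperplanes $H_e$ oscillate continuously in $e$ and hence share a point needs a quantitative mechanism — the paper supplies one via an energy-slab decomposition (comparing the $L^2$-energy of $\nabla u$ in half-spaces and in thin slabs obtained by alternating reflections across $T_{\lambda_\star}$ and $\mathcal O$) rather than via direct continuity of $e\mapsto H_e$, and the result is only a bound $\mathrm{dist}(\mathcal O, H_e) \le C|\log\delta|^{\frac13(1-\frac n2)}$ for each $e$, not a genuine common intersection point.
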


	Note that the non-negativity of~$u$ along with assumption~\eqref{eq:decadimento} imply in particular the validity of the~$L^\infty$ bound~\eqref{eq:C0_def}.

	Theorem~\ref{th:maintheorem-f} provides a quantitative stability result for~\eqref{eq:mainprob-f} with a logarithmic-type dependence on~$\df$, and it quantitatively describes how close~$u$ is to being symmetric. When~$\df \to 0$, this type of dependence is consistent with the symmetry of solutions to~\eqref{eq:mainprob-f} with~$\kappa$ constant. Moreover, when~$\kappa$ is constant also the second term on the right-hand side of~\eqref{eq:quasi-radmon} vanishes, coherently with the radial monotonicity of these solutions.
	
	Theorem~\ref{th:maintheorem-f} also applies to the equation with critical nonlinearity~\eqref{eq:eq_semil_crti}, but in this case it provides weaker results in comparison to those of~\cite{cfm,dsw,fg}. Nevertheless, we stress that the point of view of these works is inherently different from ours, which is focused on approximate radial symmetry. While it is true that proximity to a single Talenti bubble can be deduced from Theorem~\ref{th:maintheorem-f}, the presence of two or more bubbles does not seem to be detectable via an~$L^\infty$-estimate such as~\eqref{eq:mainLinftyest}. Indeed, a sum of two bubbles solves~\eqref{eq:eq_semil_crti} with a function~$\kappa$ having deficit~\eqref{eq:def_cfm} that can be made arbitrarily small provided the centers are taken far apart. This configuration, however, would not be almost radially symmetric -- in the sense of~\eqref{eq:mainLinftyest} -- and, coherently, would fail to satisfy~\eqref{eq:decadimento}.

	We now comment more extensively on assumption~\eqref{eq:decadimento}. In the case of the critical nonlinearity, it is crucial to prevent bubbling when an energy bound analogous to~\eqref{eq:bound-energia} is not imposed. Moreover, since in this case the equation is scaling invariant, condition~\eqref{eq:decadimento} plays the role of fixing a scale and the constant~$C_0$ provides an upper bound for the~$L^\infty$-norm of~$u$. In addition,~\eqref{eq:decadimento} is quite natural if we suppose that~$\kappa \in L^\infty(\R^n)$ and that~\eqref{eq:ip-f-sub} is in force. Indeed, in Theorem 1.1 of~\cite{vet}, V\'etois proved that any solution to~\eqref{eq:mainprob-f} behaves like
	\begin{equation*}
		F(x) \coloneqq \frac{1}{1+\abs{x}^{n-2}},
	\end{equation*}
	up to a multiplicative constant. However, since the equation is invariant under translations, the constant provided therein certainly depends on the function~$u$ -- actually, on explicit quantities related to~$u$, see~\cite[Remark 4.1]{vet}. This suggests also that we cannot aim to prove quasi-symmetry with respect to a fixed point of the space. Instead, the existence of such an approximate center~$\mathcal{O}$ should be a result of the proof itself. Moreover, in view of~\eqref{eq:decadimento}, we expect to show that~$\mathcal{O}$ is close to the origin -- see~\eqref{eq:Oinlambdacube} below -- and is a point of approximate maximum by~\eqref{eq:quasi-radmon}.
	
	This last fact highlights a major difference between our approach and that of~\cite{cicopepo}. First, in the case of a ball considered in~\cite{cicopepo}, there exists a geometrically preferred point -- the origin -- and the authors aimed to reach it while carrying out the moving plane procedure. In addition, one can take advantage of their bound from below on the~$L^\infty$-norm -- see Equation~(1.3) in~\cite{cicopepo} --, the Dirichlet boundary datum and the superharmonicity of the solution to get a pointwise bound from below for the solution.

	In our case, when arguing by contradiction, the absurd should come from the fact that~$u$ is not almost equal to its reflection, in a suitable quantitative sense -- see~\eqref{eq:stima-assurda}. Additionally, since we will not use any pointwise bound from below, we have to deal with energy or mass estimates. Note also that, in our case, such a lower bound would be a simple consequence of the non-triviality of the solution together with the decay estimate~\eqref{eq:decadimento}, without any further assumptions.
	
	Moreover, a relevant difference between the present paper and~\cite{cicopepo} lies in the way we apply the method of moving planes: while in~\cite{cicopepo} the moving planes method is performed in a \emph{pointwise way}, here we opt for an integral approach in the spirit of~\cite{dam-ram,sciu}. After comparing these two approaches, we believe the second one to be simpler for deriving quantitative results and we expect it to be more suitable for a potential future extension of the present work to the~$p$-Laplace operator.

	Finally, we comment on our choice of deficit~\eqref{eq:deficit2}. Its design is clearly based on the deficit~\eqref{eq:ref-const}-\eqref{eq:def_cfm} adopted in~\cite{cfm}. However, it is more flexible, since it does not require the specification of a reference constant~$\kappa_0$. Note that another possibility, more adherent to~\eqref{eq:ref-const}-\eqref{eq:def_cfm}, would be to consider the deficit
	\begin{equation*}
		\tilde{\delta}_f(u, \kappa) \coloneqq \norma*{\left(\kappa-\kappa_0\right)f(u)}_{L^{(2^\ast)'}(\R^n)}, \quad \mbox{with } \kappa_0 \coloneqq \frac{\int_{\R^n} \kappa f(u) u \, dx}{\int_{\R^n} f(u) u \, dx} = \frac{\int_{\R^n} |\nabla u|^2 \, dx}{\int_{\R^n} f(u) u \, dx},
	\end{equation*}
	at least when~$f$ is non-negative -- so that the denominator of the fraction defining~$\kappa_0$ is well-defined. Nevertheless, when comparing this alternative deficit with~\eqref{eq:deficit2}, we observe that the latter is stronger than the former, meaning that~$\delta_f(u, \kappa) \le \tilde{\delta}_f(u, \kappa)$. The deficit~\eqref{eq:deficit2} is also stronger than the pointwise choices of~\cite{cicopepo}, since, by appropriately choosing~$t$ in~\eqref{eq:deficit2}, one sees that
	\begin{equation*}
		\delta_f(u, \kappa) \le \norma*{f(u)}_{L^{(2^\ast)'}(\R^n)} \, \mathrm{osc}_{\R^n} \kappa \, \leq C \,\mathrm{osc}_{\R^n} \kappa,
	\end{equation*}
	thanks to hypotheses~\eqref{eq:ip-f-sub} and~\eqref{eq:decadimento} on~$f$ and~$u$. We point out however that other, more precise deficits were considered in~\cite{cicopepo} -- see, e.g.,~Equation~(1.6) there -- in order to better capture functions~$\kappa$ which are radially symmetric and non-increasing -- hypotheses under which, in the context of~\cite{cicopepo}, the radial symmetry of~$u$ can still be inferred. In the setting of the present paper such choices of deficits would not be fully appropriate, since Theorem~\ref{th:maintheorem-f} aims to understand the quasi-symmetry of the solution with respect to an a priori unspecified center~$\mathcal{O}$. 

	
	\subsection{Quantitative results for non-energy solutions to semilinear equations} As it follows from~\cite{cgs}, any positive solution to~\eqref{eq:intro_critical} with $\kappa_0=1$ is a Talenti bubble and hence, a posteriori, it is an energy solution which belongs to~$\mathcal D^{1,2}(\R^n)$. If we do not consider~$u \in \mathcal D^{1,2}(\R^n)$ as an assumption, then a quantitative analysis of~\eqref{eq:eq_semil_crti} cannot start with an argument like the ones in~\cite{cfm,dsw,fg} and already described in the previous subsection. 

	Moreover, for~$\kappa$ radially symmetric and non-increasing, non-constant, and bounded from below by a positive constant, Ding \& Ni~\cite{dn} proved that there exist non-energy positive solutions of~\eqref{eq:eq_semil_crti} which decay at infinity as~$\abs{x}^{-\frac{n-2}{2}}$. Consequently, any small perturbation of~$\kappa$ in the~$L^\infty$-norm may lead to the appearance of non-energy solutions. This particular example reveals that the condition~\eqref{eq:decadimento} cannot hold in general for classical solutions to~\eqref{eq:eq_semil_crti} and, more generally, for solutions to~\eqref{eq:mainprob-f}.
	
	In light of these remarks, we now consider a non-negative local weak solution~$u \in W^{1,2}_{\loc}(\R^n) \cap L^\infty(\R^n)$ to~\eqref{eq:mainprob-f} and we define the deficit  
	\begin{equation*}
	\label{eq:def-essosc}
		\defi(\kappa) \coloneqq \underset{x\in\R^n}{\mathrm{osc}} \,\kappa(x).
	\end{equation*}
	As before, we assume that~$u$ fulfills the~$L^\infty$-bound~\eqref{eq:C0_def}, for some constant~$C_0 \ge 1$. On top of it, we require~$u$ to satisfy the decay condition 
	\begin{equation}
	\label{eq:u-below}
		\frac{1}{C_0} \frac{\defi(\kappa)^{\sigma}}{\abs*{x}^{n-2-\mu}} \leq u(x) \leq  \frac{C_0}{\defi(\kappa)^{\sigma} \,\abs*{x}^\nu} \quad \mbox{for a.e.~} x \in \R^n \setminus B_{R_0}(0),
	\end{equation}
	for some small constants~$\mu, \nu \in \left(0,\frac{n-2}{2}\right]$,~$\sigma>0$, and some large radius~$R_0 \geq 1$.
	
	Concerning the nonlinearity~$f$, we require it to be subcritical in the sense of~\eqref{eq:ip-f-sub} and that
	\begin{equation}
	\label{eq:ip-f-lip-monot}
		0 \leq \frac{f(u_2) - f(u_1)}{u_2 - u_1}  \le L u_2^{p - 1} \quad \mbox{for every } 0 \leq u_1 < u_2 \leq C_0.
	\end{equation}
	Note, in particular, that this condition implies that~$f$ is non-decreasing. Finally, we need that
	\begin{equation}
	\label{eq:f/up}
		\mbox{the function } u \mapsto \frac{f(u)}{u^{p}} \mbox{ is non-increasing in } (0, +\infty).
	\end{equation}
	This type of assumption can be found, for instance, in~\cite{li-zhang}, where a Liouville-type result for~\eqref{eq:eq_semil_crti} with~$\kappa \equiv 1$ is proven.
	
	In this setting, we have the following result. 

	\begin{theorem}
		\label{th:maintheorem-kelvin}
		Let~$n\geq 3$ be an integer,~$\kappa \in L^\infty(\R^n)$ be a non-negative function, and~$f \in C_\loc^{0,1}([0, +\infty])$ be satisfying~\eqref{eq:ip-f-sub} and~\eqref{eq:ip-f-lip-monot}-\eqref{eq:f/up}. Let~$u \in W^{1,2}_{\loc}(\R^n) \cap L^\infty(\R^n)$ be a non-negative local weak solution to~\eqref{eq:mainprob-f} satisfying~\eqref{eq:C0_def} and~\eqref{eq:u-below}.

		There exist two small constants~$\sigma_0, \vartheta \in (0, 1)$ and a large~$C \ge 1$ such that if~$\sigma \le \sigma_0$ then
		\begin{equation}
			\label{eq:alm_sym_thm1}
			\abs*{u(x)-u(y)} \leq C \defi(\kappa)^\vartheta
		\end{equation}
		for every~$x,y \in \R^n$ with~$\abs*{x}=\abs*{y}$. Moreover, if~$\kappa \in C^{0,\tau}(\R^n)$ for some~$\tau \in (0,1)$ and~$u_\Theta$ denotes any rotation of~$u$ centered at the origin, then there exists a large constant~$C' \ge 1$ such that
		\begin{equation}
			\label{eq:alm_symC2_thm2}
			\norma*{u-u_\Theta}_{C^2(\R^n)} \leq C' \! \left(1+\left[\kappa\right]_{C^{0,\tau}(\R^n)}\right) \defi(\kappa)^{\vartheta'},
		\end{equation}
		with~$\vartheta' = \frac{\vartheta}{2}$. The constant~$C$ depends only on~$n$,~$C_0$,~$L$,~$f_0$,~$\norma*{\kappa}_{L^\infty(\R^n)}$,~$\nu$, and~$R_0$, the exponent~$\vartheta$ depends also on~$\mu$, while~$C'$ depends on~$\tau$ as well.
	\end{theorem}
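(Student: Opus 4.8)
The label of the theorem already hints at the strategy: I would apply the \emph{Kelvin transform} centered at the origin and run a quantitative version of the method of moving planes on the transformed function. This also explains why here the approximate center of symmetry is the origin, in contrast with the a priori unspecified point~$\mathcal O$ of Theorem~\ref{th:maintheorem-f}: the origin is singled out both by the Kelvin transform and by the decay assumption~\eqref{eq:u-below}. Concretely, set~$v(x) \coloneqq |x|^{2-n}\,u(x/|x|^2)$ for~$x \in \R^n \setminus \{0\}$. A direct computation shows that~$v$ is a non-negative local weak solution in~$\R^n \setminus \{0\}$ of~$\Delta v + \tilde\kappa(x)\,\tilde f_x(v) = 0$, where~$\tilde\kappa(x) \coloneqq \kappa(x/|x|^2)$ and~$\tilde f_x(v) \coloneqq |x|^{-(n+2)} f(|x|^{n-2}v) = \phi(|x|^{n-2}v)\,v^p$ with~$\phi(t) \coloneqq f(t)/t^p$. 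Using~$(n-2)p = n+2$ one checks that, uniformly in~$x$,~$\tilde f_x$ retains the subcritical bound~$|\tilde f_x(v)| \le f_0 v^p$ from~\eqref{eq:ip-f-sub}, the monotone Lipschitz structure~$0 \le \partial_v \tilde f_x(v) \le C v^{p-1}$ from~\eqref{eq:ip-f-lip-monot}, and — via~\eqref{eq:f/up} — the fact that~$\phi$ is non-increasing; moreover~$\mathrm{osc}_{\R^n}\tilde\kappa = \defi(\kappa)$. On the other hand, since~$\kappa, f \ge 0$ force~$u$ to be a non-trivial non-negative superharmonic function, the strong maximum principle gives~$u > 0$, hence~$u(0) > 0$ and, together with~\eqref{eq:C0_def}, $c_0\,|x|^{2-n} \le v(x) \le C_0\,|x|^{2-n}$ for~$|x|$ large — a clean inverse-power behavior at infinity — while~\eqref{eq:u-below} transforms into the two-sided bound
\[
	\frac{\defi(\kappa)^\sigma}{C_0}\,|x|^{-\mu} \le v(x) \le \frac{C_0}{\defi(\kappa)^\sigma}\,|x|^{2-n+\nu} \qquad \text{for }0 < |x| < 1/R_0,
\]
with~$\mu, \nu \le \frac{n-2}{2}$ keeping the singularity of~$v$ at the origin inside a workable range.

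The heart of the proof is a quantitative moving planes analysis of~$v$. Fix a direction, say~$e_1$, and for~$\lambda \ge 0$ let~$\Sigma_\lambda \coloneqq \{x_1 > \lambda\}$, let~$x^\lambda$ be the reflection of~$x$ across~$\{x_1 = \lambda\}$, and set~$v_\lambda(x) \coloneqq v(x^\lambda)$, $w_\lambda \coloneqq v_\lambda - v$. Following the integral version of the method in the spirit of~\cite{dam-ram,sciu}, I would test the equation for~$w_\lambda$ on~$\Sigma_\lambda$ with a truncation of~$w_\lambda^-$ by cut-off functions vanishing in a small ball~$B_\rho(0)$ and outside a large ball~$B_{1/\rho}(0)$, and then let~$\rho \to 0$. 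In~$\Sigma_\lambda$ one has~$|x^\lambda| \le |x|$, so the monotonicity of~$\phi$ from~\eqref{eq:f/up} controls, with a favourable sign, the term produced by the~$|x|$-dependence of~$\tilde f_x$ — a term genuinely present for non-power nonlinearities, as in~\cite{li-zhang} — ; the bound~$\partial_v\tilde f_x \le C v^{p-1}$ together with the Sobolev and Hölder inequalities lets one absorb the quadratic term once the relevant region has small measure; and the oscillation of~$\tilde\kappa$ enters only through a forcing term controlled by~$\defi(\kappa)\,\|\tilde f_x(v)\|_{L^{(2^*)'}(\Sigma_\lambda)}$. The two-sided bound near the origin is precisely what makes the boundary terms on~$\partial B_\rho(0)$ negligible as~$\rho \to 0$ (upper bound) while leaving room for a non-trivial estimate (lower bound). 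One concludes that~$w_\lambda^-$ is quantitatively small in energy, uniformly for~$\lambda$ down to~$0$; repeating in the direction~$-e_1$ and combining the two bounds across~$\{x_1 = 0\}$ — and then over all hyperplanes through the origin — gives~$\|v - v\circ\mathcal R\|$ small, for every such reflection~$\mathcal R$.

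It then remains to harvest the two statements. Upgrading the energy bounds of the previous step to~$L^\infty$ bounds by elliptic (Moser-type) estimates — using the clean decay of~$v$ at infinity and its controlled blow-up at~$0$ — and noting that any rotation fixing the sphere~$\{|x| = r\}$ is a product of at most two reflections in hyperplanes through the origin, one obtains~$|v(\xi) - v(\eta)| \le C\defi(\kappa)^\vartheta$ whenever~$|\xi| = |\eta|$; undoing the Kelvin transform, which commutes with rotations about the origin, yields~\eqref{eq:alm_sym_thm1}. For~\eqref{eq:alm_symC2_thm2}, observe that~$w \coloneqq u - u_\Theta$ solves a linear elliptic equation~$\Delta w + \kappa\,c(x)\,w = (\kappa_\Theta - \kappa)\,f(u_\Theta)$, with~$c(x) = (f(u) - f(u_\Theta))/(u - u_\Theta)$ bounded by~\eqref{eq:ip-f-lip-monot} and~\eqref{eq:C0_def} and~$|\kappa_\Theta - \kappa| \le \defi(\kappa)$ pointwise; hence the right-hand side has size~$\defi(\kappa)$ in~$L^\infty$ and size~$1 + [\kappa]_{C^{0,\tau}}$ in~$C^{0,\tau}$, therefore size~$(1 + [\kappa]_{C^{0,\tau}})\,\defi(\kappa)^{1/2}$ in~$C^{0,\tau/2}$ by interpolation. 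Schauder estimates combined with the~$L^\infty$ bound on~$w$ from~\eqref{eq:alm_sym_thm1} then give~$\|w\|_{C^{2,\tau/2}} \lesssim (1 + [\kappa]_{C^{0,\tau}})\,\defi(\kappa)^{\vartheta/2}$, which contains~\eqref{eq:alm_symC2_thm2} with~$\vartheta' = \vartheta/2$.

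I expect the main obstacle to be the middle step: pushing the moving plane \emph{through} the origin, where the Kelvin transform~$v$ is singular. This is exactly where the refined two-sided decay~\eqref{eq:u-below}, with its weights~$\defi(\kappa)^{\pm\sigma}$, the smallness~$\sigma \le \sigma_0$, and the restriction~$\mu, \nu \le \frac{n-2}{2}$, are all used: the upper bound must be strong enough to annihilate the boundary terms near~$0$ and to keep the Sobolev–Hölder absorption uniform, while the lower bound must survive, so as to convert a qualitative symmetry statement into a quantitative estimate; and all constants must remain uniform as~$\defi(\kappa) \to 0$, so that the factors~$\defi(\kappa)^{-\sigma}$ entering the a priori bounds on~$v$ do not spoil the final power~$\vartheta$ — this bookkeeping is what fixes the smallness threshold~$\sigma_0$.
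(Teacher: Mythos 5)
Your high-level framework — Kelvin transform centered at the origin, quantitative moving planes on~$v$, then Schauder to upgrade to~$C^2$ — is indeed the one the paper uses, and several of your structural observations are correct (the transformed nonlinearity~$g(x,v)$ is radially non-increasing in~$x$ via~\eqref{eq:f/up}, the oscillation of the transformed~$\kappa^*$ equals~$\defi(\kappa)$, the two-sided bound~\eqref{eq:u-below} transforms into a controlled blow-up/decay of~$v$ at~$0$). The final Schauder/interpolation step is also in the right spirit, though keeping both terms on the right-hand side of the equation for~$u - u_\Theta$ is cleaner than moving the zero-th order term to the left, since the coefficient~$c(x)$ is only~$L^\infty$ when~$f$ is merely Lipschitz.

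There is, however, a genuine gap in the heart of the argument. You propose to ``test the equation for~$w_\lambda$ on~$\Sigma_\lambda$ with a truncation of~$w_\lambda^-$ by cut-off functions vanishing in a small ball~$B_\rho(0)$'' and to conclude that~$w_\lambda^-$ is small in energy ``uniformly for~$\lambda$ down to~$0$,'' with the two-sided bound~\eqref{eq:u-below} entering to ``annihilate the boundary terms near~$0$'' and ``leave room for a non-trivial estimate.'' This is not what happens. First, the singular point in~$\Sigma_\lambda$ (for~$\lambda > 0$) is the reflected origin~$0^\lambda$, not~$0$, and the paper handles it by a zero-capacity argument (following~\cite{esp-fs}), not via the decay bounds. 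Second, and more importantly, the energy argument alone does \emph{not} push~$\lambda$ all the way to~$0$: the Sobolev–H\"older absorption constant degenerates as the half-space approaches the singular hyperplane, and the oscillation forcing term does not vanish. The paper instead argues by contradiction, assuming~$\lambda_\star > \mathcal{B}\defi(\kappa)^\alpha$, and uses the \emph{lower} bound in~\eqref{eq:u-below} in a sharply different way than you describe: since~$v$ blows up at~$0$ at rate~$\defi(\kappa)^\sigma|x|^{-\mu}$, the reflected function~$v_{\lambda_\star}$ blows up at~$0^{\lambda_\star}$, and as long as~$\lambda_\star$ is not too small (this is where the contradiction hypothesis enters) one gets a pointwise bound~$v_{\lambda_\star} - v \ge 1$ in a small punctured ball around~$0^{\lambda_\star}$ — see~\eqref{eq:posballforvlambda-v}. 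This \emph{inceptive pointwise positivity} is then propagated to a large compact set~$K_\delta$ by chains of weak Harnack inequalities with carefully tracked constants, which in turn kills the contribution of~$K_\delta$ to the slab-Poincar\'e term and lets the moving plane advance, contradicting minimality of~$\lambda_\star$. Without this propagation-of-positivity mechanism you have no way to show that~$(v - v_{\lambda_\star - \varepsilon})_+$ vanishes on a large set, which is what ultimately makes the absorption succeed.

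A secondary gap is the endgame. The moving-plane step only yields~$|v(x) - v_{\omega,0}(x)| \le C\defi(\kappa)^{\alpha_\flat}$ \emph{outside} a shrinking ball~$B_{r_\flat}(0)$ with~$r_\flat \sim \defi(\kappa)^{\alpha_\flat/n}$, because~$\lambda_\star$ is pushed down only to~$O(\defi(\kappa)^\alpha)$, not to~$0$. Transferring back to~$u$ then requires treating three regimes separately: near the origin (via an Alexandrov–Bakelman–Pucci estimate for~$u(x',x_n) - u(x',-x_n)$), the intermediate range (directly from the Kelvin estimate), and~$|x| \ge R_\flat$ (via the right-hand, upper decay bound in~\eqref{eq:u-below}, which is where~$\nu$ and the constraint~$\sigma \le \alpha_\flat\nu/(2n)$ are actually used). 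Your proposal does not account for this three-region patching, and without it the estimate~\eqref{eq:alm_sym_thm1} is not established on all of~$\R^n$.
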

	
	Theorem~\ref{th:maintheorem-kelvin} establishes the quasi-symmetry of \emph{non-energy} solutions of~\eqref{eq:mainprob-f} -- the fact that~$u$ does not belong to~$\mathcal{D}^{1,2}(\R^n)$ follows from the lower bound in~\eqref{eq:u-below} and Theorem~1.1 of~\cite{vet}. For this reason, our result is new even for the perturbed critical equation~\eqref{eq:eq_semil_crti} -- in particular, the estimates of~\cite{cfm,dsw,fg} do not apply.
	
	The proof of Theorem~\ref{th:maintheorem-kelvin} is carried out by considering the Kelvin transform~$v$ of the solution~$u$ and taking advantage of the left-most inequality in~\eqref{eq:u-below} to deduce that~$v$ blows up at the origin and, as a result, the positivity of an appropriate function in a small region -- see~\eqref{eq:posballforvlambda-v}. From there on, positivity is then expanded by means of standard tools, such as the weak Harnack inequality. Note that the use of a pointwise lower bound to obtain the inceptive positivity makes our proof more in the spirit of Theorem~1.2 in~\cite{cicopepo} and, especially, Theorem~1.1 in~\cite{OSV20}. We also point out that a drawback of this method is that no almost monotonicity result seems to be  obtainable from it, due to the use of the Kelvin transform.

	We now comment more thoroughly on hypotheses~\eqref{eq:C0_def} and~\eqref{eq:u-below}. The quantification of the boundedness of~$u$ given by~$C_0$ in~\eqref{eq:C0_def} is needed, as for Theorem~\ref{th:maintheorem-f}, to fix a scale for the solution~$u$ -- notice that, for some choices of~$f$ and~$\kappa$, equation~\eqref{eq:mainprob-f} may possess some scaling-invariance features. In regards to~\eqref{eq:u-below}, as observed earlier its left-most inequality gives a quantification of the fact that~$u$ does not belong to the energy space~$\mathcal{D}^{1,2}(\R^n)$, whereas its right-most inequality is a technical decay assumption needed only to complete the final step of the proof. Note also that~\eqref{eq:u-below} fixes, in a certain sense, a center of approximate symmetry: as a result, statements~\eqref{eq:alm_sym_thm1} and~\eqref{eq:alm_symC2_thm2} provide symmetry with respect to the origin. Finally, we stress that the presence of~$\defi(\kappa)$ in~\eqref{eq:u-below} makes the inequalities less and less significant as~$\defi(\kappa) \rightarrow 0$, coherently with the exact symmetry results of~\cite{cgs,cl} for~$\defi(\kappa) = 0$.
	
	We conclude by mentioning that it should be possible to remove the monotonicity assumption expressed by the left-most inequality of~\eqref{eq:ip-f-lip-monot}. However, this would likely come at the price of having a worse, logarithmic-type dependence on~$\defi(\kappa)$ in estimates~\eqref{eq:alm_sym_thm1} and~\eqref{eq:alm_symC2_thm2} -- see Remark~\ref{rem:monot-f} at the end of Section~\ref{sec:proof3} for more details on this.
	
	
	\section{Proof of Theorem~\ref{th:maintheorem-f}}
	\label{sec:proof1}
	
	\renewcommand\thesubsection{\bfseries Step \arabic{subsection}}
	
	\noindent
	We first show that any non-trivial solution to~\eqref{eq:mainprob-f} enjoys a quantitative lower bound for the mass. This is the content of the following result, which is essentially Lemma~2.3 in~\cite{vet}.
	
	\begin{lemma}
		Suppose that~$n \geq 3$ and~$u \in \mathcal{D}^{1,2}(\R^n)$ is a non-negative, non-trivial weak solution to~\eqref{eq:mainprob-f} with~$f:[0,+\infty) \to \R$ satisfying~\eqref{eq:ip-f-sub} and~$\kappa \in L^{\infty}(\R^n)$. Then, there exists a constant $\mathcal{M}>0$, depending only on~$n$,~$f_0$, and~$\norma{\kappa}_{L^\infty(\R^n)}$, such that
		\begin{equation}
			\label{eq:boundmassa}
			\int_{\R^n} u^{2^\ast} dx \geq \mathcal{M}.
		\end{equation}
	\end{lemma}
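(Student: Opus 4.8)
The plan is to test the equation against the solution itself and then combine the resulting energy identity with the subcriticality bound~\eqref{eq:ip-f-sub} and the Sobolev inequality~\eqref{eq:Sob_quot}. This is the classical way of extracting a lower bound on the mass, and here it will even give an explicit value of~$\mathcal{M}$.

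Since~$u \in \mathcal{D}^{1,2}(\R^n)$ and the weak formulation of~\eqref{eq:mainprob-f} holds for every test function in~$\mathcal{D}^{1,2}(\R^n)$, I would first take~$\phi = u$. This choice is legitimate because, by~\eqref{eq:ip-f-sub} and~$\kappa \in L^\infty(\R^n)$, one has~$\abs{\kappa f(u)\,u} \le f_0 \norma*{\kappa}_{L^\infty(\R^n)}\, u^{p+1} = f_0 \norma*{\kappa}_{L^\infty(\R^n)}\, u^{2^\ast} \in L^1(\R^n)$, so all the integrals involved are finite. Testing thus yields the identity
\[
	\int_{\R^n} \abs{\nabla u}^2 \, dx = \int_{\R^n} \kappa(x) f(u)\, u \, dx \le f_0 \norma*{\kappa}_{L^\infty(\R^n)} \int_{\R^n} u^{2^\ast} \, dx .
\]
Next I would invoke the Sobolev inequality coming from~\eqref{eq:Sob_quot} in the form~$\norma*{u}_{L^{2^\ast}(\R^n)}^2 \le S^{-2} \norma*{\nabla u}_{L^2(\R^n)}^2$, and set~$m \coloneqq \int_{\R^n} u^{2^\ast}\,dx$, which is strictly positive because~$u \ge 0$ is non-trivial and belongs to~$L^{2^\ast}(\R^n)$. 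Combining the two estimates gives~$m^{2/2^\ast} \le S^{-2} f_0 \norma*{\kappa}_{L^\infty(\R^n)}\, m$, that is,~$m^{\,1 - 2/2^\ast} \ge S^2 \bigl( f_0 \norma*{\kappa}_{L^\infty(\R^n)} \bigr)^{-1}$, where~$1 - 2/2^\ast > 0$ since~$2^\ast > 2$. Raising both sides to the power~$(1 - 2/2^\ast)^{-1} = n/2$ and recalling that~$S$ depends only on~$n$, I obtain~\eqref{eq:boundmassa} with the explicit constant
\[
	\mathcal{M} = \left( \frac{S^2}{f_0 \norma*{\kappa}_{L^\infty(\R^n)}} \right)^{\!\!n/2},
\]
which indeed depends only on~$n$,~$f_0$, and~$\norma*{\kappa}_{L^\infty(\R^n)}$.

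There is essentially no genuine obstacle in this argument: the only points demanding (minor) care are the admissibility of the test function~$\phi = u$ — guaranteed by the integrability noted above — and the positivity of~$m$, which is immediate from the non-triviality and non-negativity of~$u$. I also note that the proof uses only the subcritical growth~\eqref{eq:ip-f-sub} and not the Lipschitz-type condition~\eqref{eq:ip-f-tipolip}, consistently with the hypotheses of the statement.
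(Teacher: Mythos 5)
Your argument is correct and coincides with the paper's proof: both test the equation against $u$, combine the resulting energy identity with the subcritical growth bound~\eqref{eq:ip-f-sub} and the Sobolev inequality~\eqref{eq:Sob_quot}, and conclude by dividing out a positive power of the mass thanks to $2^\ast>2$. The only addition is that you make the constant $\mathcal{M}=\bigl(S^2/(f_0\norma*{\kappa}_{L^\infty(\R^n)})\bigr)^{n/2}$ explicit, which the paper leaves implicit.
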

	\begin{proof}
		Since~$u$ is non-trivial, by Sobolev inequality, testing~\eqref{eq:mainprob-f} with~$u$, and hypothesis~\eqref{eq:ip-f-sub}, we get
		\begin{equation*}
			\begin{split}
				0 &< \left(\int_{\R^n} u^{2^\ast} dx\right)^{\!\!\frac{2}{2^\ast}} \leq S^{-2} \int_{\R^n} \,\abs*{\nabla u}^2 \, dx = S^{-2}
				\int_{\R^n} \kappa f(u) u \, dx \\
				&\leq S^{-2} f_0 \norma{\kappa}_{L^\infty(\R^n)} \int_{\R^n} u^{2^\ast} dx.
			\end{split}
		\end{equation*}
		The conclusion easily follows since~$2^\ast>2$.
	\end{proof}

	In the remainder of this section, we will refer to a constant as \textit{universal} if it depends only on~$n$,~$C_0$,~$L$,~$f_0$,~$\norma*{\kappa}_{L^\infty(\R^n)}$, and~$\norma*{f}_{C^{0,1}\left([0,C_0]\right)}$.

	We are now ready to prove our first main result.
	
	
	\subsection{Preliminary observations.} \label{step:prel-obs}
	
	First of all, standard elliptic regularity yields that~$u$ is of class~$C^{1,\tau}_{\loc}$ for every~$\tau \in (0,1)$ and, exploiting also assumption~\eqref{eq:decadimento} on the decay of~$u$ and~\eqref{eq:ip-f-sub}, we can deduce a
	natural bound for the decay of its gradient, that is
	\begin{equation} \label{eq:decadgrad}
		\abs*{\nabla u(x)} \leq \frac{C_1}{1 + |x|^{n - 1}} \quad \mbox{for all } x \in \R^n,
	\end{equation}
	for some constant~$C_1 \ge 1$ depending only on~$n$,~$C_0$,~$f_0$, and~$\norma*{\kappa}_{L^\infty(\R^n)}$. In particular, from this and~\eqref{eq:decadimento} it follows that
	\begin{equation} \label{eq:C1boundonu}
		\norma*{u}_{C^1(\R^n)} \le C_1,
	\end{equation}
	up to taking a larger~$C_1$.
	
	Secondly, we observe that it suffices to prove Theorem~\ref{th:maintheorem-f} when~$\df$ is smaller than a universal constant~$\gamma \in \left( 0, \frac{1}{2} \right]$. Indeed, when~$\df > \gamma$, by~\eqref{eq:decadgrad} and~\eqref{eq:C1boundonu}
	\begin{equation*}
		\abs*{u(x)-u(y)} \leq \abs*{u(x)} + \abs*{u(y)} \leq 2C_1 \leq \frac{2C_1}{\abs{\log\gamma}^{-\vartheta}} \,\abs{\log\df}^{-\vartheta}
	\end{equation*}
	for every~$x,y \in \R^n$, 
	\begin{equation*}
		\norma*{u-u_\Theta}^2_{\mathcal{D}^{1,2}(\R^n)} 
		\leq 4C_1^2 \int_{\R^n} \frac{dx}{\left(1 + |x|^{n-1}\right)^2} \leq \frac{4C_1^2C_n}{\abs{\log\gamma}^{-\vartheta}} \,\abs{\log\df}^{-\vartheta},
	\end{equation*}
	and
	\begin{equation*}\partial_r u(x) \leq \abs*{\nabla u (x)} \leq C_1 
		\leq \frac{C_1}{\abs{\log\gamma}^{-\vartheta}} \left(\abs*{\log\df}^{-\vartheta}  + \norma*{\nabla\kappa}_{L^\infty(\R^n)}\right) \!.
	\end{equation*}
	Therefore, all the inequalities stated in Theorem~\ref{th:maintheorem-f} are trivial and in what follows we will assume that
	\begin{equation}
	\label{eq:def-piccolo}
		\df \le \gamma,
	\end{equation}
	for some small~$\gamma \in \left( 0, \frac{1}{2} \right]$ to be later determined in dependence of universal quantities.

	Lastly, we point out that
	\begin{equation*}
		\df = \inf_{t \in [ \inf_{\R^n} \kappa, \, \sup_{\R^n} \kappa]} \, \norma*{\left(\kappa-t\right)f(u)}_{L^{(2^\ast)'}(\R^n)},
	\end{equation*}
	and that there actually exists~$\kappa_1 \in [ \inf_{\R^n} \kappa, \, \sup_{\R^n} \kappa]$ such that
	\begin{equation} \label{eq:dfattainbykappa1}
		\df = \norma*{\left(\kappa-\kappa_1\right)f(u)}_{L^{(2^\ast)'}(\R^n)}.
	\end{equation}
	To verify the first claim it suffices to notice that, if~$t < \inf_{\R^n} \kappa$, then
	\begin{align*}
		& \norma*{\left(\kappa-t\right)f(u)}_{L^{(2^\ast)'}(\R^n)}^{(2^\ast)'} = \int_{\R^n} (\kappa - t)^{(2^\ast)'} \, |f(u)|^{(2^\ast)'} dx \\
		& \hspace{50pt} \ge \int_{\R^n} (\kappa - \inf_{\R^n} \kappa)^{(2^\ast)'} \, |f(u)|^{(2^\ast)'} dx = \norma*{\left(\kappa - \inf_{\R^n} \kappa \right)f(u)}_{L^{(2^\ast)'}(\R^n)}^{(2^\ast)'},
	\end{align*}
	and similarly for~$t > \sup_{\R^n} \kappa$. The second claim follows from the continuity of the map~$[ \inf_{\R^n} \kappa, \, \sup_{\R^n} \kappa] \ni t \mapsto \norma*{\left(\kappa-t\right)f(u)}_{L^{(2^\ast)'}(\R^n)}$ and Weierstrass theorem.
	
	In the remainder of the proof we establish the validity of the quasi-symmetry statements of Theorem~\ref{th:maintheorem-f}. 
	To this aim, given a direction~$\omega \in \sfera^{n - 1}$ and~$\lambda \in \R$, we define the sets
	\begin{equation*}
		\Sigma_{\omega, \lambda} \coloneqq \Big\{ {x \in \R^n \, \big\lvert \, \omega \cdot x > \lambda} \Big\}, \quad
		T_{\omega, \lambda} \coloneqq \partial \Sigma_{\omega, \lambda} = \Big\{ {x \in \R^n \, \big\lvert \, \omega \cdot x = \lambda} \Big\},
	\end{equation*}
	and we indicate the reflection of a point~$x \in \R^n$ across the hyperplane~$T_{\omega, \lambda}$ by
	\begin{equation*}
		x^{\omega, \lambda} \coloneqq x + 2 (\lambda - \omega \cdot x) \omega.
	\end{equation*}
	Finally, we write
	\begin{equation*}
		u_{\omega, \lambda}(x) \coloneqq u(x^{\omega, \lambda}) \quad \mbox{for } x \in \R^n.
	\end{equation*}
	Our goal in Steps~2-6 is to show that corresponding to every~$\omega \in \sfera^{n - 1}$ there is a number~$\lambda_\star = \lambda_\star(\omega) \in \R$ for which
	\begin{equation} \label{eq:mainMPclaim}
		\begin{gathered}
			u(x) - u_{\omega, \lambda}(x) \le C_\sharp \abs*{\log \df}^{1 - \frac{n}{2}} \quad \mbox{for every } x \in \Sigma_{\omega, \lambda} \mbox{ and } \lambda \ge \lambda_\star, \\
			\norma*{u - u_{\omega, \lambda_\star}}_{L^\infty(\R^n)} + \norma*{\nabla (u - u_{\omega, \lambda_\star})}_{L^2(\R^n)} \le C_\sharp \abs*{\log \df}^{1 - \frac{n}{2}},
		\end{gathered}
	\end{equation}
	for some universal constant~$C_\sharp \ge 1$ and provided~$\gamma$ is smaller than a universal constant~$\gamma_{\sharp} \in \left( 0, \frac{1}{2} \right]$.
	
	
	\subsection{Starting the moving planes procedure.} \label{step:MPstart}
	
	Of course, after a rotation it suffices to verify~\eqref{eq:mainMPclaim} for~$\omega = e_n$. Under this assumption, we drop any reference to the dependence on~$\omega$ and simply write
	\begin{gather*}
		\Sigma_\lambda \coloneqq \Big\{ {x \in \R^n \, \big\lvert \, x_n > \lambda} \Big\}, \quad
		T_\lambda \coloneqq \Big\{ {x \in \R^n \, \big\lvert \, x_n = \lambda} \Big\}, \\
		x^\lambda \coloneqq (x', 2 \lambda - x_n), \quad \mbox{and} \quad u_\lambda(x) \coloneqq u(x^\lambda),
	\end{gather*}
	for every~$x = (x', x_n) \in \R^n$.
	
	We claim here that
	\begin{equation} \label{eq:MPstarts}
		\norma*{(u - u_\lambda)_+}_{L^{2^\ast}\!(\Sigma_\lambda)} \le 4 S^{-2} \,\df \quad \mbox{for all } \lambda \ge \lambda_1,
	\end{equation}
	for a universal constant~$\lambda_1 > 0$ and with~$S>0$ given by~\eqref{eq:Sob_quot}.
	
	In order to verify~\eqref{eq:MPstarts}, note that, without loss of generality, we may assume
	\begin{equation} \label{eq:L2gradnormnot0}
		\norma*{\nabla (u - u_\lambda)_+}_{L^2(\Sigma_\lambda)} > 0.
	\end{equation}
	Otherwise, we would have that~$(u - u_\lambda)_+ = 0$ in~$\Sigma_\lambda$ and, consequently, that~\eqref{eq:MPstarts} holds true.
	
	Assuming~\eqref{eq:L2gradnormnot0}, we test equation~\eqref{eq:mainprob-f} against the function $\left(u-u_\lambda\right)_+ \chi_{\Sigma_\lambda}$. This can be rigorously justified by means of a simple approximation argument. By doing so, we obtain
	\begin{equation*}
		\begin{split}
			\int_{\Sigma_\lambda} \nabla u \cdot \nabla (u-u_\lambda)_+ \, dx &= \int_{\Sigma_\lambda} \kappa f(u) (u-u_\lambda)_+ \, dx \\
			&= \int_{\Sigma_\lambda} (\kappa-\kappa_1) f(u) (u-u_\lambda)_+ \, dx + \kappa_1 \int_{\Sigma_\lambda} f(u) (u-u_\lambda)_+ \, dx,
		\end{split}
	\end{equation*}
	with~$\kappa_1 \in \left[ 0, \| \kappa \|_{L^\infty(\R^n)} \right]$ being the constant found in~\ref{step:prel-obs} -- see~\eqref{eq:dfattainbykappa1} in particular. Since $u_\lambda$ satisfies the same equation as~$u$ with~$\kappa$ replaced by~$\kappa_\lambda$, we also get
	\begin{equation*}
		\int_{\Sigma_\lambda} \nabla u_\lambda \cdot \nabla (u-u_\lambda)_+ \, dx = \int_{\Sigma_\lambda} (\kappa_\lambda-\kappa_1) f(u_\lambda) (u-u_\lambda)_+ \, dx + \kappa_1 \int_{\Sigma_\lambda} f(u_\lambda) (u-u_\lambda)_+ \, dx.
	\end{equation*}
	Subtracting these two identities we deduce
	\begin{equation}
		\label{eq:test-finale}
		\begin{split}
			&\int_{\Sigma_\lambda} \,\abs*{\nabla (u-u_\lambda)_+}^2 \, dx = \int_{\Sigma_\lambda} (\kappa-\kappa_1) f(u) (u-u_\lambda)_+ \, dx \\
			&\hspace{15pt} \quad - \int_{\Sigma_\lambda} (\kappa_\lambda-\kappa_1) f(u_\lambda) (u-u_\lambda)_+  \, dx + \kappa_1 \int_{\Sigma_\lambda} \left(f(u)-f(u_\lambda)\right) (u-u_\lambda)_+ \, dx.
		\end{split}
	\end{equation}
	We now estimate the terms on the right-hand side of~\eqref{eq:test-finale}. For the first one, we use H\"older and Sobolev inequalities
	\begin{equation*}
		\begin{split}
			\int_{\Sigma_\lambda} (\kappa-\kappa_1) f(u) (u-u_\lambda)_+ \, dx &\leq \norma*{(\kappa-\kappa_1) f(u)}_{L^{(2^\ast)'}(\Sigma_\lambda)} \,\norma*{(u-u_\lambda)_+}_{L^{2^\ast}\!(\Sigma_\lambda)} \\
			&\leq S^{-1} \,\df \norma*{\nabla (u-u_\lambda)_+}_{L^2(\Sigma_\lambda)}.
		\end{split}
	\end{equation*}
	The second term can be handled in a completely analogous way. Finally, to deal with the last summand we exploit~\eqref{eq:ip-f-tipolip} and apply, again, H\"older and Sobolev inequalities to get
	\begin{equation*}
		\begin{split}
			\kappa_1 \int_{\Sigma_\lambda} \left(f(u)-f(u_\lambda)\right) (u-u_\lambda)_+ \, dx & \leq L \kappa_1 \int_{\Sigma_\lambda} u^{p-1} (u-u_\lambda)_+^2 \, dx \\
			&\leq L \kappa_1 \left(\int_{\Sigma_\lambda} u^{2^\ast} dx \right)^{\!\! \frac{2^\ast-2}{2^\ast}} \left(\int_{\Sigma_\lambda} (u-u_\lambda)_+^{2^\ast} dx \right)^{\!\! \frac{2}{2^\ast}} \\
			&\leq L \kappa_1 S^{-2} \norma*{u}_{L^{2^\ast}\!(\Sigma_\lambda)}^{2^\ast-2} \int_{\Sigma_\lambda} \,\abs*{\nabla (u-u_\lambda)_+}^{2} \, dx.
		\end{split}
	\end{equation*}
	Going back to~\eqref{eq:test-finale}, we deduce that
	\begin{equation} \label{eq:est-1-def}
		\left( 1- \frac{L \kappa_1}{S^2} \,\norma*{u}_{L^{2^\ast}\!(\Sigma_\lambda)}^{2^\ast-2} \right) \norma*{\nabla (u-u_\lambda)_+}^2_{L^2(\Sigma_\lambda)} \leq 2 S^{-1} \,\df \norma*{\nabla (u-u_\lambda)_+ }_{L^2(\Sigma_\lambda)}.
	\end{equation}
	
	We now show that~\eqref{eq:est-1-def} leads to~\eqref{eq:MPstarts} up to taking~$\lambda$ large enough. Recalling~\eqref{eq:decadimento}, we see that
	\begin{equation} \label{eq:tailubound}
		\norma*{u}_{L^{2^\ast}\!(\Sigma_\lambda)}^{2^\ast} \le C_0^{2^\ast} \int_{\R^n \setminus B_\lambda(0)} \frac{dx}{\abs{x}^{2n}} = \frac{C_0^{2^\ast} \Haus^{n - 1}(\sfera^{n - 1})}{n \lambda^n}.
	\end{equation}
	Hence, as~$\kappa_1 \le \| \kappa \|_{L^\infty(\R^n)}$,
	\begin{equation*}
		1- \frac{L \kappa_1}{S^2} \,\norma*{u}_{L^{2^\ast}\!(\Sigma_\lambda)}^{2^\ast-2} \ge 1 - \frac{L \| \kappa \|_{L^\infty(\R^n)} \Haus^{n - 1}(\sfera^{n - 1})^{\frac{2}{n}} C_0^{\frac{4}{n - 2}}}{n^{\frac{2}{n}} S^2 \lambda^2} \ge \frac{1}{2},
	\end{equation*}
	provided~$\lambda \geq \lambda_1$ with
	\begin{equation*} \label{eq:lambda1def}
		\lambda_1 \coloneqq \frac{\sqrt{2 L \| \kappa \|_{L^\infty(\R^n)}}}{S} \left( \frac{\Haus^{n - 1}(\sfera^{n - 1}) C_0^{2^\ast}}{n} \right)^{\!\! \frac{1}{n}} \!.
	\end{equation*}
	By combining this with~\eqref{eq:est-1-def} and Sobolev inequality, we immediately deduce that
	\begin{equation} \label{eq:stima2ast}
		\norma*{(u - u_\lambda)_+} _{L^{2^\ast} \! (\Sigma_\lambda)} \leq 4 S^{-2} \,\df,
	\end{equation}
	for every~$\lambda \ge \lambda_1$ -- recall that assumption~\eqref{eq:L2gradnormnot0} is in force. Hence, claim~\eqref{eq:MPstarts} holds true.
	
	Estimate~\eqref{eq:MPstarts} yields that the set
	\begin{equation*}
		\Lambda \coloneqq \Big\{ \lambda \in \R \, \big\lvert \, \norma*{ (u-u_\mu)_+}_{L^{2^\ast} \! (\Sigma_\mu)} \leq 4 S^{-2} \,\df \,\, \mbox{ for every } \mu \geq \lambda \Big\}
	\end{equation*}
	is non-empty. Thus,
	\begin{equation*}
		\lambda_\star \coloneqq \inf\Lambda
	\end{equation*}
	is well-defined and~$\lambda_\star \in [-\infty,\lambda_1]$. We conclude this step of the proof by showing that~$\lambda_\star$ is actually a real number universally bounded from below, at least for~$\gamma$ sufficiently small. To see this, let~$\lambda \in \Lambda \cap (-\infty, -1]$. By the triangular inequality,
	\begin{equation*}
		4 S^{-2} \,\df \ge \norma*{u - u_\lambda}_{L^{2^\ast}\!(\Sigma_\lambda \cap \{ u > u_\lambda \})} \geq \norma*{u}_{L^{2^\ast}\!(\Sigma_\lambda \cap \{ u > u_\lambda \})} - \norma*{u_\lambda}_{L^{2^\ast}\!(\Sigma_\lambda \cap \{ u > u_\lambda \})}.
	\end{equation*}
	Taking advantage of the estimate~\eqref{eq:boundmassa}, we then have
	\begin{align*}
		\mathcal{M} & \leq  \norma*{u}_{L^{2^\ast}\!(\R^n)}^{2^\ast} = \norma*{u}_{L^{2^\ast}\!(\Sigma_\lambda \cap \{ u > u_\lambda \})}^{2^\ast} + \norma*{u}_{L^{2^\ast}\!(\Sigma_\lambda \cap \{ u \le u_\lambda \})}^{2^\ast} + \norma*{u}_{L^{2^\ast}\!(\R^n \setminus \Sigma_\lambda)}^{2^\ast} \\
		& \le \left( \norma*{u_\lambda}_{L^{2^\ast}\!(\Sigma_\lambda \cap \{ u > u_\lambda \})} + 4 S^{-2} \,\df  \right)^{\! 2^\ast} + \norma*{u_\lambda}_{L^{2^\ast}\!(\Sigma_\lambda \cap \{ u \le u_\lambda \})}^{2^\ast} + \norma*{u}_{L^{2^\ast}\!(\R^n \setminus \Sigma_\lambda)}^{2^\ast} \\
		& \le 2^{2^\ast} \norma*{u_\lambda}_{L^{2^\ast}\!(\Sigma_\lambda)}^{2^\ast} + 8^{2^\ast} S^{- 2 \cdot 2^\ast} \df^{2^\ast} + \norma*{u}_{L^{2^\ast}\!(\R^n \setminus \Sigma_\lambda)}^{2^\ast}.
	\end{align*}
	A change of variables gives that
	\begin{equation*}
		\norma*{u_\lambda}_{L^{2^\ast}\!(\Sigma_\lambda)} = \norma*{u}_{L^{2^\ast}\!(\R^n \setminus \Sigma_\lambda)}.
	\end{equation*}
	Hence, using the decay hypothesis~\eqref{eq:decadimento}, computing as in~\eqref{eq:tailubound}, and taking advantage of~\eqref{eq:def-piccolo}, we find
	\begin{equation*}
		\mathcal{M} \le 4^{2^\ast} \norma*{u}_{L^{2^\ast}\!(\R^n \setminus \Sigma_\lambda)}^{2^\ast} + 8^{2^\ast} S^{- 2 \cdot 2^\ast} \df^{2^\ast} \le 4^{2^\ast} \frac{C_0^{2^\ast} \Haus^{n - 1}(\sfera^{n - 1})}{n (- \lambda)^n} + 8^{2^\ast} S^{- 2 \cdot 2^\ast} \gamma^{2^\ast}.
	\end{equation*}
	By taking
	\begin{equation*}
		\gamma \le \gamma_1 \coloneqq \frac{S^{2}}{8}\left(\frac{\mathcal{M}}{2}\right)^{\!\frac{1}{2^\ast}} \!,
	\end{equation*}
	we conclude that~$\lambda_\star \ge - \lambda_2$, with
	\begin{equation*}
		\lambda_2 \coloneqq \max \left\{ \left( 4 C_0 \right)^{\! \frac{2^\ast}{n}} \left( \frac{2f_0 \Haus^{n - 1}(\sfera^{n - 1})}{n \mathcal{M}} \right)^{\!\! \frac{1}{n}}, 1 \right\},
	\end{equation*}
	as claimed. Combining this with~\eqref{eq:MPstarts}, we find in particular that
	\begin{equation} \label{eq:limit-lstar}
		\lambda_\star \in [-\lambda_0, \lambda_0],
	\end{equation}
	with~$\lambda_0 \coloneqq \max \{ \lambda_1, \lambda_2 \}$.
	
	
	\subsection{From integral to pointwise estimates.} \label{step:int2point}
	
	Now, we show that, as long as~\eqref{eq:stima2ast} holds true, we can obtain a pointwise bound on $\left(u-u_{\lambda}\right)_+$.
	
	We know that~\eqref{eq:stima2ast} holds for any~$\lambda > \lambda_\star$,~by definition of~$\Lambda$. Fatou's lemma ensures that it also holds for~$\lambda = \lambda_\star$, that is
	\begin{equation} \label{eq:stimaperlambdastar}
		\norma*{(u - u_{\lambda_\star})_+}_{L^{2^\ast} \! (\Sigma_{\lambda_\star})} \leq 4 S^{-2} \,\df.
	\end{equation}
	Let~$\lambda \geq \lambda_\star$ be fixed and observe that
	\begin{equation*}
		\begin{split}
			\Delta (u-u_{\lambda}) &= \kappa_{\lambda} f(u_{\lambda}) - \kappa f(u) \\
			&= (\kappa_{\lambda}-\kappa_1) f(u_{\lambda}) - (\kappa-\kappa_1) f(u) - \kappa_1 (f(u)-f(u_{\lambda})) \quad \mbox{in } \Sigma_\lambda,
		\end{split}
	\end{equation*}
	that is
	\begin{equation*} \label{eq:eq-differenza}
		\Delta (u-u_{\lambda}) + \kappa_1 \, c_{\lambda} (u-u_{\lambda}) = g_\lambda \quad \mbox{in } \Sigma_\lambda,
	\end{equation*}
	with
	\begin{equation} \label{eq:clambdadef}
		c_{\lambda} (x) \coloneqq
		\begin{dcases}
			\frac{f(u(x))-f(u(x^{\lambda}))}{u(x)-u(x^{\lambda})}	& \quad \mbox{if } u(x) \neq u(x^{\lambda}), \\
			0													& \quad \mbox{if } u(x) = u(x^{\lambda}),
		\end{dcases}
	\end{equation}
	and
	\begin{equation} \label{eq:glambdadef}
		g_\lambda \coloneqq \left(\kappa_{\lambda}-\kappa_1\right) f(u_{\lambda}) - \left(\kappa-\kappa_1\right) f(u).
	\end{equation}
	Note that~$c_{\lambda} \in L^\infty(\R^n)$ with~$\abs*{c_{\lambda}}  \leq \norma*{f}_{C^{0,1}\left([0,C_0]\right)}$, and~$g_\lambda \in L^\infty(\R^n) \cap L^{\left(2^\ast\right)'}(\R^n)$. Hence, we are in a position to apply Theorem~8.17 in~\cite{gt} and get that, for~$B_2(y) \subseteq \Sigma_{\lambda}$ and~$q>n/2$, there holds
	\begin{equation}
		\label{eq:stima-puntuale-lontano}
		\begin{split}
			\sup_{B_1(y)} (u-u_{\lambda}) & \le C \Big( {\norma*{(u-u_{\lambda})_+}_{L^{2^\ast} \! \left(B_2(y)\right)} +\norma*{g_\lambda}_{L^q(B_2(y))}} \Big) \\
			& \le C \Big( {\df +\norma*{g_\lambda}_{L^q(\Sigma_\lambda)}} \Big),
		\end{split}
	\end{equation}
	for some constant~$C > 0$ depending only on~$n$,~$\norma*{f}_{C^{0,1}\left([0,C_0]\right)}$,~$\norma*{\kappa}_{L^\infty(\R^n)}$, and~$q$. To proceed further, we need an estimate for~$\norma*{g_\lambda}_{L^q(\Sigma_\lambda)}$. Note that~$q>\left(2^\ast\right)'$ for any~$n\geq 3$. Thus,~\eqref{eq:ip-f-sub} and H\"older inequality yield
	\begin{equation} \label{eq:glambdaLinftyest}
		\| g_\lambda \|_{L^q(\Sigma_\lambda)} \le 2 \norma*{(\kappa-\kappa_1) f(u)}_{L^q(\R^n)} \le 2 f_0 \norma*{(\kappa-\kappa_1) u^p}_{L^\infty(\R^n)}^{1-\left(2^\ast\right)'/q} \,\df^{\left(2^\ast\right)'/q}.
	\end{equation}
	With this estimate at hand,~\eqref{eq:stima-puntuale-lontano} becomes
	\begin{equation}
		\label{eq:stima-puntuale-lontano-1}
		\sup_{B_1(y)} \left(u-u_{\lambda}\right) \le C \,\df^{\left(2^\ast\right)'\slash q} \quad \mbox{for every } y \in \Sigma_\lambda \mbox{ with } y_n > \lambda + 2,
	\end{equation}
	where~$C$ may now depend on~$C_0$ and~$f_0$ as well. This provides a uniform $L^\infty$-bound away from the hyperplane~$T_{\lambda}$.
	
	In order to get the bound in a neighborhood of~$T_{\lambda}$, we exploit the Dirichlet boundary datum. Since~$u-u_{\lambda} = 0$ on~$T_{\lambda}$, we can apply Theorem~8.25 in~\cite{gt} and obtain, arguing as above,
	\begin{equation}
		\label{eq:stima-puntuale-vicino}
		\sup_{B_1^+(y)} (u-u_{\lambda})_+ \le C \,\df^{\left(2^\ast\right)' /  q} \quad \mbox{for every } y \in T_\lambda,
	\end{equation}
	where~$B_1^+(y) \coloneqq B_1(y) \cap \Sigma_{\lambda}$.
	
	By setting, e.g.,~$q=n$ in~\eqref{eq:stima-puntuale-lontano-1} and~\eqref{eq:stima-puntuale-vicino}, we deduce
	\begin{equation}
		\label{eq:stima-puntuale}
		\| (u-u_{\lambda})_+ \|_{L^\infty(\Sigma_\lambda)} \leq C_2 \,\df^\frac{2}{n+2} \quad \text{for every } \lambda \ge \lambda_\star,
	\end{equation}
	for some constant~$C_2 \geq 1$ depending only on~$n$,~$C_0$,~$f_0$,~$\norma*{\kappa}_{L^\infty(\R^n)}$, and~$\norma*{f}_{C^{0,1}\left([0,C_0]\right)}$.
	
	
	\subsection{Finding mass in a small region.} \label{step:massa}
	
	We shall show that
	\begin{equation} \label{eq:u-ulambdastar-small}
		\norma*{(u-u_{\lambda_\star})_-}_{L^{2^\ast}\!(\Sigma_{\lambda_\star})} \le C \, \abs{\log\df}^{-\alpha},
	\end{equation}
	for some universal constants~$C \ge 1$ and~$\alpha > 0$. We argue by contradiction and suppose that
	\begin{equation}
		\label{eq:stima-assurda}
		\norma*{(u-u_{\lambda_\star})_-}_{L^{2^\ast}\!(\Sigma_{\lambda_\star})} > \mathcal{B} \,\abs{\log\df}^{-\alpha}
	\end{equation}
	for some~$\alpha > 0$ and some large~$\mathcal{B} \ge 1$ to be determined later depending on universal constants. Our purpose here is to prove that, as a consequence of~\eqref{eq:stima-assurda}, the function~$(u - u_{\lambda_\star})_- = (u_{\lambda_\star}-u)_+$ has positive mass in a small cube. Later, we will enlarge the positivity region and, ultimately, derive a contradiction from this.
	
	First, we claim that there exists~$R_0 \ge \lambda_0 + 2$, depending on~$\df$, such that
	\begin{equation}
		\label{eq:massa-bolla}
		\int_{B_{2R_0}(x_\star) \cap \Sigma_{\lambda_\star}} (u_{\lambda_\star}-u)_+^{2^\ast} \, dx \geq \frac{\mathcal{B}^{2^\ast}}{2} \, \abs{\log\df}^{-2^\ast \alpha},
	\end{equation}
	where $x_\star=\left(0',\lambda_\star\right) \in T_{\lambda_\star}$. With the aid of~\eqref{eq:decadimento} we estimate
	\begin{equation*}
		\begin{split}
			\int_{\Sigma_{\lambda_\star} \setminus B_{2R_0}(x_\star)} u_{\lambda_\star}^{2^\ast} \, dx &\leq
			\int_{\R^n \setminus B_{2R_0}(x_\star)} u_{\lambda_\star}^{2^\ast} \, dx \\
			&\leq C_0^{2^\ast} \int_{\R^n \setminus B_{2R_0}(x_\star)} \frac{dx}{\abs*{x^{\lambda_\star}}^{2n}} = C_0^{2^\ast} \int_{\R^n \setminus B_{2R_0}(x_\star)} \frac{dx}{\abs*{x}^{2n}}
		\end{split}
	\end{equation*} 
	since~$x_\star = x_\star^{\lambda_\star}$, as~$x_\star \in T_{\lambda_\star}$. Recalling~\eqref{eq:limit-lstar}, we have that~$B_{R_0}(0) \subseteq B_{2R_0}(x_\star)$, if~$R_0 \geq \lambda_0$. Computing as in~\eqref{eq:tailubound}, we find
	\begin{equation*}
		\begin{split}
			\int_{\Sigma_{\lambda_\star} \setminus B_{2R_0}(x_\star)} u_{\lambda_\star}^{2^\ast} \, dx &\leq C_0^{2^\ast} \int_{\R^n \setminus B_{R_0}(0)} \frac{dx}{\abs*{x}^{2n}} = \frac{C_0^{2^\ast} \Haus^{n- 1}(\sfera^{n - 1})}{n} \, R_0^{-n} \\
			& = \frac{\mathcal{B}^{2^\ast}}{2} \,\abs{\log\df}^{-2^\ast \alpha},
		\end{split}
	\end{equation*}
	choosing
	\begin{equation} \label{eq:R0def}
		R_0 \coloneqq \left( \frac{2 C_0^{2^\ast} \Haus^{n-1}(\sfera^{n-1})}{n \mathcal{B}^{2^\ast}} \right)^{\!\! \frac{1}{n}}  \, \abs{\log\df}^{\frac{2 \alpha}{n - 2}}.
	\end{equation}
	Claim~\eqref{eq:massa-bolla} then follows by writing
	\begin{equation*}
		\int_{B_{2R_0}(x_\star) \cap \Sigma_{\lambda_\star}} (u_{\lambda_\star}-u)_+^{2^\ast} \, dx = \int_{\Sigma_{\lambda_\star}} (u_{\lambda_\star}-u)_+^{2^\ast} \, dx - \int_{\Sigma_{\lambda_\star} \setminus B_{2R_0}(x_\star)} (u_{\lambda_\star}-u)_+^{2^\ast} \, dx,
	\end{equation*}
	exploiting the above computation, and taking advantage of~\eqref{eq:stima-assurda}. Note that the condition~$R_0\geq \lambda_0 + 2$ holds true provided
	\begin{equation*}
		\gamma \leq \gamma_2 \coloneqq \exp\left\{-\left( \frac{n \mathcal{B}^{2^\ast}}{2C_0^{2^\ast} \Haus^{n-1}(\sfera^{n-1})}
		\right)^{\!\! \frac{1}{2^\ast\alpha}} (\lambda_0 + 2)^\frac{n-2}{2\alpha} \right\}.
	\end{equation*}
	
	Consider the slab~$\mathcal{S}_{\lambda_\star,\delta} \coloneqq \Sigma_{\lambda_\star} \setminus \Sigma_{\lambda_\star+\sqrt{n}\delta}$, for~$\delta \in (0, 1)$ to be determined later. We claim that, provided~$\delta$ is sufficiently small and exploiting the regularity of~$u$, it is possible to obtain an estimate analogous to~\eqref{eq:massa-bolla} over~$B_{2R_0}(x_\star) \cap \Sigma_{\lambda_\star+\sqrt{n}\delta}$. Indeed,~\eqref{eq:C1boundonu} and the fact that~$u_{\lambda_\star} - u = 0$ on~$T_{\lambda_\star}$ give
	\begin{equation*}
		\left(u_{\lambda_\star}-u\right)_+ \leq 2\sqrt{n} \, C_1\delta \quad \text{in } \mathcal{S}_{\lambda_\star,\delta}.
	\end{equation*}
	Therefore,
	\begin{equation*}
		\int_{B_{2R_0}(x_\star) \cap \mathcal{S}_{\lambda_\star,\delta}} (u_{\lambda_\star}-u)_+^{2^\ast} \, dx \leq 4^{n-1} \sqrt{n} \left(2\sqrt{n} \, C_1\right)^{\!2^\ast} \! R_0^{n - 1} \delta^{2^\ast + 1}.
	\end{equation*}
	By requiring
	\begin{equation}
		\label{eq:cond-delta-1}
		\delta \leq \left( \frac{ \mathcal{B}^{2^\ast} \,\abs{\log\df}^{-2^\ast \alpha}}{4^{n} \left( 2\sqrt{n} \, C_1 \right)^{\! 2^\ast} \! R_0^{n-1}} \, \right)^{\!\! \frac{1}{2^\ast + 1}} \!,
	\end{equation}
	we get
	\begin{equation*}
		\int_{B_{2R_0}(x_\star) \cap \mathcal{S}_{\lambda_\star,\delta}} (u_{\lambda_\star}-u)_+^{2^\ast} \, dx \leq \frac{\mathcal{B}^{2^\ast}}{4} \,\abs{\log\df}^{-2^\ast \alpha},
	\end{equation*}
	and from~\eqref{eq:massa-bolla} we conclude that
	\begin{equation}
		\label{eq:massa-bolla-1}
		\int_{B_{2R_0}(x_\star) \cap \Sigma_{\lambda_\star+\sqrt{n}\delta}} (u_{\lambda_\star}-u)_+^{2^\ast} \, dx \geq \frac{\mathcal{B}^{2^\ast}}{4} \,\abs{\log\df}^{-2^\ast \alpha},
	\end{equation}
	as desired.
	
	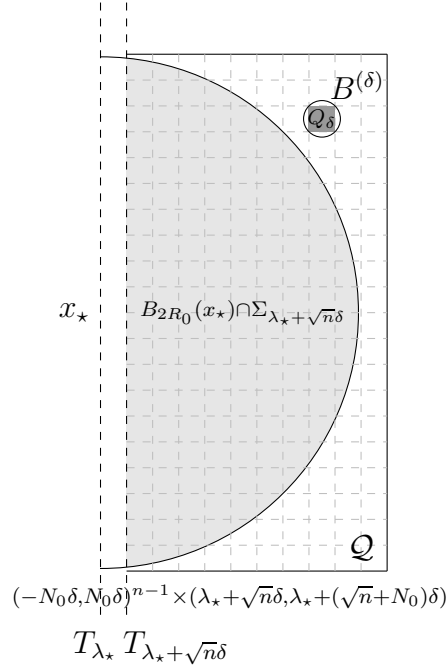
\begin{figure}
		\centering
		\begin{tikzpicture}[x=0.75pt,y=0.75pt,yscale=-1.3,xscale=1.3]
			
			\begin{scope}
				\clip (10,50) rectangle (110,250);
				\path[fill=black!10] (0,150) circle(99);
			\end{scope}
		
				\begin{scope}
				\clip (0,50) rectangle (110,250);
				\draw (0,150) circle(99);
			\end{scope}
			
			\node[anchor=east] at (0,150) {$x_\star$};
			
			\draw (10,50) -- (110,50);
			\draw (10,250) -- (110,250);
			\draw (110,50) -- (110,250);
			
			\draw[dashed, black!25] (20,50) -- (20,250);
			\draw[dashed, black!25] (30,50) -- (30,250);
			\draw[dashed, black!25] (40,50) -- (40,250);
			\draw[dashed, black!25] (50,50) -- (50,250);
			\draw[dashed, black!25] (60,50) -- (60,250);
			\draw[dashed, black!25] (70,50) -- (70,250);
			\draw[dashed, black!25] (80,50) -- (80,250);
			\draw[dashed, black!25] (90,50) -- (90,250);
			\draw[dashed, black!25] (100,50) -- (100,250);
			
			\draw[dashed, black!25] (10,60) -- (110,60);
			\draw[dashed, black!25] (10,70) -- (110,70);
			\draw[dashed, black!25] (10,80) -- (110,80);
			\draw[dashed, black!25] (10,90) -- (110,90);
			\draw[dashed, black!25] (10,100) -- (110,100);
			\draw[dashed, black!25] (10,110) -- (110,110);
			\draw[dashed, black!25] (10,120) -- (110,120);
			\draw[dashed, black!25] (10,130) -- (110,130);
			\draw[dashed, black!25] (10,140) -- (110,140);
			\draw[dashed, black!25] (10,150) -- (110,150);
			\draw[dashed, black!25] (10,160) -- (110,160);
			\draw[dashed, black!25] (10,170) -- (110,170);
			\draw[dashed, black!25] (10,180) -- (110,180);
			\draw[dashed, black!25] (10,190) -- (110,190);
			\draw[dashed, black!25] (10,200) -- (110,200);
			\draw[dashed, black!25] (10,210) -- (110,210);
			\draw[dashed, black!25] (10,220) -- (110,220);
			\draw[dashed, black!25] (10,230) -- (110,230);
			\draw[dashed, black!25] (10,240) -- (110,240);
			
			\path [fill=black!40] (80,70) rectangle (90,80);
			\draw (85,75) circle(7.0710678118655);
			
			\draw[name path=int, dashed] (0,30) -- (0,270);
			\draw[name path=ext, dashed] (10,30) -- (10,270);
			
			\draw (-2,285.5) node[anchor=south, inner sep=0.75pt, align=center] {$T_{\lambda_\star}$};
			\draw (29,287) node[anchor=south, inner sep=0.75pt, align=center] {$T_{\lambda_\star+\sqrt{n}\delta}$};
			\draw (85,75) node[inner sep=0.75pt, align=left] {$\scriptstyle Q_\delta$};
			\draw (87,62) node[anchor=west, inner sep=0.75pt, align=left] {$B^{(\delta)}$};
			\draw (95,150) node [anchor=east, inner sep=0.75pt, align=right] {$\scriptstyle B_{2R_0}(x_\star) \cap \Sigma_{\lambda_\star+\sqrt{n}\delta}$};
			\draw (-35,260) node [anchor=west, inner sep=0.75pt] {$\scriptstyle ( {-N_0 \delta, N_0 \delta})^{n - 1} \times ( {\lambda_\star + \sqrt{n} \delta, \lambda_\star + (\sqrt{n} + N_0) \delta} )$};
			\draw (100,235) node [anchor=north, inner sep=0.75pt] {$\mathcal{Q}$};
			
		\end{tikzpicture}
		\caption{The decomposition of~\ref{step:massa}.}
		\label{fig:cube}
	\end{figure}
	
	We now enclose~$B_{2R_0}(x_\star) \cap \Sigma_{\lambda_\star + \sqrt{n} \delta}$ within the half-cube
	\begin{equation*}
		\big( {-N_0 \delta, N_0 \delta} \big)^{\! n-1} \times \big( {\lambda_\star + \sqrt{n} \delta, \lambda_\star + (\sqrt{n} + N_0) \delta} \big),
	\end{equation*}
	with~$N_0 \coloneqq \left\lceil \frac{2 R_0}{\delta} \right\rceil$. Then, we further subdivide the latter into the family~$\mathcal{Q}$ of contiguous, essentially disjoint closed cubes of sides~$\delta$. We refer to Figure~\ref{fig:cube} for a graphical representation of this construction. Using that~$\mbox{card}(\mathcal{Q}) = 2^{n - 1} N_0^n$, we easily conclude from~\eqref{eq:massa-bolla-1} that there exists a cube~$Q_\delta \in \mathcal{Q}$ for which
	\begin{equation*}
		\int_{Q_\delta} (u_{\lambda_\star}-u)_+^{2^\ast} \, dx \ge \frac{1}{2^{n - 1} N_0^n}\frac{\mathcal{B}^{2^\ast}}{4} \,\abs{\log\df}^{-2^\ast \alpha} \ge \frac{\mathcal{B}^{2^\ast} \delta^n}{2\, 8^n R_0^n} \,\abs{\log\df}^{-2^\ast \alpha},
	\end{equation*}
	that is, recalling~\eqref{eq:R0def},
	\begin{equation}
		\label{eq:massa-cubo}
		\dashint_{Q_\delta} (u_{\lambda_\star}-u)_+^{2^\ast} \, dx \ge \frac{n \mathcal{B}^{2 \cdot 2^\ast}}{16^n C_0^{2^\ast} \Haus^{n-1}(\sfera^{n-1})} \,\abs{\log\df}^{-2 \cdot 2^\ast \alpha}.
	\end{equation}
	
	
	\subsection{Propagating positivity to a large region.}
	
	Now, we take advantage of  the previous step to establish the positivity of $\left(u_{\lambda_\star}-u\right)_+$ in a universally large region.
	
	Let~$r \in [\lambda_0 + 1, R_0]$ to be determined later, depending only on universal constants. From~\eqref{eq:limit-lstar},~$B_{r}(0) \cap \Sigma_{\lambda_\star} \neq \varnothing$. Take~$\delta \in (0, 1)$ as above and define
	\begin{equation}
	\label{eq:defK}
		K_\delta \coloneqq \overline{B_{r}(0) \cap \Sigma_{\lambda_\star+\sqrt{n}\delta}}.
	\end{equation}
	Then, the set~$K_\delta$ is a compact subset of~$\Sigma_{\lambda_\star}$, disjoint from the hyperplane~$T_{\lambda_\star}$ where the function~$u-u_{\lambda_\star}$ vanishes. We shall show that~$\left(u_{\lambda_\star}-u\right)_+$ is positive in~$K_\delta$.
	
	By~\eqref{eq:stima-puntuale},~$v \coloneqq u_{\lambda_\star}-u + 2C_2 \,\df^{2 / (n + 2)}$ is non-negative and solves
	\begin{equation}
		\label{eq:eqperwH}
		\begin{split}
			\Delta v + \kappa_1 \, c_{\lambda_\star} v &= (\kappa-\kappa_1) f(u) - (\kappa_{\lambda_\star}-\kappa_1) f(u_{\lambda_\star}) + \kappa_1 \, c_{\lambda_\star} (u-u_{\lambda_\star}) +\kappa_1 \, c_{\lambda_\star} v \\
			&= -g_{\lambda_\star} + 2 C_2 \,\kappa_1 \, c_{\lambda_\star} \df^{\frac{2}{n+2}} \qquad\text{in } \Sigma_{\lambda_\star},
		\end{split}
	\end{equation}
	with~$c_{\lambda_\star}$ and~$g_{\lambda_\star}$ as in~\eqref{eq:clambdadef} and~\eqref{eq:glambdadef}. Notice that~\eqref{eq:glambdaLinftyest} with~$q = n$ gives
	\begin{equation} \label{eq:RHSest}
		\big\| {\big( {-g_{\lambda_\star} + 2 C_2 \,\kappa_1 \, c_{\lambda_\star} \df^{\frac{2}{n+2}}} \big)_{\! +}} \big\|_{L^n(B)} \le C \,\df^{\frac{2}{n + 2}},
	\end{equation}
	for any ball~$B \subseteq \Sigma_{\lambda_\star}$ of radius smaller or equal to~$2$ and for some constant~$C > 0$ depending only on~$n$,~$C_0$,~$f_0$,~$\| \kappa \|_{L^\infty(\R^n)}$, and~$\norma*{f}_{C^{0,1}\left([0,C_0]\right)}$.
	
	\begin{figure}
		\centering
		\begin{tikzpicture}[x=0.75pt,y=0.75pt,yscale=-1.2,xscale=1.2,rotate=180]
			
			\tikzstyle{mark coordinate}=[minimum size=0pt,inner sep=0,outer sep=0,fill=none,circle]
			
			\coordinate (su) at (130,30);
			\coordinate (giu) at (130,270);
			
			\draw[name path=circ] (250,63.39745962) arc (-120:-240:100);
			
			\path[fill=black!5,even odd rule](su) rectangle (240,270) (250,63.39745962) arc (-120:-240:100);
			
			\path[fill=white!100] (250,63.39745962) arc (-120:-245:100);
			
			\begin{scope}
				\clip (240,30) -- (240,270) -- (230,270) -- (230,30) -- cycle;
				\fill[black!30] (250,63.39745962) arc (-120:-240:100);
			\end{scope}
			
			\draw (240,250) -- (140,250);
			\draw (240,50) -- (140,50);
			\draw (140,250) -- (140,50);
			
			\path[dashed, name path=int] (240,30) -- (240,270);
			\path[dashed, name path=cent] (230,30) -- (230,270);
			
			\tikzfillbetween[of=cent and circ,on layer=,split, every even segment/.style={fill=none,draw=none},]{black!15}
			
			\draw[name path=circ] (250,63.39745962) arc (-120:-240:100);
			
			\draw[dashed] (240,30) -- (240,270);
			\draw[dashed] (230,30) -- (230,270);
			
			\draw (240,267) node [anchor=south, inner sep=0.75pt, align=center] {$T_{\lambda_\star-\epsilon}$};
			\draw (217,17) node [anchor=south, inner sep=0.75pt, align=center] {$T_{\lambda_\star+\sqrt{n}\delta}$};
			\draw (175,150) node [anchor=west, inner sep=0.75pt, align=center] {$\displaystyle E_{r}^\epsilon$};
			\draw (228.5,150) node [anchor=east, inner sep=0.75pt, align=center] {$\displaystyle S_\delta^\epsilon$};
			\draw (207,150) node [anchor=east, inner sep=0.75pt, align=center] {$\displaystyle K_\delta$};
			\draw (152,225) node [anchor=south, inner sep=0.75pt, align=center] {$\displaystyle B^{(\delta)}$};
			\draw (237,170) node [anchor=south, inner sep=0.75pt, align=center] {$\displaystyle B^\star$};
			
			\begin{scope}[yscale=-1,xscale=1]
				\draw[fill=black!50] (152,386/5-300) circle (2);
				\draw (152,386/5-300) circle (2);
				\draw[ultra thin] (155,79-300) circle (3);
				\draw[ultra thin] (159,407/5-300) circle (4.5);
				\draw[ultra thin] (165,85-300) circle (7);
				\draw[ultra thin] (170,88-300) circle (7);
				\draw[ultra thin] (175,91-300) circle (7);
				\draw[ultra thin] (180,94-300) circle (7);
				\draw[ultra thin] (185,97-300) circle (7);
				\draw[ultra thin] (190,100-300) circle (7);
				\draw[ultra thin] (195,103-300) circle (7);
				\draw[ultra thin] (200,106-300) circle (7);
				\draw[ultra thin] (205,109-300) circle (7);
				\draw[ultra thin] (210,112-300) circle (7);
				\draw[ultra thin] (215,115-300) circle (7);
				\draw[ultra thin] (221,593/5-300) circle (4.5);
				\draw[ultra thin] (225,121-300) circle (3);
				\draw (228,614/5-300) circle (2);
			\end{scope}
		\end{tikzpicture}
		\caption{The decomposition of $\Sigma_{\lambda_\star -\epsilon}$ and the Harnack chain.}
		\label{fig:dom-dec}
	\end{figure}
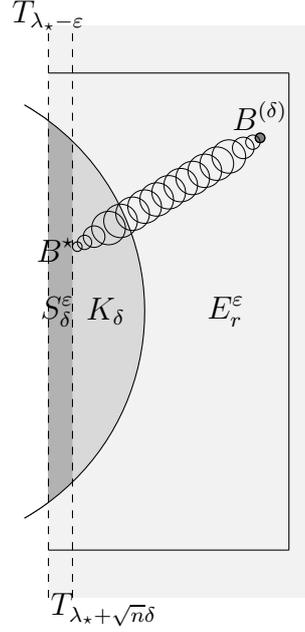
	
	Let $B^{(\delta)}$ be the ball of radius $\sqrt{n}\delta/2$ such that $Q_\delta \subseteq B^{(\delta)}$ and $B^\star$ a ball of the same radius such that~$\min_{K_\delta} v = \inf_{B^\star} v$ and~$2B^\star \subseteq \Sigma_{\lambda_\star}$ -- here,~$2B^\star$ denotes the ball with the same center as~$B^\star$ and double the radius. Then, let~$B^{(1)}$ a ball of unit radius with center at distance at most~$2$ from that of~$B^{(\delta)}$ and such that~$2B^{(1)} \subseteq \Sigma_{\lambda_\star}$. Given~$N \in \N$, let~$B^{(N)}$ be analogously defined in relation to~$B^\star$. Finally, we join~$B^{(1)}$ and~$B^{(N)}$ through a Harnack chain $\{B^{(k)}\}_{k=1}^N$ made of balls having radius~$1$ and centers belonging to the segment connecting those of~$B^{(1)}$ and~$B^{(N)}$. Moreover, we require that
	\begin{equation}
		\label{eq:misurainter}
		\frac{\abs{B^{(k)}}}{\abs*{B^{(k-1)}\cap B^{(k)}}} \le 4^n,
	\end{equation}
	for~$k=2,\dots,N$. We refer to Figure~\ref{fig:dom-dec} for a graphical representation of this construction. Using that~$K_\delta \subseteq B_{2 R_0}(x_\star) \cap \Sigma_{\lambda_\star + \sqrt{n} \delta}$, it is not hard to see that we can choose these balls in such a way that
	\begin{equation} \label{eq:Nupperbound}
		N \leq 16 \sqrt{n}R_0.
	\end{equation}
	
	Recalling the bound~\eqref{eq:RHSest} for the right-hand side of~\eqref{eq:eqperwH}, we apply the weak Harnack inequality for the Laplacian -- see, e.g.,~\cite[Theorem~7.1.2]{ps} or~\cite[Theorem~8.18]{gt} -- and get that
	\begin{equation}
	\label{eq:wHarnack}
		\left( \, \dashint_{B^{(k)}} v^s \,dx\right)^{\!\! \frac{1}{s}} \leq C_\star \left(\inf_{B^{(k)}} v + \df^\frac{2}{n+2}\right) \quad \mbox{for all } k = 1, \ldots, N,
	\end{equation}
	for any~$s \in \left( 0, 2^\ast/2\right)$ and for some~$C_\star \geq 1$ depending only on~$n$,~$C_0$,~$f_0$,~$\norma*{\kappa}_{L^\infty(\R^n)}$, $\norma*{f}_{C^{0,1}\left([0,C_0]\right)}$, and~$s$. By means of~\eqref{eq:misurainter} and~\eqref{eq:wHarnack}, we compare the average of~$v^s$ on two consecutive balls
	\begin{equation*}
		\label{eq:wH-daiterare}
		\begin{split}
			\left( \, \dashint_{B^{(k-1)}} v^s \,dx\right)^{\!\! \frac{1}{s}} &\leq  C_\star \left(\inf_{B^{(k-1)}} v + \df^\frac{2}{n+2}\right) \leq C_\star \left(\inf_{B^{(k-1)} \cap B^{(k)}} v + \df^\frac{2}{n+2}\right) \\
			&\leq C_\star \left\{\left( \, \dashint_{B^{(k-1)} \cap B^{(k)}} v^s \,dx\right)^{ \!\! \frac{1}{s}} + \df^\frac{2}{n+2}\right\} \\
			&\leq C_\star \left\{4^{n/s}\left( \, \dashint_{B^{(k)}} v^s \,dx\right)^{\!\! \frac{1}{s}} + \df^\frac{2}{n+2}\right\},
		\end{split}
	\end{equation*}
	for all~$k = 2, \ldots, N$. Iterating this estimate along the chain, we obtain
	\begin{equation}
	\label{eq:to-chain-2}
		\left( \, \dashint_{B^{(1)}} v^s \, dx\right)^{\!\! \frac{1}{s}} \leq \left(4^{n/s} C_\star\right)^{\! N-1} \left\{\left( \, \dashint_{B^{(N)}} v^s \, dx\right)^{\!\! \frac{1}{s}} + \df^\frac{2}{n+2}\right\}.
	\end{equation}
	
	Next, we join $B_\delta$ and $B_1$. Arguing as in Lemma 2.2 of~\cite{cicopepo}, we deduce that
	\begin{equation}
		\label{eq:to-chain-1}
		\left( \, \dashint_{B^{(\delta)}} v^s \,dx\right)^{\!\! \frac{1}{s}} \leq C_\flat \, \delta^{- \frac{\beta}{2}}
		\left\{ \left( \, \dashint_{B^{(1)}} v^s \,dx\right)^{\!\! \frac{1}{s}} + \df^\frac{2}{n+2} \right\},
	\end{equation}
	where the constants~$C_\flat\geq 1$ and~$\beta>0$ depend only on~$n$,~$C_0$,~$f_0$,~$s$,~$\norma*{\kappa}_{L^\infty(\R^n)}$, and~$\norma*{f}_{C^{0,1}\left([0,C_0]\right)}$. Analogously,
	\begin{equation*}
		\left( \, \dashint_{B^{(N)}} v^s \,dx\right)^{\!\! \frac{1}{s}} \leq C_\flat \, \delta^{- \frac{\beta}{2}}
		\left\{ \left( \, \dashint_{B^\star} v^s \,dx\right)^{\!\! \frac{1}{s}} + \df^\frac{2}{n+2} \right\},
	\end{equation*}
	from which, thanks to a further application of the weak Harnack inequality and the definition of~$B^\star$, we get
	\begin{equation}
		\label{eq:to-chain-3}
		\left( \, \dashint_{B^{(N)}} v^s \,dx\right)^{\!\! \frac{1}{s}} \leq 2 C_\star C_\flat \, \delta^{- \frac{\beta}{2}}
		\left( \min_{K_\delta} v + \df^\frac{2}{n+2} \right).
	\end{equation}
	Finally, by combining, in this order,~\eqref{eq:to-chain-1},~\eqref{eq:to-chain-2}, and~\eqref{eq:to-chain-3}, we conclude that
	\begin{equation*}
		\left( \, \dashint_{B^{(\delta)}} v^s \,dx\right)^{\!\! \frac{1}{s}} \leq 2 C_\star C_\flat^2 \, \delta^{- \beta} \left(4^{n/s} C_\star\right)^{\! N} \left(\min_{K_\delta} v + \df^\frac{2}{n+2}\right) \!.
	\end{equation*}
	By~\eqref{eq:stima-puntuale}, we also have that~$\abs*{u_{\lambda_\star}-u} \leq v$,  which, in turn, implies
	\begin{equation}
		\label{eq:wH-final}
		\left( \, \dashint_{B^{(\delta)}} \,\abs*{u_{\lambda_\star}-u}^s \,dx\right)^{\! \frac{1}{s}} \leq 2 C_\star C_\flat^2 \, \delta^{- \beta} \left(4^{n/s} C_\star\right)^{\! N} \left\{ \min_{K_\delta} \left(u_{\lambda_\star}-u\right) + C_3 \,\df^\frac{2}{n+2}\right\} \!,
	\end{equation}
	where~$C_3 \coloneqq 2C_2+1$.
	
	We now take~$s =2^\ast/4$ and observe that
	\begin{equation*}
		\dashint_{Q_\delta} \left(u_{\lambda_\star}-u\right)_+^{2^\ast} dx \leq C_0^{2^\ast-s} \dashint_{Q_\delta} \left(u_{\lambda_\star}-u\right)_+^{s} \,dx.
	\end{equation*}
	From this estimate and~\eqref{eq:massa-cubo}, we deduce that
	\begin{equation}
		\label{eq:massa-media}
		\left( \, \dashint_{B^{(\delta)}} \left(u_{\lambda_\star}-u\right)_+^{s} dx\right)^{\!\! \frac{1}{s}} \geq \frac{\mathcal{B}^8}{C_0^7 C_4} \,\abs{\log\df}^{-8 \alpha}.
	\end{equation}
	for some dimensional constant $C_4 \ge 1$. We use~\eqref{eq:massa-media} to estimate the left-hand side of~\eqref{eq:wH-final}, obtaining
	\begin{equation*}
		\min_{K_\delta} \left(u_{\lambda_\star}-u\right) \ge \frac{\delta^\beta \mathcal{B}^8 \! \left(4^{n/s} C_\star\right)^{\! -N}}{2 C_\star C_\flat^2 \, C_0^7 C_4} \, \abs{\log\df}^{-8 \alpha}- C_3 \,\df^\frac{2}{n+2}.
	\end{equation*}
	Moreover, since $4^{n/s} C_\star \geq 1$, recalling~\eqref{eq:Nupperbound} and~\eqref{eq:R0def} we have
	\begin{equation} \label{eq:positività-finale}
		\begin{aligned}
			\min_{K_\delta} \left(u_{\lambda_\star}-u\right) & \ge \frac{\delta^\beta \mathcal{B}^8}{2 C_\star C_\flat^2 \, C_0^7 C_4} \, \abs{\log\df}^{-8 \alpha} \, e^{-C_5 \, \mathcal{B}^{- \frac{2}{n - 2}} \abs{\log\df}^{\frac{2\alpha}{n - 2}}} \\
			& \quad - C_3 \,\df^\frac{2}{n+2},
		\end{aligned}
	\end{equation}
	with
	\begin{equation*}
		C_5 \coloneqq 16 \sqrt{n} \left( \frac{2 C_0^{2^\ast} \Haus^{n - 1}(\sfera^{n - 1})}{n} \right)^{\!\! \frac{1}{n}} \log \left(4^{n/s} C_\star \right) \!.
	\end{equation*}
	
	At this point, we need to choose the parameters~$\alpha$,~$\mathcal{B}$ -- depending only on universal constants -- and $\delta$ -- in relation to~$\df$ as well -- in a way that the right-hand side of~\eqref{eq:positività-finale} is non-negative. First, let
	\begin{align*}
		\mathcal{B} \coloneqq \max \left\{ \left( 2 C_\star C_\flat^2 C_0^7 C_3 C_4 \right)^{\! \frac{1}{8}}, \big( {(n + 2) C_5} \big)^{\! \frac{n - 2}{2}} \right\}
	\end{align*}
	and observe that it only depends on~$n$,~$C_0$,~$f_0$,~$\norma*{\kappa}_{L^\infty(\R^n)}$, and~$\norma*{f}_{C^{0,1}\left([0,C_0]\right)}$. With this choice,~\eqref{eq:positività-finale} simplifies to
	\begin{equation*}
		\min_{K_\delta} \left(u_{\lambda_\star}-u\right) \ge C_3 \left\{\delta^\beta \, e^{-\frac{1}{n+2} \,\abs{\log\df}^{\frac{2\alpha}{n-2}}} \abs{\log\df}^{-8 \alpha} - \df^\frac{2}{n+2} \right\}.
	\end{equation*}
	Then, we take
	\begin{equation} \label{eq:alphaanddeltadef}
		\alpha \coloneqq \frac{n}{2^\ast} = \frac{n-2}{2} \quad \text{and} \quad \delta \coloneqq \left(\df^\frac{1}{n+2} \,\abs{\log\df}^{4n} \right)^{\! \frac{1}{\beta}} \!,
	\end{equation}
	so that
	\begin{equation}
		\label{eq:positività-finale-1}
		\min_{K_\delta} \left(u_{\lambda_\star}-u\right) \ge C_3 \,\df^\frac{2}{n+2} \left( \abs{\log\df}^{8}- 1 \right) \ge C_3 \,\df^\frac{2}{n+2},
	\end{equation}
	for
	\begin{equation*} \label{eq:gammale1/e2}
		\gamma \leq \gamma_3 \coloneqq \frac{1}{e^2}.
	\end{equation*}
	Note that~$\delta \to 0$ as $\df \to 0$.
	
	We conclude this part of the proof by showing that the previous choice of~$\delta$ is consistent with condition~\eqref{eq:cond-delta-1}, at least if~$\gamma$ is small enough. Recalling the definition of~$R_0$ given in~\eqref{eq:R0def} and noticing that~$2^\ast+1 \geq 3$,~$\delta\leq 1$, and~$\abs{\log\df} \geq 1$, we deduce that~\eqref{eq:cond-delta-1} is verified if
	\begin{equation*}
		\delta^3 \leq \frac{1}{C_6^3} \,\abs{\log\df}^{-2n},
	\end{equation*}
	for some constant~$C_6 \ge 1$ depending only on~$n$,~$C_0$,~$f_0$,~$\norma*{\kappa}_{L^\infty(\R^n)}$, and~$\norma*{f}_{C^{0,1}\left([0,C_0]\right)}$. Substituting the definition of $\delta$ in this inequality, we get that~\eqref{eq:cond-delta-1} holds if
	\begin{equation*}
		\df^\frac{1}{n+2} \,\abs{\log\df}^{2n\left(2+\frac{\beta}{3}\right)} \leq \frac{1}{C_6^\beta},
	\end{equation*}
	which is true, provided~$\gamma$ is smaller than a positive constant~$\gamma_4$ depending only on~$n$,~$C_0$,~$f_0$,~$\norma*{\kappa}_{L^\infty(\R^n)}$, and~$\norma*{f}_{C^{0,1}\left([0,C_0]\right)}$.
	
	
	\subsection{Deriving a contradiction.} \label{step:contr}
	
	We aim to show here that
	\begin{equation} \label{eq:contrest}
		\norma*{(u - u_{\lambda_\star - \varepsilon})_+}_{L^{2^\ast} \! (\Sigma_{\lambda_\star - \varepsilon})} \le 4 S^{-2} \,\df,
	\end{equation}
	for every~$\varepsilon > 0$ small enough and provided~$\gamma$ is small enough. Clearly, this would lead to a contradiction with the definition of~$\lambda_\star$ and thus to the validity of~\eqref{eq:u-ulambdastar-small}.
	
	For~$\epsilon > 0$ to be determined later, define
	\begin{equation*}
		E_r^\epsilon \coloneqq \big( {\R^n \setminus B_r(0)} \big) \cap \Sigma_{\lambda_\star-\epsilon} \quad \text{and} \quad S_\delta^\epsilon \coloneqq B_r(0) \cap \left(\Sigma_{\lambda_\star-\epsilon} \setminus \overline{\Sigma_{\lambda_\star+\sqrt{n}\delta}} \right).
	\end{equation*}
	In this way,~$E_r^\epsilon$,~$K_\delta$, and~$S_\delta^\epsilon$ form a decomposition of~$\Sigma_{\lambda_\star-\epsilon}$ made of essentially disjoint sets -- recall that~$K_\delta$ is defined in~\eqref{eq:defK}. We refer to Figure~\ref{fig:dom-dec} for a graphical representation of this decomposition. Exploiting the gradient bound~\eqref{eq:C1boundonu} and~\eqref{eq:positività-finale-1}, we get
	\begin{equation*}
		u_{\lambda_\star-\epsilon}-u \geq u_{\lambda_\star}-u - 2 C_1 \,\epsilon \geq  C_3 \,\df^\frac{2}{n+2} - 2 C_1 \,\epsilon \quad\text{in} \; K_\delta.
	\end{equation*}
	By taking
	\begin{equation*}
		\epsilon \in \left( 0, \frac{C_3}{4C_1} \,\df^\frac{2}{n+2} \right] \!,
	\end{equation*}
	we obtain that $u_{\lambda_\star-\epsilon}-u  \geq 0$ in $K_\delta$. In addition, note that~$\epsilon \to 0$ as~$\df \to 0$.
	
	In order to reach a contradiction, we follow here the argument of Step~2 in the proof of Theorem~3.1 in~\cite{sciu}. Proceeding as in the deduction of~\eqref{eq:test-finale}, we get
	\begin{equation*}
		\begin{split}
			\int_{\Sigma_{\lambda_\star-\epsilon}} \,\abs*{\nabla (u-u_{\lambda_\star-\epsilon})_+}^2 \, dx &= \int_{\Sigma_{\lambda_\star-\epsilon}} \! (\kappa-\kappa_1) f(u) (u-u_{\lambda_\star-\epsilon})_+ \, dx \\
			&\quad- \int_{\Sigma_{\lambda_\star-\epsilon}} \! (\kappa_{\lambda_\star-\epsilon}-\kappa_1) f(u_{\lambda_\star-\epsilon}) (u-u_{\lambda_\star-\epsilon})_+ \, dx \\
			&\quad+ \kappa_1 \int_{\Sigma_{\lambda_\star-\epsilon}} \! \left(f(u)-f(u_{\lambda_\star-\epsilon})\right) (u-u_{\lambda_\star-\epsilon})_+ \, dx.
		\end{split}
	\end{equation*}
	We estimate the first two terms on the right-hand side as in~\ref{step:MPstart}, deducing
	\begin{equation} \label{eq:first2termsest}
		\begin{aligned}
			& \int_{\Sigma_{\lambda_\star-\epsilon}} \! \left(\kappa-\kappa_1\right) f(u) \left(u-u_{\lambda_\star-\epsilon}\right)_+ dx - \int_{\Sigma_{\lambda_\star-\epsilon}} \! (\kappa_{\lambda_\star-\epsilon}-\kappa_1) f(u_{\lambda_\star-\epsilon}) (u-u_{\lambda_\star-\epsilon})_+ \, dx \\
			& \hspace{187pt} \le 2 S^{-1} \,\df \norma*{\nabla (u-u_{\lambda_\star-\epsilon})_+ }_{L^2(\Sigma_{\lambda_\star-\epsilon})}.
		\end{aligned}
	\end{equation}
	To deal with the third term, we exploit the decomposition of~$\Sigma_{\lambda_\star-\epsilon}$ made previously. Using that~$(u - u_{\lambda_\star - \varepsilon})_+ = 0$ in~$K_\delta$, assumptions~\eqref{eq:ip-f-tipolip} and~\eqref{eq:decadimento}, as well as H\"older and Sobolev inequalities, we obtain
	\begin{equation} \label{eq:thirdintest}
		\begin{split}
			& \int_{\Sigma_{\lambda_\star-\epsilon}} \! \left(f(u)-f(u_{\lambda_\star-\epsilon})\right) (u-u_{\lambda_\star-\epsilon})_+ \, dx \le L \int_{\Sigma_{\lambda_\star-\epsilon} \setminus K_\delta} \! u^{p - 1} (u-u_{\lambda_\star-\epsilon})_+^2 \, dx \\
			& \hspace{14pt} \leq L \left\{ \norma*{u}_{L^{2^\ast}\! \left(E_r^\epsilon\right)}^{2^\ast-2} \int_{E_r^\epsilon} (u-u_{\lambda_\star-\epsilon})_+^{2^\ast} \,dx + \norma*{u}_{L^\infty(S_\delta^\varepsilon)}^{p - 1} \int_{S^\epsilon_\delta} (u-u_{\lambda_\star-\epsilon})_+^2 \, dx \right\}, \\
			& \hspace{14pt} \leq L \left\{ S^{-2} \norma*{u}_{L^{2^\ast}\!\left(E_r^\epsilon\right)}^{2^\ast-2} \int_{\Sigma_{\lambda_\star-\epsilon}} \,\abs*{\nabla (u-u_{\lambda_\star-\epsilon})_+}^{2} \,dx + C_0^{p-1} \int_{S^\epsilon_\delta} (u-u_{\lambda_\star-\epsilon})_+^2 \, dx \right\}. \hspace{-10pt}
		\end{split}
	\end{equation}
	To estimate the last integral, we take advantage of the Poincar\'e inequality on slabs. Let~$\mathcal{S} \coloneqq \Sigma_{\lambda_\star - \varepsilon} \setminus \overline{\Sigma_{\lambda_\star + 2 \sqrt{n} \delta + \varepsilon}}$ and~$w$ be the even reflection across~$T_{\lambda_\star + \sqrt{n} \delta}$ of the restriction of~$(u - u_{\lambda_\star - \varepsilon})_+$ to~$\Sigma_{\lambda_\star - \varepsilon} \setminus \Sigma_{\lambda_\star + \sqrt{n} \delta}$. That is, for~$x \in \mathcal{S}$,
	\begin{equation*}
		w(x) \coloneqq \begin{dcases}
			(u - u_{\lambda_\star - \varepsilon})_+(x) & \quad \mbox{if } x \in \Sigma_{\lambda_\star - \varepsilon} \setminus \Sigma_{\lambda_\star + \sqrt{n} \delta}, \\
			(u - u_{\lambda_\star - \varepsilon})_+(x) & \quad \mbox{if } x^{\lambda_\star + \sqrt{n} \delta} \in \Sigma_{\lambda_\star - \varepsilon} \setminus \overline{\Sigma_{\lambda_\star + \sqrt{n} \delta}}.
		\end{dcases}
	\end{equation*}
	Since~$w$ is Lipschitz continuous and vanishes on~$\partial \mathcal{S}$, we may apply Theorem~13.19 in~\cite{leoni} and infer that
	\begin{equation*}
		\int_{\mathcal{S}} w^2 \, dx \le \mathcal{C}_P(S^\epsilon_\delta) \int_{\mathcal{S}} \,\abs*{\nabla w}^2 \, dx,
	\end{equation*}
	with
	\begin{equation} \label{eq:Cpoindef}
		\mathcal{C}_P (S^\epsilon_\delta) \coloneqq 2n \big( {\sqrt{n}\delta+\epsilon} \big)^{\!2}.
	\end{equation}
	From this and the inclusion~$S^\epsilon_\delta \subseteq \Sigma_{\lambda_\star-\epsilon} \setminus \Sigma_{\lambda_\star+\sqrt{n}\delta}$, we immediately get
	\begin{equation*}
		\int_{S^\epsilon_\delta} \left(u-u_{\lambda_\star-\epsilon}\right)_+^2 dx \le \mathcal{C}_P (S^\epsilon_\delta) \int_{\Sigma_{\lambda_\star-\epsilon} \setminus \Sigma_{\lambda_\star+\sqrt{n}\delta}} \,\abs*{\nabla (u-u_{\lambda_\star-\epsilon})_+}^{2} \, dx.
	\end{equation*}
	Combining this with~\eqref{eq:first2termsest} and~\eqref{eq:thirdintest}, we conclude that
	\begin{equation}
		\label{eq:contrad-1}
		\mathcal{A}(r,\epsilon) \, \norma*{\nabla (u-u_{\lambda_\star-\epsilon})_+ }^2_{L^2\left(\Sigma_{\lambda_\star-\epsilon}\right)} 
		\leq  2S^{-1} \,\df \norma*{\nabla (u-u_{\lambda_\star-\epsilon})_+ }_{L^2(\Sigma_{\lambda_\star-\epsilon})},
	\end{equation}
	where
	\begin{equation*}
		\mathcal{A}(r,\epsilon) \coloneqq 1- L\kappa_1 \, S^{-2} \norma*{u}_{L^{2^\ast}\!\left(E_r^\epsilon\right)}^{2^\ast-2} - L\kappa_1 \, C_0^{p-1} \mathcal{C}_P(S^\epsilon_\delta).
	\end{equation*}
	
	Now, we either have that~$\norma*{\nabla (u-u_{\lambda_\star-\epsilon})_+ }_{L^2\left(\Sigma_{\lambda_\star-\epsilon}\right)} = 0$ -- which would lead to~$(u - u_{\lambda_\star - \varepsilon})_+ = 0$ in~$\Sigma_{\lambda_\star - \varepsilon}$ and thus, trivially, to~\eqref{eq:contrest} -- or~$\norma*{\nabla (u-u_{\lambda_\star-\epsilon})_+ }_{L^2\left(\Sigma_{\lambda_\star-\epsilon}\right)} > 0$. Assuming the latter, inequality~\eqref{eq:contrad-1} becomes
	\begin{equation*}
		\label{eq:contrad-1-2}
		\mathcal{A}(r,\epsilon) \norma*{\nabla (u-u_{\lambda_\star-\epsilon})_+ }_{L^2\left(\Sigma_{\lambda_\star-\epsilon}\right)} \leq 2S^{-1} \,\df.
	\end{equation*}
	In order to deduce~\eqref{eq:contrest} from this, it suffices to show that the parameters~$r$ and~$\epsilon$ can be chosen in a way that
	\begin{equation}
		\label{eq:contrad-2}
		\mathcal{A}(r,\epsilon) \geq \frac{1}{2},
	\end{equation}
	that is
	\begin{equation} \label{eq:contrad-3}
		L\kappa_1 \, S^{-2} \norma*{u}_{L^{2^\ast}\!\left(E_r^\epsilon\right)}^{2^\ast-2} + L\kappa_1 \, C_0^{p-1} \mathcal{C}_P (S^\epsilon_\delta) \leq \frac{1}{2}.
	\end{equation}
	
	It is clear that the second summand can be made arbitrarily small by taking~$\varepsilon$ and~$\gamma$ sufficiently tiny. Indeed, recalling definitions~\eqref{eq:alphaanddeltadef},~\eqref{eq:Cpoindef}, and taking advantage of the numerical inequality~$- \log t \le \frac{8 n^2 (n + 2)}{e (n - 2)} \, t^{- \frac{n - 2}{8 n^2 (n + 2)}}$ for every~$t \in \left( 0, e^{- \frac{8 n^2 (n + 2)}{n - 2}} \right]$, we have
	\begin{equation*}
		L \kappa_1 \, C_0^{p-1} \mathcal{C}_P (S^\epsilon_\delta) \le 2 n L \| \kappa \|_{L^\infty(\R^n)} \, C_0^{p-1} \left\{ \sqrt{n} \left( \frac{8 n^2 (n + 2)}{e (n - 2)} \right)^{\!\! \frac{4 n}{\beta}} \! \gamma^\frac{1}{2n\beta} + \epsilon \right\}^{\! 2} \le \frac{1}{4},
	\end{equation*}
	provided~$\varepsilon \le 1 \big/ 8 \sqrt{n L \| \kappa \|_{L^\infty(\R^n)}} C_0^{p - 1}$ and
	\begin{equation*}
		\gamma \le \gamma_4 \coloneqq \min \left\{ e^{- \frac{8 n^2 (n + 2)}{n - 2}}, \left( \frac{1}{32 n^2 L \| \kappa \|_{L^\infty(\R^n)} C_0^{p - 1}} \right)^{\!\! n \beta} \! \left( \frac{e (n - 2)}{8 n^2 (n + 2)} \right)^{\!\! 8 n^2} \right\}.
	\end{equation*}
	On the other hand, the smallness of the first summand in~\eqref{eq:contrad-3} can be achieved by choosing~$r$ sufficiently large. Indeed, recalling~\eqref{eq:decadimento}, we estimate
	\begin{align*}
		L\kappa_1\, S^{-2} \norma*{u}_{L^{2^\ast}\!\left(E_r^\epsilon\right)}^{2^\ast-2} & \le L \| \kappa \|_{L^\infty(\R^n)} \, S^{-2} \left( C_0^{2^\ast} \int_{\R^n \setminus B_r(0)} \frac{dx}{\abs{x}^{2n}} \right)^{\!\! \frac{2^\ast - 2}{2^\ast}} \\
		& = L \| \kappa \|_{L^\infty(\R^n)} \, S^{-2} \left( \frac{C_0^{2^\ast} \Haus^{n - 1}(\sfera^{n - 1})}{n} \right)^{\!\! \frac{2}{n}} r^{-2} \le \frac{1}{4},
	\end{align*}
	provided
	\begin{equation*}
		r \coloneqq \max \left\{ \frac{2 \sqrt{L \| \kappa \|_{L^\infty(\R^n)}}}{S} \left( \frac{C_0^{2^\ast} \Haus^{n - 1}(\sfera^{n - 1})}{n} \right)^{\!\! \frac{1}{n}} \! , \lambda_0 + 1 \right\}.
	\end{equation*}
	Note that~$r \le R_0$ if we restrict ourselves to
	\begin{equation*}
		\gamma \le \gamma_5 \coloneqq \exp \left\{- \frac{2^{\frac{n - 1}{n}} \sqrt{L \| \kappa \|_{L^\infty(\R^n)}} \, \mathcal{B}^{\frac{2}{n - 2}}}{S} \right\}.
	\end{equation*}
	
	From the above computations it follows that~\eqref{eq:contrad-2} holds true and thus that
	\begin{equation*}
		\norma*{\nabla (u-u_{\lambda_\star-\epsilon})_+ }_{L^2\left(\Sigma_{\lambda_\star-\epsilon}\right)} \leq 4S^{-1} \,\df.
	\end{equation*}
	Hence, an application of Sobolev inequality yields the validity of~\eqref{eq:contrest}.
	
	We thus established~\eqref{eq:u-ulambdastar-small} with~$\alpha = \frac{n - 2}{2}$, which, together with~\eqref{eq:stimaperlambdastar}, gives that
	\begin{equation}
		\label{eq:2*-Rn}
		\norma*{u-u_{\lambda_\star}}_{L^{2^\ast}\!(\R^n)} = 2\norma*{u-u_{\lambda_\star}}_{L^{2^\ast}\!(\Sigma_{\lambda_\star})} \leq C \,\abs{\log\df}^{1-\frac{n}{2}},
	\end{equation}
	for some universal constant~$C \ge 1$.
	
	We conclude this step of the proof by showing the validity of the claims made in~\eqref{eq:mainMPclaim}. In~\eqref{eq:stima-puntuale} we have already obtained the pointwise estimate appearing on the first line. The~$L^\infty$-bound on the second line can be easily established by applying the arguments of~\ref{step:int2point} to the functions~$u-u_{\lambda_\star}$ and~$u_{\lambda_\star}-u$. Finally, the~$L^2$-estimate for the gradient can be obtained by observing that
	\begin{equation*}
		\begin{split}
			\left(f(u)- f(u_{\lambda_\star}) \right) \left(u-u_{\lambda_\star}\right) &= \left(f(u)- f(u_{\lambda_\star}) \right) \left(u-u_{\lambda_\star}\right)_+ + \left(f(u_{\lambda_\star}) - f(u) \right) \left(u-u_{\lambda_\star}\right)_{-} \\
			&\leq L \, u^{p-1} \left(u-u_{\lambda_\star}\right)_+^2 + L \, u_{\lambda_\star}^{p-1} \left(u-u_{\lambda_\star}\right)_{-}^2 \\
			&\leq L \,\max\left\{u,u_{\lambda_\star}\right\}^{p-1} \abs*{u-u_{\lambda_\star}}^2,
		\end{split}
	\end{equation*}
	and taking advantage of H\"older inequality, hypothesis~\eqref{eq:decadimento}, and~\eqref{eq:2*-Rn}
	\begin{align*}
		\int_{\R^n} &\,\abs*{\nabla (u-u_{\lambda_\star})}^2 \, dx = \int_{\R^n} \left(\kappa f(u)-\kappa_{\lambda_\star} f(u_{\lambda_\star})\right)  \left(u-u_{\lambda_\star}\right) dx \\
		&= \int_{\R^n} (\kappa-\kappa_1) f(u) \left(u-u_{\lambda_\star}\right) dx - \int_{\R^n} (\kappa_{\lambda_\star}-\kappa_1) f(u_{\lambda_\star}) \left(u-u_{\lambda_\star}\right) dx \\
		& \quad + \kappa_1 \int_{\R^n} \left(f(u)-f(u_{\lambda_\star})\right) \left(u-u_{\lambda_\star}\right) dx \\
		&\leq 2 \df \,\norma*{u-u_{\lambda_\star}}_{L^{2^\ast}\!(\R^n)} + L\kappa_1 \int_{\R^n} \max\left\{u,u_{\lambda_\star}\right\}^{p-1}  \abs*{u-u_{\lambda_\star}}^2 \, dx \\
		&\leq 2 \df \,\norma*{u-u_{\lambda_\star}}_{L^{2^\ast}\!(\R^n)} + L\kappa_1 \left(\int_{\R^n} \max\left\{u,u_{\lambda_\star}\right\}^{2^\ast} \! dx \right)^{\!\! \frac{2^\ast-2}{2^\ast}} \norma*{u-u_{\lambda_\star}}_{L^{2^\ast}\!(\R^n)}^2 \\
		&\leq C \,\abs{\log\df}^{2\left(1-\frac{n}{2}\right)},
	\end{align*}
	for some constant~$C>0$ depending only on~$n$,~$C_0$,~$L$,~$f_0$,~$\norma*{\kappa}_{L^\infty(\R^n)}$, and~$\norma*{f}_{C^{0,1}\left([0,C_0]\right)}$.
	
	
	\subsection{Almost radial symmetry with respect to an approximate center.} \label{step:almostradsym}
	
	Repeating the above procedure along~$n$ linearly independent directions -- say,~$e_1,\ldots,e_n$ -- we identify~$n$ values~$\lambda_k = \lambda_\star(e_k) \in \R$,~$k = 1, \dots, n$, for which~\eqref{eq:mainMPclaim} holds true. Considering the corresponding maximal hyperplanes~$T_{e_k, \lambda_k}$,~$k=1,\dots,n$, we define the approximate center of symmetry~$\mathcal{O}$ as their intersection and the reflection with respect to $\mathcal{O}$ by $u_{\mathcal{O}}(x) \coloneqq u\left(2\lambda_1-x_1,\dots,2\lambda_n-x_n\right)$. As a consequence of this definition and~\eqref{eq:mainMPclaim}, by applying the triangular inequality we deduce
	\begin{equation} \label{eq:u-uOests}
		\norma*{u - u_{\mathcal{O}}}_{L^\infty(\R^n)} + \norma*{\nabla (u - u_{\mathcal{O}})}_{L^2(\R^n)} \le n C_\sharp \,\abs{\log\df}^{1-\frac{n}{2}}.
	\end{equation}
	Also notice that, by virtue of~\eqref{eq:limit-lstar}, we know that
	\begin{equation} \label{eq:Oinlambdacube}
		\mathcal{O} \in {[- \lambda_0, \lambda_0]}^n.
	\end{equation}
	
	Let~$\Sigma_{\omega, \lambda_\star}$ and~$T_{\omega, \lambda_\star}$, with~$\lambda_\star = \lambda_\star(\omega)$, be respectively the maximal half-space and maximal hyperplane corresponding to a direction~$\omega \in \sfera^{n - 1}$. We shall prove that
	\begin{equation} \label{eq:claimhyperclose}
		\mbox{either} \quad \mathcal{O} \in \overline{\Sigma_{\omega, \lambda_\star}} \quad \mbox{or} \quad \mathrm{dist}(\mathcal{O}, T_{\omega, \lambda_\star}) \le C_7 \, \abs*{\log \df}^{\frac{1}{3} \left( 1 - \frac{n}{2} \right)},
	\end{equation}
	for every~$\omega \in \sfera^{n - 1}$ and for some universal constant~$C_7 > 0$.
	
	To establish~\eqref{eq:claimhyperclose}, let~$\omega \in \sfera^{n - 1}$ be fixed and assume that
	\begin{equation} \label{eq:OnotinSigma}
		\mathcal{O} \notin \overline{\Sigma_{\lambda_\star}},
	\end{equation}
	where here and in what follows we drop the reference to~$\omega$ in the notation. We subdivide the space into slabs of fixed width of order~$\mathrm{dist}(\mathcal{O}, T_{\lambda_\star})$ in which the energy has an upper bound, thanks to~\eqref{eq:mainMPclaim}. By comparing this information with the lower bound~\eqref{eq:boundmassa}, we are eventually led to the validity of the second alternative in~\eqref{eq:claimhyperclose}. This geometric construction is partially borrowed from Section~4 in~\cite{cfmnov}. Following are the details of its implementation in our setting.
	
	In view of a change of variables and of the~$\mathcal{D}^{1,2}$-estimate in~\eqref{eq:mainMPclaim}, we see that
	\begin{align*}
		\left| \| \nabla u \|_{L^2(\Sigma_{\lambda_\star})} - \| \nabla u \|_{L^2(\R^n \setminus \Sigma_{\lambda_\star})} \right| & = \left| \| \nabla u \|_{L^2(\Sigma_{\lambda_\star})} - \| \nabla u_{\lambda_\star} \|_{L^2(\Sigma_{\lambda_\star})} \right| \\
		& \le \| \nabla (u - u_{\lambda_\star}) \|_{L^2(\Sigma_{\lambda_\star})} \le \| \nabla (u - u_{\lambda_\star}) \|_{L^2(\R^n)} \\
		& \le C_\sharp \,\abs*{\log \df}^{1 - \frac{n}{2}}.
	\end{align*}
	Using this and~\eqref{eq:decadgrad}, we compute
	\begin{equation} \label{eq:energ-semispazio}
	\begin{aligned}
		& \left| \int_{\Sigma_{\lambda_\star}} \,\abs*{\nabla u}^2 \, dx - \frac{1}{2} \int_{\R^n} \,\abs*{\nabla u}^2 \, dx \right| = \frac{1}{2} \left| \| \nabla u \|_{L^2(\Sigma_{\lambda_\star})}^2 - \| \nabla u \|_{L^2(\R^n \setminus \Sigma_{\lambda_\star})}^2 \right| \\
		& \hspace{45pt} \le \left| \| \nabla u \|_{L^2(\Sigma_{\lambda_\star})} - \| \nabla u \|_{L^2(\R^n \setminus \Sigma_{\lambda_\star})} \right| \, \| \nabla u \|_{L^2(\R^n)} \\
		& \hspace{45pt} \le C_\sharp \,\abs*{\log \df}^{1 - \frac{n}{2}} \, C_1 \left( \int_{\R^n} \frac{dx}{\left( 1 + |x|^{n - 1} \right)^2} \right)^{\!\! \frac{1}{2}} \le C \, \abs*{\log \df}^{1 - \frac{n}{2}},
	\end{aligned}
	\end{equation}
	for some universal constant~$C \ge 1$.
	
	Denote with~$\Sigma_{\lambda_\star \mathcal{O}}$ and~$u_{\lambda_\star \mathcal{O}}$ the reflections of~$\Sigma_{\lambda_\star}$ and~$u_{\lambda_\star}$ with respect to~$\mathcal{O}$. Consider the slab~$\mathcal{S}_1 \coloneqq \R^n \setminus \left( \Sigma_{\lambda_\star} \cup \Sigma_{\lambda_\star\mathcal{O}} \right)$ and observe that~$\mathcal{S}_1 \neq \varnothing$, thanks to~\eqref{eq:OnotinSigma}. The~$\mathcal{D}^{1,2}$-bounds of~\eqref{eq:mainMPclaim} and~\eqref{eq:u-uOests} give that
	\begin{equation*}
		\norma*{ \nabla (u-u_{\lambda_{\star\mathcal{O}}}) }_{L^2(\R^n)} \le C \, \abs{\log\df}^{1-\frac{n}{2}}.
	\end{equation*}
	Through this, we easily obtain estimate~\eqref{eq:energ-semispazio} with~$\Sigma_{\lambda_\star\mathcal{O}}$ in place of~$\Sigma_{\lambda_\star}$. This, together with~\eqref{eq:energ-semispazio}, implies that
	\begin{equation}
		\label{eq:step1}
		\sigma_1 \coloneqq \| \nabla u \|_{L^2(\mathcal{S}_1)} \leq C_8 \,\abs{\log\df}^{\frac{1}{2} \left( 1-\frac{n}{2} \right)},
	\end{equation}
	for some constant~$C_8>0$ depending only on~$n$,~$C_0$,~$L$,~$f_0$,~$\norma*{\kappa}_{L^\infty(\R^n)}$, and~$\norma*{f}_{C^{0,1}\left([0,C_0]\right)}$. Define then the slab~$\mathcal{S}_2$ as the reflection of~$\mathcal{S}_1$ with respect to~$T_{\lambda_\star}$. Since~$|\nabla u_{\lambda_\star}(x)| = |\nabla u(x^{\lambda_\star})|$ for every~$x \in \R^n$, we deduce
	\begin{equation*}
		\int_{\mathcal{S}_2} \,\abs*{\nabla u_{\lambda_\star}}^2 \, dx = \int_{\mathcal{S}_1} \,\abs*{\nabla u}^2 \, dx.
	\end{equation*}
	By combining this estimate with~\eqref{eq:mainMPclaim} and~\eqref{eq:step1}, we get
	\begin{equation}
		\label{eq:step2}
		\sigma_2 \coloneqq \norma*{\nabla u}_{L^2\left(\mathcal{S}_2\right)} \leq \left( C_\sharp + C_8 \right) \abs{\log\df}^{\frac{1}{2}\left(1-\frac{n}{2}\right)}.
	\end{equation}
	Let~$\mathcal{S}_3$ be the reflection of~$\mathcal{S}_2$ with respect to $\mathcal{O}$. Clearly,
	\begin{equation*}
		\int_{\mathcal{S}_3} \,\abs*{\nabla u_{\mathcal{O}}}^2 \, dx = \int_{\mathcal{S}_2} \,\abs*{\nabla u}^2 \, dx.
	\end{equation*}
	Putting together this with~\eqref{eq:u-uOests} and~\eqref{eq:step2}, we get
	\begin{equation*}
		\label{eq:step3}
		\sigma_3 \coloneqq \norma*{\nabla u}_{L^2\left(\mathcal{S}_3\right)} \leq \big( {(n+1)C_\sharp+C_8}\big) \abs{\log\df}^{\frac{1}{2}\left(1-\frac{n}{2}\right)}.
	\end{equation*}
	
	\begin{figure}
		\centering
		\begin{tikzpicture}[x=0.75pt,y=0.75pt,yscale=-1,xscale=1,rotate=45]
			
			\path[name path=s11] (15,-100)--(15,100);
			\path[dashed,name path=s12] (-15,-100)--(-15,100);
			
			\node[inner sep=0.75pt] (A) at (0,90) {$\displaystyle \mathcal{S}_1$};
			\node[inner sep=0.75pt] (B) at (30,90) {$\displaystyle \mathcal{S}_2$};
			
			\node[anchor=south,inner sep=0.75pt] (T) at (16.5,120) {$\displaystyle T_{\lambda_\star}$};
			
			\path[dashed,name path=s2] (45,-100)--(45,100);
			\path[dashed,name path=s4] (-45,-100)--(-45,100);
			
			\node[inner sep=0.75pt] (C) at (-30,90) {$\displaystyle \mathcal{S}_3$};
			\node[inner sep=0.75pt] at (60,90) {$\displaystyle \mathcal{S}_4$};
			
			\path[dashed,name path=s3] (75,-100)--(75,100);
			\path[dashed,name path=s5] (-75,-100)--(-75,100);
			
			\node[inner sep=0.75pt] at (-60,90) {$\displaystyle \mathcal{S}_5$};
			
			\tikzfillbetween[of=s11 and s12,on layer=,]{black!10}
			\tikzfillbetween[of=s2 and s3,on layer=,]{black!10}
			\tikzfillbetween[of=s4 and s5,on layer=,]{black!10}
			
			\draw (0,0) circle (70) node [anchor=south west, left=52pt, inner sep=0.75pt]{$\displaystyle B_{2R_1}(\mathcal{O})$};
			
			\node (OO) at (0,0) {$\mathcal{O}$};
			
			\draw [->] (A) to [bend left=35] (T);
			\draw [->] (T) to [bend left=35] (B);
			
			\draw [->] (B) to [bend left=45] (OO);
			\draw [->] (OO) to [bend left=45] (C);
			
			\draw[<->] (15,-90) to (45,-90);
			\node[inner sep=0.75pt] at (30,-95) {$\displaystyle \mathfrak{s}$};
			
			\filldraw[black] (0,0) circle (0.1pt);
			
			\draw (15,-100)--(15,100);
			\draw[dashed] (-15,-100)--(-15,100);
			
			\draw[dashed] (45,-100)--(45,100);
			\draw[dashed] (-45,-100)--(-45,100);
			
			\draw[dashed] (75,-100)--(75,100);
			\draw[dashed] (-75,-100)--(-75,100);
		\end{tikzpicture}
		\caption{Example of construction of the slabs $\mathcal{S}_k$.}
		\label{fig:slabdiv}
	\end{figure}
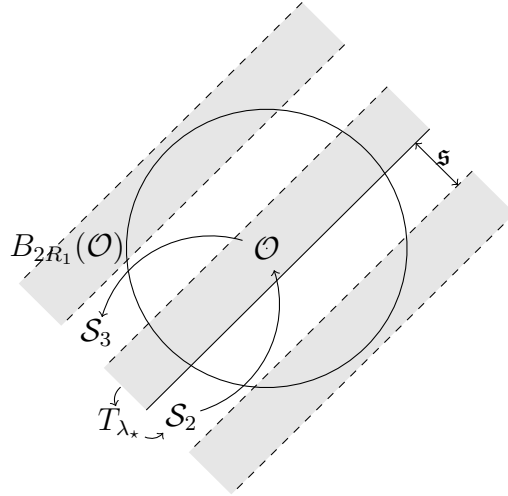
	
	Proceeding inductively, we define~$\mathcal{S}_{2k}$ as the reflection of~$\mathcal{S}_{2k-1}$ with respect to the hyperplane~$T_{\lambda_\star}$ while~$\mathcal{S}_{2k+1}$ as the reflection of~$\mathcal{S}_{2k}$ with respect to~$\mathcal{O}$. We refer to Figure~\ref{fig:slabdiv} for a graphical representation of this construction. Arguing as before, one proves by induction that
	\begin{equation} \label{eq:sigmakest}
		\sigma_k \coloneqq \| \nabla u \|_{L^2(\mathcal{S}_k)} \le k (n+1) (C_\sharp + C_8) \abs{\log\df}^{\frac{1}{2}\left(1-\frac{n}{2}\right)} \quad \mbox{for every } k \in \N.
	\end{equation}
	
	We now show that~$u$ has universally positive energy inside a sufficiently large ball centered at~$\mathcal{O}$. To this aim, using~\eqref{eq:Oinlambdacube} we compute, for~$R_1 \ge \sqrt{n} \, \lambda_0$,
	\begin{align*}
		\int_{\R^n \setminus B_{2R_1}(\mathcal{O})} \,\abs*{\nabla u}^2 \, dx & \le \int_{\R^n \setminus B_{R_1}(0)} \,\abs*{\nabla u}^2 \, dx \le C_1^2 \, \Haus^{n - 1}(\sfera^{n - 1}) \int_{R_1}^{+\infty} \frac{d\rho}{\rho^{n - 1}} \\
		& = \frac{C_1^2 \, \Haus^{n - 1}(\sfera^{n - 1})}{(n - 2) R_1^{n - 2}} \le \frac{1}{2} \, S^2 \mathcal{M}^{\frac{2}{2^\ast}},
	\end{align*}
	where the last inequality holds provided
	\begin{equation*}
		R_1 \ge \mathcal{R} \coloneqq \left( \frac{2 C_1^2 \, \Haus^{n - 1}(\sfera^{n - 1})}{(n - 2) S^2 \mathcal{M}^{\frac{2}{2^\ast}}} \right)^{\!\! \frac{1}{n - 2}} \!.
	\end{equation*}
	Thus, setting~$R_1 \coloneqq \max \left\{\sqrt{n}\,\lambda_0, \mathcal{R} \right\}$ and exploiting Sobolev inequality as well as~\eqref{eq:boundmassa}, we deduce
	\begin{equation*}
		S^2 \mathcal{M}^{\frac{2}{2^\ast}} \leq S^2 \left(\int_{\R^n} u^{2^\ast} dx\right)^{\!\!\frac{2}{2^\ast}} \leq \int_{\R^n} \,\abs*{\nabla u}^2 \, dx.
	\end{equation*}
	As a consequence,
	\begin{equation}
		\label{eq:ener-bolla}
		\int_{B_{2R_1}\left(\mathcal{O}\right)} \,\abs*{\nabla u}^2 \, dx \geq \frac{1}{2} \, S^2 \mathcal{M}^{\frac{2}{2^\ast}}.
	\end{equation}
	
	We are finally in position to prove that~$\mathcal{O}$ is~$O \! \left( |{\log \df}|^{\frac{1}{3} \left( 1 - \frac{n}{2} \right)} \right)$-close to~$T_{\lambda_\star}$. Let~$\mathfrak{s}>0$ be the width of each slab~$\mathcal{S}_k$ constructed above -- i.e.,~$\mathfrak{s} = 2 \, \mathrm{dist} \left( \mathcal{O}, T_{\lambda_\star} \right)$. First, observe that~$\mathfrak{s} < 4R_1$. If not, we would have~$B_{2R_1}(\mathcal{O}) \subseteq \mathcal{S}_1$, which, taking into account~\eqref{eq:step1} and~\eqref{eq:ener-bolla}, is impossible if we take
	\begin{equation*}
		\gamma \le \gamma_6 \coloneqq \exp\left\{-\left( \frac{4 C_8^2}{S^2 \mathcal{M}^{\frac{2}{2^\ast}}} \right)^{\!\! \frac{2}{n - 2}}\right\}.
	\end{equation*}
	It is easy to see that~$B_{2R_1} (\mathcal{O})$ is covered by the first~$k_0 \coloneqq \left\lceil \frac{4 R_1}{\mathfrak{s}} \right\rceil + 1$ slabs~$\mathcal{S}_k$. Using that~$k_0 \le \frac{12 R_1}{\mathfrak{s}}$ and~\eqref{eq:sigmakest}, we deduce
	\begin{equation*}
		\begin{split}
			\int_{B_{2R_1}\left(\mathcal{O}\right)} \,\abs*{\nabla u}^2 \, dx &\leq \sum_{k=1}^{k_0} \sigma_k^2 \leq (n+1)^2 (C_\sharp + C_8)^2 \, \abs{\log\df}^{1-\frac{n}{2}} \sum_{k = 1}^{k_0} k^2 \\
			&\leq \frac{12^3 R_1^3}{\mathfrak{s}^3} \, (n+1)^2 (C_\sharp + C_8)^2 \, \abs{\log\df}^{1-\frac{n}{2}}.
		\end{split}
	\end{equation*}
	Combining this with~\eqref{eq:ener-bolla}, we conclude that
	\begin{equation*}
		\mathrm{dist}\left(\mathcal{O},T_{\lambda_\star}\right)^{3} = \left(\frac{\mathfrak{s}}{2} \right)^{\!\!3} \le \frac{12^3 R_1^3}{4 S^2 \mathcal{M}^{\frac{2}{2^\ast}}} \, (n+1)^2 (C_\sharp + C_8)^2 \, \abs{\log\df}^{1-\frac{n}{2}}.
	\end{equation*}
	Thus, claim~\eqref{eq:claimhyperclose} is established.
	
	Given a direction~$\omega \in \sfera^{n - 1}$, let~$\mu_\star = \mu_\star(\omega) \in \R$ be the unique real number for which~$\mathcal{O} \in T_{\omega, \mu_\star}$, i.e.,~$\mu_\star = \omega \cdot \mathcal{O}$. We now claim that there exists a universal constant~$C_9 > 0$ such that, for every~$\omega \in \sfera^{n - 1}$,
	\begin{equation} \label{eq:2ndmainMPclaim}
		\begin{gathered}
			u(x) - u_{\omega, \lambda}(x) \le C_9 |{\log \df}|^{\frac{1}{3} \left( 1 - \frac{n}{2} \right)} \quad \mbox{for every } x \in \Sigma_{\omega, \lambda} \mbox{ and } \lambda \ge \mu_\star, \\
			\norma*{ u - u_{\omega, \mu_\star} }_{L^\infty(\R^n)} + \norma*{ \nabla (u - u_{\omega, \mu_\star}) }_{L^2(\R^n)}^2 \le C_9 |{\log \df}|^{\frac{1}{3} \left( 1 - \frac{n}{2} \right)}.
		\end{gathered}
	\end{equation}
	Note that this is a version of~\eqref{eq:mainMPclaim} with~$\lambda_\star$ replaced by~$\mu_\star$ and a slightly weaker dependence on~$\df$. By taking advantage of~\eqref{eq:claimhyperclose}, we see that the pointwise inequality on the first line of~\eqref{eq:2ndmainMPclaim} follows easily from that of~\eqref{eq:mainMPclaim} -- trivially in case the first alternative holds in~\eqref{eq:claimhyperclose}, with the help of~\eqref{eq:C1boundonu} if the second one is true. Applying this inequality for both directions~$\omega$ and~$-\omega$, one deduces the validity of the~$L^\infty$-estimate stated on the second line of~\eqref{eq:2ndmainMPclaim} -- here we are taking advantage of the fact that~$\mu_\star(-\omega) = - \mu_\star(\omega)$ and therefore~$u_{-\omega, \mu_\star(-\omega)} = u_{\omega, \mu_\star(\omega)}$ in~$\R^n$. Finally, the~$L^2$-bound for the gradient can be established by testing the equations for~$u$ and~$u_{\omega, \mu_\star}$ against~$u - u_{\omega, \mu_\star}$, changing variables, and using hypotheses~\eqref{eq:ip-f-sub} and~\eqref{eq:decadimento}
	\begin{equation*}
	\begin{split}
		\int_{\R^n} \abs*{\nabla (u-u_{\omega, \mu_\star})}^2 \, dx
		& = 2 \int_{\R^n} \kappa f(u) (u-u_{\omega, \mu_\star}) \, dx \\
		& \le 2 \, C_0^p \, f_0 \, \norma*{ \kappa }_{L^\infty(\R^n)} \, \norma*{ u - u_{\omega, \mu_\star} }_{L^\infty(\R^n)} \int_{\R^n} \frac{dx}{\left(1 + |x|^{n - 2}\right)^p} \\
		& \le C |{\log \df}|^{\frac{1}{3} \left( 1 - \frac{n}{2} \right)},	\end{split}
	\end{equation*}
	for some universal constant~$C > 0$.
		
	We can now prove that~$u$ is almost radially symmetric with respect to~$\mathcal{O}$. Indeed, take~$x,y \in \R^n \setminus \{\mathcal{O}\}$ with~$\abs*{x-\mathcal{O}}=\abs*{y-\mathcal{O}}$ and let~$\omega \in \sfera^{n - 1}$ be a direction parallel to~$y - x$. In this way,~$T_{\omega, \mu_\star}$ is the hyperplane orthogonal to the line segment joining~$x$ and~$y$ passing through their midpoint~$\mathcal{O}$. Thus,~$y=x^{\omega, \mu_\star}$ and the~$L^\infty$-estimate in~\eqref{eq:2ndmainMPclaim} can be read as
	\begin{equation*}
		\label{eq:stima-puntiale-O}
		\abs*{u(x)-u(y)} = \abs*{u(x) - u_{\omega, \mu_\star}(x)} \leq C_9 \abs{\log\df}^{\frac{1}{3}\left(1-\frac{n}{2}\right)}.
	\end{equation*}
	
	Finally, we establish the quantitative symmetry in terms of the~$\mathcal{D}^{1,2}$-norm. Let~$\Theta$ be any rotation with center~$\mathcal{O}$ and write~$u_\Theta(x) \coloneqq u(\Theta x)$ for~$x \in \R^n$. By the Cartan-Dieudonn\'e theorem -- see, e.g.,~\cite[Theorem 4.8.1]{cliff-alg} or~\cite[Theorem 7.1]{q-form} --, there exist~$k \in \{ 1, \ldots, n \}$ directions~$\omega_1, \ldots, \omega_k \in \sfera^{n - 1}$ such that~$\Theta$ can be written as the composition of the reflections across the hyperplanes~$T_{\omega_1, \mu_\star(\omega_1)}, \ldots, T_{\omega_k, \mu_\star(\omega_k)}$. By iterating~$k$ times the~$\mathcal{D}^{1,2}$-estimate in~\eqref{eq:2ndmainMPclaim} we deduce that
	\begin{equation*}
		\norma*{ \nabla (u - u_\Theta) }_{L^2(\R^n)} \le n \, C_9^{\frac{1}{2}} \, |{\log \df}|^{\frac{1}{6} \left( 1 - \frac{n}{2} \right)}.
	\end{equation*}
	
	
	\subsection{Almost monotonicity with respect to $\mathcal{O}$.}
	
	Assuming that~$\kappa \in C^1(\R^n)$, we establish here the validity of~\eqref{eq:quasi-radmon}. To do it, we argue as in~\cite{cicopepo} via a barrier and the weak comparison principle for narrow domains -- the only difference being that, in our case, the domain is an unbounded strip. Up to a rotation with center~$\mathcal{O}$, it suffices to show that
	\begin{equation} \label{eq:step8claim}
		\partial_n u(x) \le C \, \abs{\log\df}^{\frac{1}{3}\left(1-\frac{n}{2}\right)} \quad \mbox{for all } x \in \Sigma_{\mu_\star} \mbox{ with } x' = 0,
	\end{equation}
	for some universal constant~$C > 0$ and where~$\mu_\star = \mu_\star(e_n) = \mathcal{O} \cdot e_n$ is as in~\ref{step:almostradsym} and~$\Sigma_\mu = \Sigma_{e_n, \mu}$ as in~\ref{step:MPstart}.

	Let~$\mu \geq \mu_\star$ be fixed and~$\epsilon > 0$ to be determined later. Define~$N_{\mu,\epsilon} \coloneqq \Sigma_{\mu} \setminus \Sigma_{\mu+\epsilon}$ and note that
	\begin{equation*}
		-\Delta v - \kappa_{\mu} \,c_\mu v = \left(\kappa-\kappa_{\mu}\right) f(u) \eqqcolon f_\mu \quad \text{in}\; N_{\mu,\epsilon},
	\end{equation*}
	where~$v=u-u_{\omega, \mu}$. We estimate
	\begin{equation} \label{eq:est-kcl}
		\norma*{\kappa_{\mu}c_\mu}_{L^\infty(\R^n)} \leq \norma*{\kappa}_{L^\infty(\R^n)} \norma*{f}_{C^{0,1}\left([0,C_0]\right)} \eqqcolon \Gamma
	\end{equation}
	and
	\begin{equation} \label{eq:est-fl}
	\begin{aligned}
		f_\mu(x) &= \big( {\kappa(x',x_n) - \kappa(x',2\mu-x_n)} \big) f(u(x)) \leq 2 f_0 C_0^p \norma*{\partial_n \kappa}_{L^\infty(\R^n)} \left(x_n-\mu\right) \\
		&\leq 2 f_0 C_0^p \norma*{\nabla\kappa}_{L^\infty(\R^n)} \left(x_n-\mu\right) \\
		&\leq 2 f_0 C_0^p \left(\abs{\log\df}^{\frac{1}{3}\left(1-\frac{n}{2}\right)} + \norma*{\nabla\kappa}_{L^\infty(\R^n)}\right) \left(x_n-\mu\right),
	\end{aligned}
	\end{equation}
	for all~$x \in N_{\mu, \varepsilon}$. Consider the barrier given by
	\begin{equation*}
		\label{eq:defbarriera}
		\bar{v}(x) \coloneqq Z \sin \big( {\zeta\left(x_n-\mu\right)} \big),
	\end{equation*}
	for some~$\zeta, Z > 0$ to be specified. Recalling~\eqref{eq:est-kcl}, for every~$x \in N_{\mu,\epsilon}$ we have
	\begin{equation}
		\label{eq:barrier-1}
		-\Delta \bar{v}(x) - \kappa_{\mu}(x) \,c_\mu(x) \bar{v}(x) \geq \left(\zeta^2-\Gamma\right) \bar{v}(x) \geq \bar{v}(x) \geq \frac{2Z\zeta}{\pi} \left(x_n-\mu\right),
	\end{equation}
	for $\zeta^2 \geq \Gamma+1$ and $\zeta\epsilon \leq \pi/2$. Taking into account~\eqref{eq:est-fl}, this implies that
	\begin{equation*}
		\label{eq:barrier-2}
		-\Delta \bar{v} - \kappa_{\mu} \,c_\mu \bar{v} \geq f_\mu \quad \text{in } N_{\mu,\epsilon},
	\end{equation*}
	provided that
	\begin{equation*}
		Z\zeta \geq \pi f_0 C_0^p \left(\abs{\log\df}^{\frac{1}{3}\left(1-\frac{n}{2}\right)} + \norma*{\nabla\kappa}_{L^\infty(\R^n)}\right).
	\end{equation*}

	We take care of the boundary conditions. First, note that~$v=\bar{v}=0$ on~$T_\mu = \partial \Sigma_\mu$, whereas, by~\eqref{eq:2ndmainMPclaim},
	\begin{equation}
		\label{eq:barrier-3}
		v - \bar{v} \leq C_9 \left(\abs{\log\df}^{\frac{1}{3}\left(1-\frac{n}{2}\right)} + \norma*{\nabla\kappa}_{L^\infty(\R^n)}\right) - \frac{2Z\zeta\epsilon}{\pi}\leq 0 \quad\mbox{on } T_{\mu+\epsilon},
	\end{equation}
	provided that
	\begin{equation*}
		Z\zeta\epsilon \geq \frac{C_9\pi}{2} \left(\abs{\log\df}^{\frac{1}{3}\left(1-\frac{n}{2}\right)} + \norma*{\nabla\kappa}_{L^\infty(\R^n)}\right) \!.
	\end{equation*}
	Thus, $\bar{v}-v \geq 0$ on $\partial N_{\mu,\epsilon}$.
	
	Let~$\epsilon_0>0$, depending on~$\norma*{\kappa}_{L^\infty(\R^n)}$ and~$\norma*{f}_{C^{0,1}\left([0,C_0]\right)}$, be so small that the narrow region principle of Corollary~7.4.1 in~\cite{cl-book} holds in~$N_{\mu,\epsilon}$ for every~$\epsilon\in(0,\epsilon_0]$. In light of this, we infer that~$\bar{v} - v \ge 0$ in~$N_{\mu, \varepsilon}$, provided
	\begin{equation}
		\label{eq:cond-liminf}
		\liminf_{x \in N_{\mu,\epsilon}, \,\abs{x}\to+\infty} \left(\bar{v}-v\right)(x) \geq 0.
	\end{equation}
	We now fix the parameters in a way that~\eqref{eq:barrier-1}-\eqref{eq:barrier-3} hold true
	\begin{gather*}
		\zeta \coloneqq \sqrt{\Gamma+1}, \quad \epsilon \coloneqq \min\left\{\frac{\epsilon_0}{2},\frac{\pi}{2\zeta}\right\},\\
		Z \coloneqq \frac{\pi}{\zeta}\max\left\{f_0 C_0^p,\frac{C_9}{2\epsilon}\right\} \left(\abs{\log\df}^{\frac{1}{3}\left(1-\frac{n}{2}\right)} + \norma*{\nabla\kappa}_{L^\infty(\R^n)}\right) \!.
	\end{gather*}
	Moreover, observe that~\eqref{eq:cond-liminf} is trivially true since $v$ decays at infinity and~$\bar{v}\geq0$ in~$N_{\mu, \varepsilon}$. Hence, we deduce that~$\bar{v} - v \ge 0$ in~$N_{\mu, \varepsilon}$, meaning that
	\begin{equation*}
		\begin{split}
			u(x)-u(x^\mu) &= v(x) \leq \bar{v}(x) \\
			&\leq 2 C_{10} \left(\abs{\log\df}^{\frac{1}{3}\left(1-\frac{n}{2}\right)} + \norma*{\nabla\kappa}_{L^\infty(\R^n)}\right) \left(x_n-\mu\right) \quad \text{for every} \; x\in N_{\mu,\epsilon},
		\end{split}
	\end{equation*}
	for some constant~$C_{10} > 0$ depending only on~$n$,~$C_0$,~$L$,~$f_0$,~$\norma*{\kappa}_{L^\infty(\R^n)}$, and~$\norma*{f}_{C^{0,1}\left([0,C_0]\right)}$.
	
	Dividing both sides of the above inequality by~$x_n - \mu$ and letting~$x_n \to \mu^+$, we get
	\begin{equation*}
		\partial_n u(x) \leq C_{10} \left(\abs{\log\df}^{\frac{1}{3}\left(1-\frac{n}{2}\right)} + \norma*{\nabla\kappa}_{L^\infty(\R^n)}\right) \quad\mbox{for all } x\in N_{\mu,\epsilon} \mbox{ and } \mu \geq \mu_\star.
	\end{equation*}
	That is
	\begin{equation*}
		\label{eq:almostmonot}
		\partial_n u(x) \leq C_{10} \left(\abs{\log\df}^{\frac{1}{3}\left(1-\frac{n}{2}\right)} + \norma*{\nabla\kappa}_{L^\infty(\R^n)}\right) \quad \mbox{for all}\; x\in\Sigma_{\mu_\star}.
	\end{equation*}
	By specializing this to the points of the form~$(0,x_n)$ with~$x_n \ge \mu_\star$, claim~\eqref{eq:step8claim} follows.

	
	\section{Proof of Theorem~\ref{th:maintheorem-kelvin}}
	\label{sec:proof3}
	
	\noindent
	In this section, we will refer to a constant as \textit{universal} if it depends only on~$n$,~$C_0$,~$L$, $f_0$,~$\norma*{\kappa}_{L^\infty(\R^n)}$,~$\mu$,~$\nu$, and~$R_0$.
	
	\renewcommand\thesubsection{\bfseries Step \arabic{subsection}}
	
	\subsection{Preliminary observations and introduction of the Kelvin transform.}
	First, observe that, as in~\ref{step:prel-obs} of the proof of Theorem~\ref{th:maintheorem-f}, we can easily deduce that
	\begin{equation}
		\label{eq:C1boundonu-1}
		\norma*{u}_{C^1(\R^n)} \leq C_1
	\end{equation}
	for some constant~$C_1>0$ depending only on~$n$,~$C_0$, and~$\norma*{\kappa}_{L^\infty(\R^n)}$. 
	
	Secondly, we note that it suffices to prove Theorem~\ref{th:maintheorem-kelvin} when~$\defi(\kappa)$ is smaller than a universal constant~$\gamma \in \left( 0, \frac{1}{2} \right]$. Indeed, when~$\defi(\kappa) > \gamma$, inequality~\eqref{eq:alm_sym_thm1} is a trivial consequence of the bound~\eqref{eq:C1boundonu-1} as in~\ref{step:prel-obs} of the proof of Theorem~\ref{th:maintheorem-f}. Therefore, in what follows we will assume that
	\begin{equation*}
		\defi(\kappa) \le \gamma,
	\end{equation*}
	for some small~$\gamma \in \left( 0, \frac{1}{2} \right]$ to be later determined in dependence of universal quantities.
	
	For simplicity of exposition, in what follows we shall suppose~$f$ to be defined in the whole~$[0, +\infty)$ and to satisfy 
		\begin{equation} \label{eq:fextprops}
			\begin{alignedat}{4}
				|f(u)| & \le f_0 u^p && \quad \mbox{for every } u \ge 0, \\
				0 \leq \frac{f(u_2) - f(u_1)}{u_2 - u_1} & \le L u_2^{p - 1} && \quad \mbox{for every } 0 \leq u_1 < u_2, \\
				&&& \hspace{-120pt} \mbox{the function } u \mapsto \frac{f(u)}{u^p} \mbox{ is non-increasing in } (0, +\infty).
			\end{alignedat}
		\end{equation}
	This can be achieved for instance by replacing~$f$ with its continuous extension to~$[0, +\infty)$ that is constant past~$C_0$.
	
	We define the Kelvin transform of~$u$ as
	\begin{equation*}
		\label{eq:def-KT}
		v(x) \coloneqq \mathcal{K}[u](x) = \frac{1}{\abs{x}^{n-2}} \, u\left(\frac{x}{\abs{x}^2}\right) \quad \mbox{for } x \in \R^n \setminus\{0\}.
	\end{equation*}
	It is standard to verify that~$v \in W^{1,2}_{\loc}(\R^n \setminus\{0\}) \cap C^1(\R^n \setminus\{0\})$ weakly solves
	\begin{equation}
		\label{eq:eqv}
		\Delta v + \kappa^\ast g(\cdot, v) = 0 \quad \mbox{in } \R^n \setminus\{0\},
	\end{equation}
	where
	\begin{equation*}
		\kappa^\ast(x) \coloneqq \kappa\left(\frac{x}{\abs{x}^2}\right) \quad \mbox{and} \quad g(x, v) \coloneqq \frac{1}{|x|^{n + 2}} \, f \big( {|x|^{n - 2} v} \big) \quad \mbox{for } x \in \R^n \setminus \{ 0 \}, \, v \ge 0.
	\end{equation*}
	Note that, since~$\kappa \in L^\infty(\R^n)$, then also~$\kappa^\ast \in L^\infty(\R^n)$ with equality of norms and oscillations. Moreover, from~\eqref{eq:fextprops} it easily follows that~$g$ satisfies
	\begin{alignat}{4}
	\label{eq:ggrowth}
		|g(x, v)| & \le f_0 v^p && \quad \mbox{for every } x \in \R^n \setminus \{ 0 \}, \, v \ge 0, \\
		\label{eq:gLipgrowth}
		0 \leq \frac{g(x, v_2) - g(x, v_1)}{v_2 - v_1} & \le L v_2^{p - 1} && \quad \mbox{for every } x \in \R^n \setminus \{ 0 \}, \, 0 \leq v_1 < v_2,
		\end{alignat}
	and, from the third condition in~\eqref{eq:fextprops}, that
	\begin{equation} \label{eq:graddecr}
		g(\cdot, v) \mbox{ is radially symmetric and radially non-increasing, for every } v > 0.
	\end{equation}
	Note that~\eqref{eq:gLipgrowth} implies that the map~$v \mapsto g(\cdot,v)$ is non-decreasing.
	
	We further observe that~$v$ is a weak solution of~\eqref{eq:eqv} in~$\R^n \setminus\{0\}$, it satisfies
	\begin{equation}
		\label{eq:bound-v}
		v(x) \leq \frac{C_0}{\abs{x}^{n-2}} \quad \mbox{for every } x \in \R^n \setminus\{0\},
	\end{equation}
	and, for any direction~$\omega \in \sfera^{n-1}$ and any~$\lambda>0$, it holds
	\begin{equation}
		\label{eq:regular-v}
		v \in L^{2^\ast} \! (\Sigma_{\omega,\lambda}) \cap L^{\infty}(\Sigma_{\omega,\lambda}) \cap C^0(\overline{\Sigma_{\omega,\lambda}}).
	\end{equation}
	Here and in the remainder of the proof, we make use of the notation introduced in~\ref{step:prel-obs} of the proof of Theorem~\ref{th:maintheorem-f}. In addition, we rewrite the left-hand inequality of~\eqref{eq:u-below} as well as the~$C^1$-bound~\eqref{eq:C1boundonu-1} in terms of the Kelvin transform, deducing
	\begin{equation}
		\label{eq:v-below}
		v(x) \geq \frac{1}{C_0} \,\frac{\defi(\kappa)^{\sigma}}{\abs*{x}^\mu} \quad \mbox{for every } x \in B_{1/R_0}(0) \setminus \{0\}
	\end{equation}
	and
	\begin{equation}
		\label{eq:bound-gradv}
		\abs*{\nabla v(x)} \leq C_2 \, \frac{1 + |x|}{|x|^n} \quad \mbox{for every } x \in \R^n \setminus \{0\},
	\end{equation}
	for some~$C_2 \ge 1$ depending on universal constants.
	
	By arguing as in~\cite{esp-fs}, we deduce that, for every direction~$\omega \in \sfera^{n-1}$ and every~$\lambda>0$,
	\begin{equation*}
		(v-v_{\omega,\lambda})_+ \in L^{2^\ast} \! (\Sigma_{\omega,\lambda}) \quad \mbox{and} \quad \nabla (v-v_{\omega,\lambda})_+ \in L^{2}(\Sigma_{\omega,\lambda}),
	\end{equation*}
	with
	\begin{equation*}
		\begin{aligned}
			\label{eq:sobolev-w}
			\norma*{(v-v_{\omega,\lambda})_+}^2_{L^{2^\ast}\!(\Sigma_{\omega,\lambda})} & \leq S^{-2} \int_{\Sigma_{\omega,\lambda}} \abs*{\nabla(v-v_{\omega,\lambda})_+}^2 \, dx \\
			& \leq 4 f_0 S^{-2} \| \kappa \|_{L^\infty(\R^n)} \norma*{v}^{2^\ast}_{L^{2^\ast}\!(\Sigma_{\omega,\lambda})}.
		\end{aligned}
	\end{equation*}
	To obtain the second inequality here, one essentially needs to test equation~\eqref{eq:eqv} and the one for~$v_{\omega, \lambda}$ against~$(v - v_{\omega, \lambda})_+$ times a sequence of cutoffs encoding the fact that the reflection of the origin across~$T_{\omega, \lambda}$ -- the singular set in~$\Sigma_{\omega, \lambda}$ -- has zero capacity. See the proof of Lemma~4.3 in~\cite{esp-fs} for the details of this computation.
	
	Our goal in Steps~2-5 is to show that for to every~$\omega \in \sfera^{n - 1}$ we have
	\begin{equation}
	\label{eq:mainclaim-Kelvin}
		\abs*{v(x)-v_{\omega,0}(x)} \leq  C_\flat \defi(\kappa)^{\alpha_\flat} \quad \mbox{for every } x \in \R^n \setminus B_{r_\flat}(0),
	\end{equation}
	with
	\begin{equation}
	\label{eq:rflatdef}
		r_\flat \coloneqq C_\flat \defi(\kappa)^{\frac{\alpha_\flat}{n}},
	\end{equation}	
	for some universal constants~$C_\flat \ge 1$ and~$\alpha_\flat \in (0, 1)$, provided~$\gamma$ is smaller than a universal constant~$\gamma_{\flat} \in \left( 0, \frac{1}{2} \right]$. Note that~$r_\flat$ goes to zero as~$\defi(\kappa) \rightarrow 0$.
	
	
	\subsection{Starting the moving planes procedure.} \label{step:MPstart-kelvin}
	
	Of course, after a rotation it suffices to verify~\eqref{eq:mainclaim-Kelvin} for~$\omega = e_n$. Under this assumption, we drop any reference to the dependence on~$\omega$ and adopt the notation of~\ref{step:MPstart} in the proof of Theorem~\ref{th:maintheorem-f}.
	
	We show here that the moving planes method can be started. To this aim, we first claim that
	\begin{equation} \label{eq:L2starboundMPbegins}
		\norma*{(v-v_\lambda)_+}_{L^{2^\ast} \! (\Sigma_{\lambda})} \leq C_3 \defi(\kappa) \quad \mbox{for every } \lambda \ge \lambda_0,
	\end{equation}
	for some constants~$C_3, \lambda_0 \ge 1$ depending only on~$n$,~$C_0$, and~$\| \kappa \|_{L^\infty(\R^n)}$.
	
	Our strategy to obtain~\eqref{eq:L2starboundMPbegins} is similar to the one of the aforementioned~\ref{step:MPstart} -- the only difference lying in the need to avoid the singularity. To do so, we argue as in Step~1 of the proof of Theorem~1.5 in~\cite{esp-fs} and test both equations for~$v$ and~$v_\lambda$ against the function~$\psi_\varepsilon^2 \varphi_R^2 (v - v_\lambda)_+ \chi_{\Sigma_\lambda}$. Here,~$\varphi_R$ is a standard cutoff function acting between the balls~$B_R(0)$ and~$B_{2 R}(0)$ with~$R > 2 \lambda$ -- needed to deal with the unboundedness of~$\Sigma_\lambda$ --, while the family~$\{\psi_\varepsilon\}_\epsilon \subseteq C^{0, 1}(\R^n; [0, 1])$ encodes, as~$\varepsilon \rightarrow 0^+$, the capacity of~$\{0^\lambda\}$ being zero -- see the proof of~\cite[Lemma~4.3]{esp-fs} for more details. We get
	\begin{equation*}
		\int_{\Sigma_{\lambda}} \abs*{\nabla (v-v_\lambda)_+}^2 \psi_\epsilon^2 \,\varphi_R^2 \, dx = I_1 + I_2 + I_3,
	\end{equation*}
	where, for~$\lambda > 0$ large,
		\begin{align*}
			I_1 & \coloneqq -2 \int_{\Sigma_{\lambda}} \nabla (v-v_\lambda)_+ \cdot \nabla\psi_\epsilon (v-v_\lambda)_+ \psi_\epsilon \,\varphi_R^2 \, dx, \\
			I_2 & \coloneqq -2 \int_{\Sigma_{\lambda}} \nabla (v-v_\lambda)_+ \cdot \nabla\varphi_R (v-v_\lambda)_+ \psi_\epsilon^2 \,\varphi_R \, dx, \\
			I_3 & \coloneqq \int_{\Sigma_{\lambda}} \big( {\kappa^\ast G - \kappa^\ast_\lambda G_\lambda} \big) (v-v_\lambda)_+ \psi_\epsilon^2 \,\varphi_R^2 \, dx,
		\end{align*}
		with
		\begin{equation*}
			G(x) \coloneqq g(x, v(x)) \quad \mbox{and} \quad G_\lambda(x) = g(x^\lambda, v(x^\lambda)) = g(x^\lambda, v_\lambda(x)) \quad \mbox{for } x \in \Sigma_\lambda.
		\end{equation*}
	We estimate~$I_1$ and~$I_2$ as in the proof of Lemma~4.3 in~\cite{esp-fs}, obtaining
	\begin{align*}
		|I_1| & \le \frac{1}{4} \int_{\Sigma_{\lambda}} \abs*{\nabla (v-v_\lambda)_+}^2 \psi_\epsilon^2 \,\varphi_R^2 \, dx + 16 \varepsilon \| v \|_{L^\infty(\Sigma_\lambda)}, \\
		|I_2| & \le \frac{1}{4} \int_{\Sigma_{\lambda}} \abs*{\nabla (v-v_\lambda)_+}^2 \psi_\epsilon^2 \,\varphi_R^2 \, dx + 64 |B_1|^{\frac{2}{n}} \left( \int_{\Sigma_\lambda \setminus B_R(0)} v^{2^\ast} dx \right)^{\!\! \frac{2}{2^\ast}} \!.
	\end{align*}
	On the other hand, using~\eqref{eq:graddecr} -- since~$|x| \geq |x^\lambda|$ --,~\eqref{eq:ggrowth}, and~\eqref{eq:gLipgrowth}, we have that
		\begin{align*}
			& \kappa^*(x) G(x) - \kappa_\lambda^*(x) G_\lambda(x) = \kappa^*(x) g(x, v(x)) - \kappa^*(x^\lambda) g(x^\lambda, v_\lambda(x)) \\
			& \hspace{40pt} \le \kappa^*(x) g(x, v(x)) - \kappa^*(x^\lambda) g(x, v_\lambda(x)) \\
			& \hspace{40pt} = \left( \kappa^*(x) - \kappa^*(x^\lambda) \right) g(x, v(x)) + \kappa^*(x^\lambda) \left( g(x, v(x)) - g(x, v_\lambda(x)) \right) \\
			& \hspace{40pt} \le f_0 \, \mbox{osc}(\kappa^*) v(x)^p + L \| \kappa^* \|_{L^\infty(\R^n)} v(x)^{p - 1} \left(v(x) - v_\lambda(x)\right)_+
		\end{align*}
		for every~$x \in \Sigma_\lambda \setminus \{ 0^\lambda \}$. Then, using H\"older and Sobolev inequalities,
		\begin{equation*}
			\begin{split}
				I_3 & \le f_0 \, \mathrm{osc}(\kappa^*) \int_{\Sigma_{\lambda}} v^p (v-v_\lambda)_+ \psi_\epsilon^2 \,\varphi_R^2 \, dx + L \| \kappa^* \|_{L^\infty(\R^n)} \int_{\Sigma_{\lambda}} v^{p - 1} (v-v_\lambda)_+^2 \psi_\epsilon^2 \,\varphi_R^2 \, dx \\
				& \le f_0 \defi(\kappa) \| v \|_{L^{2^\ast} \! (\Sigma_\lambda)}^p \| (v - v_\lambda)_+ \|_{L^{2^\ast} \! (\Sigma_\lambda)} + L \| \kappa \|_{L^\infty(\R^n)} \| v \|_{L^{2^\ast} \! (\Sigma_\lambda)}^{p - 1} \| (v - v_\lambda)_+ \|_{L^{2^\ast} \! (\Sigma_\lambda)}^2 \\
				& \le f_0 S^{-1} \defi(\kappa) \| v \|_{L^{2^\ast} \! (\Sigma_\lambda)}^p \| \nabla (v - v_\lambda)_+ \|_{L^2(\Sigma_\lambda)} \\
				& \quad + L S^{-2} \| \kappa \|_{L^\infty(\R^n)} \| v \|_{L^{2^\ast} \! (\Sigma_\lambda)}^{p - 1} \| \nabla (v - v_\lambda)_+ \|_{L^2 (\Sigma_\lambda)}^2.
			\end{split}
		\end{equation*}
	Piecing these three estimates together and letting~$\epsilon \to 0^+$ and~$R \to +\infty$, by Fatou's lemma and recalling~\eqref{eq:regular-v}, we deduce
	\begin{equation}
	\label{eq:v-vl-toest}
		\begin{split}
				& \left( 1 - \frac{2 L}{S^2} \, \| \kappa \|_{L^\infty(\R^n)} \| v \|_{L^{2^\ast} \! (\Sigma_\lambda)}^{p - 1} \right) \norma*{\nabla (v-v_\lambda)_+}^2_{L^{2}(\Sigma_{\lambda})} \\
				& \hspace{80pt} \leq 2 f_0 S^{-1} \defi(\kappa) \| v \|_{L^{2^\ast} \! (\Sigma_\lambda)}^p \| \nabla (v - v_\lambda)_+ \|_{L^2(\Sigma_\lambda)} ,
		\end{split}
	\end{equation}
	which resembles~\eqref{eq:est-1-def}. Then, we take advantage of the bound~\eqref{eq:bound-v} and of the computation~\eqref{eq:tailubound} to see that 
	\begin{equation*} \label{eq:L2starestforv}
		\norma*{v}^{p - 1}_{L^{2^\ast}\!(\Sigma_{\lambda})} \leq \frac{S^2}{4 L \,\norma*{\kappa}_{L^\infty(\R^n)}},
	\end{equation*}
	provided~$\lambda$ is larger that
	\begin{equation*}
		\widehat{\lambda}_0 \coloneqq \frac{\sqrt{ 4 L \norma*{\kappa}_{L^\infty(\R^n)}}}{S} \left( \frac{\Haus^{n - 1}(\sfera^{n - 1}) \, C_0^{2^\ast}}{n} \right)^{\!\! \frac{1}{n}} \!.
	\end{equation*}
	Hence, for~$\lambda \ge \widehat{\lambda}_0$, inequality~\eqref{eq:v-vl-toest} gives that
	\begin{equation}
		\label{eq:est-toreach}
		\norma*{\nabla (v-v_\lambda)_+}^2_{L^{2}(\Sigma_{\lambda})} \leq C \defi(\kappa) \,\norma*{\nabla (v-v_\lambda)_+}_{L^{2}(\Sigma_{\lambda})}
	\end{equation}
	for some constant~$C \ge 1$ depending only on~$n$,~$L$,~$f_0$, and~$\| \kappa \|_{L^\infty(\R^n)}$. To achieve~\eqref{eq:L2starboundMPbegins}, we may clearly suppose without loss of generality that
	\begin{equation*}
		\label{eq:grad>0}
		\norma*{\nabla (v-v_\lambda)_+}_{L^{2}(\Sigma_{\lambda})}>0.
	\end{equation*}
	Under this assumption, inequality~\eqref{eq:est-toreach} simplifies to
	\begin{equation*}
		\norma*{\nabla (v-v_\lambda)_+}_{L^{2}(\Sigma_{\lambda})} \leq C \defi(\kappa) \quad \mbox{for every } \lambda \ge \widehat{\lambda}_0,
	\end{equation*}
	which immediately leads to~\eqref{eq:L2starboundMPbegins}, with~$\lambda_0 \coloneqq \max \big\{ {\widehat{\lambda}_0, 1} \big\}$, after an application of Sobolev inequality.
	
	From the integral estimate~\eqref{eq:L2starboundMPbegins}, we plan to deduce a pointwise bound for~$\left(v-v_\lambda\right)_+$. Let~$\lambda \geq \lambda_0$ be fixed and observe that, thanks to~\eqref{eq:ggrowth} and~\eqref{eq:graddecr},
	\begin{equation*}
	\label{eq:eq-differenza-v}
		-\Delta (v-v_{\lambda}) \le \kappa^\ast c_{\lambda} (v-v_{\lambda}) + f_0 \defi(\kappa) \, v_\lambda^p \quad \mbox{in } \Sigma_{\lambda} \setminus \{0^\lambda\},
	\end{equation*}
	where
	\begin{equation}
	\label{eq:clambdadefkelv}
		c_{\lambda}(x) \coloneqq
		\begin{dcases}
			\frac{g(x, v(x)) - g(x, v(x^{\lambda}))}{v(x)-v(x^{\lambda})}	& \quad \mbox{if } v(x) \neq v(x^{\lambda}), \\
			0								& \quad \mbox{if } v(x) = v(x^{\lambda}).
		\end{dcases}
	\end{equation}
	Note, as of now, that~$c_{\lambda} \geq 0$ by the monotonicity of~$g$. Due to the presence of the singularity~$0^\lambda$, we cannot argue exactly as in~\ref{step:int2point} of the proof of Theorem~\ref{th:maintheorem-f}. To circumvent this issue, we use a weak version of Kato's inequality -- see~\cite[Proposition~3.2]{cleon} -- and deduce that
	\begin{equation}
	\label{eq:eq-differenza-v+}
		-\Delta (v-v_{\lambda})_+ \leq \kappa^\ast c_{\lambda} (v-v_{\lambda})_+ + f_0 \defi(\kappa) \, v^p \chi_{\{v>v_\lambda\}} \eqqcolon h_\lambda \quad \mbox{in } \Sigma_{\lambda} \setminus \{ 0^\lambda \},
	\end{equation}
	in the sense that
	\begin{equation} \label{eq:eq-differenza-v+weak}
		\int_{\Sigma_\lambda} \nabla (v-v_{\lambda})_+ \cdot \nabla \varphi \, dx \le \int_{\Sigma_\lambda} h_\lambda \varphi \, dx,
	\end{equation}
	for every non-negative function~$\varphi \in C^\infty_c(\Sigma_\lambda \setminus \{ 0^\lambda \})$. We now observe that, by~\eqref{eq:bound-v} and~\eqref{eq:v-below}, there exists a small radius~$r_0 \in (0, 1)$, depending on~$n$,~$C_0$,~$\mu$,~$\sigma$,~$R_0$, and also on~$\defi(\kappa)$, such that
	\begin{equation}
		\label{eq:0-neighb}
		\left(v-v_{\lambda}\right)_+ = 0 \quad \mbox{in } B_{r_0}(0^\lambda) \setminus \{0^\lambda\}.
	\end{equation}
	From this it is easy to see that~\eqref{eq:eq-differenza-v+} actually holds weakly in the whole~$\Sigma_\lambda$, i.e., that~\eqref{eq:eq-differenza-v+weak} is true for every non-negative~$\varphi \in W^{1,2}(\Sigma_\lambda)$ with compact support. Indeed, take first~$\varphi \in C^\infty_c(\Sigma_\lambda)$ and let~$\eta_\epsilon$ be a smooth cutoff satisfying~$0 \leq \eta_\epsilon \leq 1$ in~$\R^n$,~$\eta_\epsilon=0$ in~$B_\epsilon (0^\lambda)$, and~$\eta_\epsilon=1$ outside of~$B_{2\epsilon} (0^\lambda)$, for~$\varepsilon \in \left(0, 1/2 \right)$. Since~$\varphi \eta_\epsilon \in C^\infty_c\left(\Sigma_{\lambda}\right)$ vanishes in a neighborhood of~$0^\lambda$, it can be used as test function in~\eqref{eq:eq-differenza-v+weak} to get
	\begin{equation*}
		\int_{\Sigma_{\lambda}} \,\eta_\epsilon \nabla (v-v_{\lambda})_+ \cdot \nabla\varphi \, dx + \int_{B_{2\epsilon}(0^\lambda) \setminus B_{\epsilon}(0^\lambda)} \varphi \, \nabla (v-v_{\lambda})_+ \cdot \nabla\eta_\epsilon \, dx \leq \int_{\Sigma_{\lambda}} h_\lambda \varphi \eta_\epsilon \, dx.
	\end{equation*}
	For~$\epsilon$ sufficiently small, we have that~$B_{2\epsilon}(0^\lambda) \setminus B_{\epsilon}(0^\lambda) \subseteq B_{r_0}(0^\lambda) \setminus \{0^\lambda\}$. By this and~\eqref{eq:0-neighb}, the second integral on the left-hand side vanishes, while in the first one the integrand can actually be taken to be~$\nabla (v-v_{\lambda})_+ \cdot \nabla\varphi$. Therefore, by estimating the right-hand side -- observe that~$h_\lambda \ge 0$ in~$\Sigma_\lambda$ --, we conclude that~\eqref{eq:eq-differenza-v+weak} holds for every non-negative~$\varphi \in C^\infty_c(\Sigma_\lambda)$. By density,~\eqref{eq:eq-differenza-v+weak} extends to every non-negative function~$\varphi \in W^{1,2}(\Sigma_\lambda)$ with compact support -- here, we are using that~$(v - v_\lambda)_+ \in W^{1,2}(\Sigma_\lambda \cap B_R(0))$ for every~$R > 0$, which holds true thanks to~\eqref{eq:0-neighb}.
	
	We now rewrite~\eqref{eq:eq-differenza-v+} in the form
	\begin{equation*}
		\Delta (v-v_{\lambda})_+ + \kappa^\ast c_{\lambda} \chi_{\{v>v_\lambda\}} (v-v_{\lambda})_+ \geq - f_0 \defi(\kappa) \, v^p \chi_{\{v>v_\lambda\}} \quad \mbox{weakly in } \Sigma_\lambda.
	\end{equation*}
	We proceed as in~\ref{step:int2point} of the proof of Theorem~\ref{th:maintheorem-f} and estimate, using~\eqref{eq:gLipgrowth},
	\begin{align*}
		\| \kappa^\ast c_{\lambda}\chi_{\{v>v_\lambda\}} \|_{L^\infty(\Sigma_{\lambda})} & \leq L \,\norma{\kappa^\ast}_{L^\infty(\R^n)} \,\norma{v}^{p-1}_{L^\infty(\Sigma_{\lambda_0})} \le L \, C_0^{p - 1} \,\norma{\kappa}_{L^\infty(\R^n)}, \\
		\| {f_0 \defi(\kappa) \,v^p \chi_{\{v>v_\lambda\}}} \|_{L^n(\Sigma_{\lambda})} & \leq f_0 \defi(\kappa) \,\norma*{v}^p_{L^{n p}(\Sigma_{\lambda_0})} \le f_0 C_0^p \, \Haus^{n - 1}(\sfera)^{\frac{1}{n}} \defi(\kappa),
	\end{align*}
	thanks to~\eqref{eq:bound-v} and the fact that~$\lambda_0 \ge 1$. Applying Theorems~8.17 and~8.25 of~\cite{gt} and recalling~\eqref{eq:L2starboundMPbegins}, we conclude that
	\begin{equation}
		\label{eq:poinbound-v+}
		\| (v-v_\lambda)_+ \|_{L^\infty(\Sigma_\lambda)} \leq C_4 \defi(\kappa) \quad \mbox{for all } \lambda \ge \lambda_0,
	\end{equation}
	for some constant~$C_4 \geq 1$ depending only on~$n$,~$C_0$,~$L$,~$f_0$, and~$\norma*{\kappa}_{L^\infty(\R^n)}$.
	
	Define
	\begin{equation*}
		\Lambda \coloneqq \Big\{ {\lambda \geq 0 \, \big\lvert \, \| (v-v_\mu)_+ \|_{L^\infty(\Sigma_\mu)} \leq C_4 \defi(\kappa) \,\, \mbox{ for every } \mu \geq \lambda} \Big\}.
	\end{equation*}
	Estimate~\eqref{eq:poinbound-v+} shows that~$\Lambda \neq \varnothing$ and that, in particular,~$[\lambda_0, +\infty) \subseteq \Lambda$. Thus,
	\begin{equation*}
		\lambda_\star \coloneqq \inf\Lambda
	\end{equation*}
	is a well-defined real number belonging to~$[0,\lambda_0]$. In the following two steps, we shall prove that
	\begin{equation*}
		\lambda_\star \leq C \defi(\kappa)^\alpha,
	\end{equation*}
	for some universal constants~$\alpha>0$ and~$C \geq 1$. To do it, we argue by contradiction and therefore suppose that
	\begin{equation} \label{eq:lambdastar>Bdefalpha}
		\lambda_\star > \lambda_1 \coloneqq \mathcal{B} \defi(\kappa)^\alpha,
	\end{equation}
	for some~$\alpha>0$ and~$\mathcal{B} \geq 1$ to be later determined. Note that~$\lambda_1 \in \left( 0, \frac{1}{8} \right]$, provided we take
	\begin{equation*}
		\gamma \le \gamma_1 \coloneqq (8 \mathcal{B})^{-\frac{1}{\alpha}}.
	\end{equation*}
	
	
	\subsection{Finding and propagating positivity.}
	\label{step:find-pos}
	
	Let~$r \geq \lambda_0+1$ to be later determined in terms of universal quantities. Clearly, we have that~$B_{r}(0) \cap \Sigma_{\lambda_\star} \neq \varnothing$. Take
	\begin{equation} \label{eq:deltaspec}
		\delta \in \left( 0, \frac{\lambda_1}{4} \right] \!,
	\end{equation}
	with~$\lambda_1$ as in~\eqref{eq:lambdastar>Bdefalpha} and define
	\begin{equation}
	\label{eq:defK2}
		K_\delta \coloneqq \overline{B_{r}(0) \cap \Sigma_{\lambda_\star+ 2 \delta}}.
	\end{equation}
	The set~$K_\delta$ is a compact subset of~$\Sigma_{\lambda_\star}$, containing~$0^{\lambda_\star}$ and disjoint from the hyperplane~$T_{\lambda_\star}$ where~$v-v_{\lambda_\star}$ vanishes. We shall show that~$v_{\lambda_\star}-v$ is strictly positive in a small punctured ball centered at~$0^{\lambda_\star}$ and then propagate this information to the whole~$K_\delta \setminus \{0^{\lambda_\star}\}$.
	
	First, we identify this initial ball. Take a radius
	\begin{equation} \label{r1specs}
		r_1 \in \left( 0, \min \left\{ \frac{\lambda_1}{4}, \left(\frac{ \defi(\kappa)^{\sigma}}{2 C_0^2}\right)^{\!\! \frac{1}{\mu}} \! \lambda_1^{\frac{n-2}{\mu}}, \frac{1}{R_0} \right\} \right] \!.
	\end{equation}
	By this choice,~$B_{2 r_1}(0^{\lambda_\star}) \subseteq \Sigma_{\lambda_\star}$. Furthermore, by exploiting~\eqref{eq:bound-v} and~\eqref{eq:v-below}, for every~$x \in B_{r_1}(0^{\lambda_\star}) \setminus \{ 0^{\lambda_\star} \}$ we get
	\begin{equation} \label{eq:posballforvlambda-v}
		v_{\lambda_\star}(x) - v(x) \geq \frac{1}{C_0} \frac{\defi(\kappa)^{\sigma}}{r_1^\mu} - \frac{C_0}{(2 \lambda_1 - r_1)^{n-2}} \ge \frac{1}{C_0}\frac{\defi(\kappa)^{\sigma}}{r_1^\mu} - \frac{C_0}{\lambda_1^{n-2}} \ge \frac{C_0}{\lambda_1^{\mu}} \ge 1,
	\end{equation}
	where in the last inequality we used that~$C_0 \ge 1$ and~$\lambda_1 \le 1$.
	
	We now proceed to propagate the positivity of~$v_{\lambda_\star} - v$ elsewhere in~$K_\delta$. To this aim, we consider the function~$w \coloneqq \min \{ v_{\lambda_\star}-v+2C_4 \defi(\kappa), 1 \}$. Note that~\eqref{eq:posballforvlambda-v} gives
	\begin{equation} \label{w=qinBr1}
		w = 1 \quad \mbox{in } B_{r_1}(0^{\lambda_\star}).
	\end{equation}
	By arguing as in the second part of~\ref{step:MPstart-kelvin}, it is not hard to see that~$w$ weakly solves
	\begin{equation}
	\label{eq:perRemark}
		\Delta w \leq \defi(\kappa) \big( {C_4 \, \kappa_{\lambda_\star}^\ast c_{\lambda_\star} + f_0 v^p} \big) \chi_{\{w < 1\}} \eqqcolon h \quad \mbox{in } \Sigma_{\lambda_\star}.
	\end{equation}
	
	Let~$y \in K_\delta$ and~$\varrho \in [\delta, 1]$ be such that~$B_{2 \varrho}(y) \subseteq \Sigma_{\lambda_\star}$. Using the upper bound~\eqref{eq:bound-v} along with the fact that~$0 \le c_{\lambda_\star} \chi_{\{w < 1\}} \le p \max \{ v, v_{\lambda_\star} \}^{p - 1} \chi_{\{ v_{\lambda_\star} < v + 1 \}} \le p \, (v + 1)^{p - 1}$, we compute
	\begin{align*}
		\| h \|_{L^n(B_{2 \varrho}(y))} & \le 2 \varrho \, |B_1|^{\frac{1}{n}} \defi(\kappa) \left\{ \frac{f_0 \, C_0^p}{\lambda_\star^{n + 2}} + p \, C_4 \| \kappa \|_{L^\infty(\R^n)} \left( \frac{C_0}{\lambda_\star^{n - 2}} + 1 \right)^{\! p - 1} \right\} \\
		& \le \frac{2^p p \, |B_1|^{\frac{1}{n}} \left(C_0 + 1\right)^p  C_4  \left( {1 + \| \kappa \|_{L^\infty(\R^n)}} \right) \left(1+f_0\right)}{\lambda_1^{n + 2}} \defi(\kappa) \\
		& = 2^p p \, |B_1|^{\frac{1}{n}} \left(C_0 + 1\right)^p C_4 \left( {1 + \| \kappa \|_{L^\infty(\R^n)}}\right) \left(1+f_0\right) \frac{\defi(\kappa)^{1 - (n + 2)\alpha}}{\mathcal{B}^{n + 2}}.
	\end{align*}
	By the weak Harnack inequality, we then get that
	\begin{equation*}
		\label{eq:wHarnack-2}
		\left( \, \dashint_{B_\varrho(y)} w^s \,dx\right)^{\!\! \frac{1}{s}} \leq C \left(\inf_{B_{\varrho}(y)} w + \frac{\defi(\kappa)^{1 - (n + 2)\alpha}}{\mathcal{B}^{n + 2}} \right) \!,
	\end{equation*}
	for every~$s \in (0, 2^\ast / 2)$ and for some constant~$C \ge 1$ depending only on~$n$,~$C_0$,~$f_0$, $\| \kappa \|_{L^\infty(\R^n)}$, and~$s$. Let~$x \in K_\delta$. By iterating this inequality -- with, say,~$s = 2^\ast / 4$ -- along a chain of balls connecting~$B_{r_1}(0^{\lambda_\star})$ with~$B_\delta(x)$, we obtain that
	\begin{equation} \label{eq:iterated_wHi}
		\left( \, \dashint_{B_{r_1}(0^{\lambda_\star})} w^{\frac{2^\ast}{4}} \,dx\right)^{\!\! \frac{4}{2^\ast}} \le C_5^{1 + 2 r} \delta^{-\beta} r_1^{-\beta} \left( \inf_{B_\delta(x)} w + \frac{\defi(\kappa)^{1 - (n + 2)\alpha}}{\mathcal{B}^{n + 2}} \right) \!,
	\end{equation}
	for some constants~$C_5 \ge 1$ and~$\beta > 0$ depending only on~$n$,~$C_0$,~$L$,~$f_0$, and~$\norma*{\kappa}_{L^\infty(\R^n)}$. This can be achieved with the following 3-step procedure:
	\begin{enumerate}[leftmargin=*,label=$(\alph*)$]
		\item first, we connect~$B_{r_1}(0^{\lambda_\star})$ to the ball~$B_1(0^{\lambda_\star} + 2 e_n)$ via a finite sequence of linearly enlarging balls, as in the proof of Lemma~2.2 in~\cite{cicopepo} -- this produces the factor~$r_1^{-\beta}$;
		\item then, we join~$B_1(0^{\lambda_\star} + 2 e_n)$ and~$B_1(x + 2 e_n)$ through a standard chain of balls of radius~$1$ -- this produces an exponential factor~$C_5^{|x - 0^{\lambda_\star}|}$, controlled by~$C_5^{2 r}$;
		\item \label{3-step_c} finally, we go from~$B_1(x + 2 e_n)$ to~$B_\delta(x)$ via linearly shrinking balls, producing the factor~$\delta^{-\beta}$.
	\end{enumerate}
	By taking advantage of~\eqref{w=qinBr1}, we infer from~\eqref{eq:iterated_wHi} that
	\begin{equation*}
		\inf_{B_\delta(x)} w \ge \frac{(\delta r_1)^{\beta}}{C_5^{1 + 2 r} } - \frac{\defi(\kappa)^{1-\left(n+2\right)\alpha}}{\mathcal{B}^{n + 2}}.
	\end{equation*}
	Since this holds for every~$x \in K_\delta$, by taking
	\begin{equation} \label{eq:hugeconstdefs}
		\begin{aligned}
			\delta \coloneqq & \defi(\kappa)^{4 n^2 \alpha}, \\
			r_1 \coloneqq & \,  \left(\frac{ \defi(\kappa)^{\sigma}}{2 C_0^2}\right)^{\!\! \frac{1}{\mu}} \! \lambda_1^{\frac{n-2}{\mu}} =  \left(\frac{\mathcal{B}^{n-2}}{2 C_0^2}\right)^{\!\! \frac{1}{\mu}} \defi(\kappa)^{\frac{\sigma+(n-2)\alpha}{\mu}}, \\
			\alpha \coloneqq & \,\frac{1}{4 n^2 \beta+\left(n-2\right)\mu^{-1} \beta+n+2}, \\
			\sigma \le & \, \frac{\mu}{2 \beta}, \\
			\mathcal{B} \coloneqq & \, \left( 2 C_0^2 \right)^{\! \frac{1}{n - 2}} \left( 4 C_5^{1 + 2 r}\right)^{\!\frac{\mu}{\beta\left(n-2\right)}} \!, 
		\end{aligned}
	\end{equation}
	we obtain that
	\begin{equation*}
		\min_{K_\delta} w \ge \defi(\kappa)^{1-\left(n+2\right)\alpha}.
	\end{equation*}
	Observe that~$r$ will be chosen later in dependence of universal quantities. Moreover, note that our choices for~$\delta$ and~$r_1$ are in compliance with~\eqref{eq:deltaspec} and~\eqref{r1specs} respectively, once we require that
	\begin{equation*}
		\gamma \le \gamma_2 \coloneqq \min \left\{ 4^{-\frac{1}{\left(4n^2-1\right)\alpha}}, \left(\frac{\mathcal{B}^{\mu+2-n}}{4^\mu}\right)^{\!\frac{1}{\sigma+\left(n-2-\mu\right)\alpha}}, \left(\frac{1}{\mathcal{B}^{n-2} R_0^\mu}\right)^{\!\frac{1}{\sigma+\left(n-2\right)\alpha}} \right\} \!.
	\end{equation*}
	From the above estimate and the definition of~$w$, we easily conclude that
	\begin{equation} \label{eq:posit-totransf}
		v_{\lambda_\star}(x) - v(x) \ge \frac{1}{2} \defi(\kappa)^{1-\left(n+2\right)\alpha} \quad \mbox{for every } x \in K_\delta \setminus \{ 0^{\lambda_\star} \},
	\end{equation}
	provided
	\begin{equation*}
		\gamma \le \gamma_3 \coloneqq \left( \frac{1}{4 C_4} \right)^{\!\!\frac{1}{\left(n+2\right)\alpha}} \!.
	\end{equation*}
	
	
	\subsection{Deriving a contradiction.}
	
	For~$\epsilon \in (0, \lambda_1)$, define
	\begin{equation*}
		E_r^\epsilon = \left(\R^n \setminus B_r(0)\right) \cap \Sigma_{\lambda_\star-\epsilon} \quad \mbox{and} \quad S_\delta^\epsilon = B_r(0) \cap \left(\Sigma_{\lambda_\star-\epsilon} \setminus \overline{\Sigma_{\lambda_\star+2\delta}} \right) \!.
	\end{equation*}
	Note that~$E_r^\epsilon$,~$K_\delta$, and~$S_\delta^\epsilon$ form a partition of~$\Sigma_{\lambda_\star-\epsilon}$, up to sets of measure zero -- recall that~$K_\delta$ is defined in~\eqref{eq:defK2}. In order to reach a contradiction, we first observe that, by~\eqref{eq:posit-totransf}, the continuity of~$v$ outside of the origin, and the fact that~$\lim_{x \rightarrow 0} v(x) = +\infty$, there exists a small~$\varepsilon_1 > 0$ such that~$0^{\lambda_\star - \varepsilon} \in K_\delta$ and
	\begin{equation} \label{eq:vlambdastar-eps-vge0}
		v_{\lambda_\star - \varepsilon}(x) - v(x) \ge 0 \quad \mbox{for every } x \in K_\delta \setminus \{ 0^{\lambda_\star - \varepsilon} \},
	\end{equation}
	for every~$\varepsilon \in (0, \varepsilon_1]$. We now test the equation for~$v-v_{\lambda_\star-\epsilon}$ against~$(v - v_{\lambda_\star - \varepsilon})_+ \chi_{\Sigma_{\lambda_\star - \varepsilon}}$ times a suitable cutoff. Arguing as in~\ref{step:MPstart-kelvin}, we deduce that
	\begin{equation} \label{eq:asinstep2}
		\begin{split}
			\int_{\Sigma_{\lambda_\star-\epsilon}} \,\abs*{\nabla (v-v_{\lambda_\star-\epsilon})_+}^2 \, dx &\leq 2 f_0 \defi(\kappa) \int_{\Sigma_{\lambda_\star-\epsilon}} v^p
			(v-v_{\lambda_\star-\epsilon})_+ \, dx \\
			&\quad+ 2 L \norma*{\kappa}_{L^\infty(\R^n)} \int_{\Sigma_{\lambda_\star-\epsilon}} v^{p-1}
			(v-v_{\lambda_\star-\epsilon})_+^2 \, dx.
		\end{split}
	\end{equation}
	We take care of the two terms on the right-hand side. Since, by virtue of~\eqref{eq:vlambdastar-eps-vge0}, it holds~$\left(v-v_{\lambda_\star-\epsilon}\right)_+ =0$ a.e.~in~$K_\delta$, we get
	\begin{equation*}
		\begin{split}
			\int_{\Sigma_{\lambda_\star-\epsilon}} v^p
			(v-v_{\lambda_\star-\epsilon})_+ \, dx &= \int_{\Sigma_{\lambda_\star-\epsilon} \setminus K_\delta} v^p
			(v-v_{\lambda_\star-\epsilon})_+ \, dx \\
			&\leq \norma*{v}^p_{L^{2^\ast} \! \left(\Sigma_{\lambda_\star-\epsilon} \setminus K_\delta\right)} \,\norma*{(v-v_{\lambda_\star-\epsilon})_+}_{L^{2^\ast} \! \left(\Sigma_{\lambda_\star-\epsilon} \setminus K_\delta\right)} \\
			&\leq S^{-1}\norma*{v}^p_{L^{2^\ast} \! \left(\Sigma_{\lambda_\star-\epsilon} \setminus K_\delta\right)} \,\norma*{\nabla (v-v_{\lambda_\star-\epsilon})_+}_{L^{2} \left(\Sigma_{\lambda_\star-\epsilon}\right)},
		\end{split}
	\end{equation*}
	where we have used H\"older and Sobolev inequalities. To deal with the other summand we exploit the decomposition made at the beginning of this step and deduce
	\begin{equation*}
		\begin{split}
			\int_{\Sigma_{\lambda_\star-\epsilon}} v^{p-1}
			(v-v_{\lambda_\star-\epsilon})_+^2 \, dx &= \int_{\Sigma_{\lambda_\star-\epsilon} \setminus K_\delta} v^{p-1}
			(v-v_{\lambda_\star-\epsilon})_+^2 \, dx \\
			&= \int_{E_r^\epsilon} v^{p-1}
			(v-v_{\lambda_\star-\epsilon})_+^2 \, dx + \int_{S_\delta^\epsilon} v^{p-1}
			(v-v_{\lambda_\star-\epsilon})_+^2 \, dx.
		\end{split}
	\end{equation*}
	For the first term we have
	\begin{equation*}
		\int_{E_r^\epsilon} v^{p-1}
		(v-v_{\lambda_\star-\epsilon})_+^2 \, dx \leq S^{-2} \,\norma{v}^{2^\ast-2}_{L^{2^\ast} \! \left(E_r^\epsilon\right)} \int_{\Sigma_{\lambda_\star-\epsilon}} \,\abs*{\nabla (v-v_{\lambda_\star-\epsilon})_+}^2 \, dx,
	\end{equation*}
	exploiting again H\"older and Sobolev inequalities. For the second one we make use of the Poincar\'e inequality in slabs -- with constant~$\mathcal{C}_P (S^\epsilon_\delta) \coloneqq 2n \left(2 \delta+\epsilon\right)^2$ --, as in~\ref{step:contr} of the proof of Theorem~\ref{th:maintheorem-f}. We obtain
	\begin{equation*}
		\begin{split}
			\int_{S_\delta^\epsilon} v^{p-1}
			(v-v_{\lambda_\star-\epsilon})_+^2 \, dx &\leq \norma{v}^{p-1}_{L^\infty (S_\delta^\epsilon)} \int_{S_\delta^\epsilon}
			(v-v_{\lambda_\star-\epsilon})_+^2 \, dx \\
			&\leq
			\norma{v}^{2^\ast - 2}_{L^\infty(S_\delta^\epsilon)} \,\mathcal{C}_P(S^\epsilon_\delta) \int_{\Sigma_{\lambda_\star-\epsilon}} \,\abs*{\nabla (v-v_{\lambda_\star-\epsilon})_+}^2 \, dx.
		\end{split}
	\end{equation*}
	Going back to~\eqref{eq:asinstep2}, we get
	\begin{equation}
	\label{eq:contrad-11}
		\begin{split}
			& \mathcal{A}(r,\epsilon) \, \norma*{\nabla (v-v_{\lambda_\star-\epsilon})_+ }^2_{L^2\left(\Sigma_{\lambda_\star-\epsilon}\right)} 
			\\
			& \hspace{60pt} \leq  2 f_0 S^{-1} \defi(\kappa) \,\norma*{v}^p_{L^{2^\ast} \! \left(\Sigma_{\lambda_\star-\epsilon} \setminus K_\delta\right)} \,\norma*{\nabla (v-v_{\lambda_\star-\epsilon})_+}_{L^{2}\left(\Sigma_{\lambda_\star-\epsilon}\right)},
			\end{split}
	\end{equation}
	where
	\begin{equation*}
		\mathcal{A}(r,\epsilon) \coloneqq 1 - 2 L S^{-2} \, \norma*{\kappa}_{L^\infty(\R^n)} \,\norma{v}^{2^\ast-2}_{L^{2^\ast} \! \left(E_r^\epsilon\right)} - 2 L \,\norma*{\kappa}_{L^\infty(\R^n)} \,\norma{v}^{2^\ast - 2}_{L^\infty(S_\delta^\epsilon)} \,\mathcal{C}_P(S^\epsilon_\delta).
	\end{equation*}
	
	We plan to choose the parameters~$r$ and~$\epsilon$ in such a way that
	\begin{equation} \label{eq:Age12}
		\mathcal{A}(r,\epsilon) \geq \frac{1}{2}.
	\end{equation}
	Indeed, taking advantage of~\eqref{eq:bound-v}, we have that
	\begin{equation*}
		2 L S^{-2} \, \norma*{\kappa}_{L^\infty(\R^n)} \,\norma{v}^{2^\ast-2}_{L^{2^\ast} \! \left(E_r^\epsilon\right)} \le 2 L  S^{-2} \, \norma*{\kappa}_{L^\infty(\R^n)}  \left( \frac{C_0^{2^\ast} \Haus^{n - 1}(\sfera^{n - 1})}{n} \right)^{\!\! \frac{2}{n}} r^{-2} \le \frac{1}{4},
	\end{equation*}
	provided
	\begin{equation*}
		r \geq \frac{4 \sqrt{L \| \kappa \|_{L^\infty(\R^n)}}}{S} \left( \frac{C_0^{2^\ast} \Haus^{n - 1}(\sfera^{n - 1})}{n} \right)^{\!\!\frac{1}{n}} \!.
	\end{equation*}
	On the other hand, recalling the definition of~$\mathcal{C}_P(S^\epsilon_\delta)$, those of~$\lambda_1$ and~$\delta$ -- given in~\eqref{eq:lambdastar>Bdefalpha} and~\eqref{eq:hugeconstdefs} --, and again the upper bound~\eqref{eq:bound-v} on~$v$, we find
	\begin{equation}
	\label{eq:deltalambdatech}
		\begin{aligned}
			2 L \,\norma*{\kappa}_{L^\infty(\R^n)} \,\norma{v}^{2^\ast - 2}_{L^\infty(S_\delta^\epsilon)} \,\mathcal{C}_P(S^\epsilon_\delta) & \le 4 n L \,\norma*{\kappa}_{L^\infty(\R^n)} C_0^{2^\ast - 2} \, \frac{(2 \delta + \varepsilon)^2}{(\lambda_1 - \varepsilon)^4} \\
			& \le 4 n L \,\norma*{\kappa}_{L^\infty(\R^n)} C_0^{2^\ast - 2}  \, \frac{(2 \defi(\kappa)^{4 n^2 \alpha} + \varepsilon)^2}{(\defi(\kappa)^\alpha - \varepsilon)^4} \\
			& \le 2^{10} n L \,\norma*{\kappa}_{L^\infty(\R^n)} C_0^{2^\ast - 2} \defi(\kappa)^{4\left(2n^2-1\right)\alpha} \le \frac{1}{4},
		\end{aligned}
	\end{equation}
	provided~$\varepsilon \le \varepsilon_2 \coloneqq \frac{1}{2} \defi(\kappa)^{4 n^2 \alpha} \leq \frac{\lambda_1}{2}$ and
	\begin{equation*}
		\gamma \le \gamma_4 \coloneqq \left( \frac{1}{2^{12} n L \, \| \kappa \|_{L^\infty(\R^n)} C_0^{2^\ast - 2}} \right)^{\!\! \frac{1}{2\left(4n^2-1\right)\alpha}} \!.
	\end{equation*}
	Accordingly,~\eqref{eq:Age12} holds true and it follows from~\eqref{eq:contrad-11} that
	\begin{equation}
	\label{eq:preSobKelvin}
		\begin{aligned}
			& \norma*{\nabla (v-v_{\lambda_\star-\epsilon})_+ }^2_{L^2\left(\Sigma_{\lambda_\star-\epsilon}\right)} \\
			& \hspace{40pt} \le 4 f_0 S^{-1} \defi(\kappa) \,\norma*{v}^p_{L^{2^\ast} \! \left(\Sigma_{\lambda_\star-\epsilon} \setminus K_\delta\right)} \,\norma*{\nabla (v-v_{\lambda_\star-\epsilon})_+}_{L^{2}\left(\Sigma_{\lambda_\star-\epsilon}\right)}.
			\end{aligned}
	\end{equation}
	
	We shall now reach a contradiction with the definition of~$\lambda_\star$ by showing that
	\begin{equation} \label{eq:Kelvinlastclaim}
		\| (v - v_{\lambda_\star - \varepsilon})_+ \|_{L^\infty(\Sigma_{\lambda_\star - \varepsilon})} \le C_4 \defi(\kappa) \quad \mbox{for every } \varepsilon \in \left( 0, \min \left\{ \varepsilon_1, \varepsilon_2 \right\} \right] \!.
	\end{equation}
	In order to establish claim~\eqref{eq:Kelvinlastclaim}, we first notice that, clearly, one can assume without loss of generality that~$\norma*{\nabla (v-v_{\lambda_\star-\epsilon})_+}_{L^{2}\left(\Sigma_{\lambda_\star-\epsilon}\right)} > 0$, since otherwise~\eqref{eq:Kelvinlastclaim} would hold trivially. Under this supposition,~\eqref{eq:preSobKelvin} and the Sobolev inequality give that
	\begin{equation*}
		\norma*{(v-v_{\lambda_\star-\epsilon})_+ }_{L^{2^\ast} \! \left(\Sigma_{\lambda_\star-\epsilon}\right)} \le 4 f_0 S^{-2} \,\norma*{v}^p_{L^{2^\ast} \! \left(\Sigma_{\lambda_\star-\epsilon} \setminus K_\delta\right)} \defi(\kappa).
	\end{equation*}
	We now estimate the~$L^{2^\ast}$-norm of~$v$ over~$\Sigma_{\lambda_\star - \varepsilon} \setminus K_\delta$. Taking advantage of~\eqref{eq:bound-v}, we have
	\begin{equation}
	\label{eq:vErepsest}
		\| v \|_{L^{2^\ast} \! (E_r^\varepsilon)}^{2^\ast} \le C_0^{2^\ast} \int_{\R^n \setminus B_r(0)} \frac{dx}{|x|^{2 n}} = \frac{C_0^{2^\ast} \Haus^{n - 1}(\sfera^{n - 1})}{n} \, r^{- n}
	\end{equation}
	and, calculating as in~\eqref{eq:deltalambdatech},
	\begin{equation} \label{eq:vSdeltaepsest}
		\begin{aligned}
			\| v \|_{L^{2^\ast} \! (S_\delta^\varepsilon)}^{2^\ast} & \le |S^\varepsilon_\delta| \, \| v \|_{L^\infty(S_\delta^\varepsilon)}^{2^\ast} \le C_0^{2^\ast} (2 r)^{n - 1} \, \frac{2 \delta + \varepsilon}{(\lambda_1 - \varepsilon)^{2 n}} \\
			& \le 2^{3n + 1} C_0^{2^\ast} r^n \defi(\kappa)^{2n\left(2n-1\right)\alpha}.
		\end{aligned}
	\end{equation}
	By recalling the decomposition of~$\Sigma_{\lambda_\star - \varepsilon}$ seen before, we get
	\begin{equation}
	\label{eq:L2starestforv-vlambda-eps}
		\norma*{(v-v_{\lambda_\star-\epsilon})_+ }_{L^{2^\ast} \! \left(\Sigma_{\lambda_\star-\epsilon}\right)} \le C \left( r^{- \frac{n + 2}{2}} + r^{\frac{n + 2}{2}} \, \gamma^{(n^2-4)\alpha} \right) \defi(\kappa),
	\end{equation}
	for some constant~$C \ge 1$ depending only on~$n$,~$C_0$, and~$f_0$.
	
	We are now in position to establish~\eqref{eq:Kelvinlastclaim}. In light of~\eqref{eq:vlambdastar-eps-vge0}, we only need to obtain an~$L^\infty$-estimate for~$(v - v_{\lambda_\star- \varepsilon})_+$ on~$E_r^\varepsilon$ and~$S_\delta^\varepsilon$. To this aim, by arguing as in~\ref{step:MPstart-kelvin} we see that~$(v - v_{\lambda_\star - \varepsilon})_+$ satisfies
	\begin{equation*}
		\Delta (v - v_{\lambda_\star-\epsilon})_+ + \kappa^\ast c_{\lambda_\star - \varepsilon} \chi_{\{ v > v_{\lambda_\star - \varepsilon}\}} (v - v_{\lambda_\star - \varepsilon})_+ \ge - f_0 \defi(\kappa) \, v^p \, \chi_{\{ v > v_{\lambda_\star - \varepsilon}\}} \quad \mbox{in } \Sigma_{\lambda_\star - \varepsilon},
	\end{equation*}
	in the weak sense. For~$r \ge 4$, using~\eqref{eq:gLipgrowth} and~\eqref{eq:bound-v} we estimate
	\begin{equation*}
		\| {\kappa^\ast c_{\lambda_\star - \varepsilon} \chi_{\{ v > v_{\lambda_\star - \varepsilon}\}}} \|_{L^\infty(\R^n \setminus B_{r - 2}(0))} \le L \| \kappa \|_{L^\infty(\R^n)} \| v \|_{L^\infty(\R^n \setminus B_1(0))}^{p - 1} \le L C_0^{p - 1} \| \kappa \|_{L^\infty(\R^n)}
	\end{equation*}
	and, computing as in~\eqref{eq:vErepsest},
	\begin{equation*}
		\| {f_0 \defi(\kappa) v^p \chi_{\{ v > v_{\lambda_\star - \varepsilon}\}}} \|_{L^n(\R^n \setminus B_{r - 2}(0))} \le f_0 C_0^p \defi(\kappa) \left( \frac{\Haus^{n - 1}(\sfera^{n - 1})}{n(n + 1)} \right)^{\!\! \frac{1}{n}} r^{-n - 1}.
	\end{equation*}
	Hence, arguing exactly as in~\ref{step:int2point} of the proof of Theorem~\ref{th:maintheorem-f} we deduce, using~\eqref{eq:L2starestforv-vlambda-eps}, that
	\begin{equation} \label{eq:poinbound-Bepsr}
		\norma*{(v-v_{\lambda_\star-\epsilon})_+ }_{L^\infty(E_r^\varepsilon)} \le C_6 \left( r^{- \frac{n + 2}{2}} + r^{\frac{n + 2}{2}} \, \gamma^{(n^2-4)\alpha} \right) \defi(\kappa),
	\end{equation}
	for some constant~$C_6 \ge 1$ depending only on~$n$,~$C_0$,~$L$,~$f_0$, and~$\| \kappa \|_{L^\infty(\R^n)}$. To obtain the estimate over~$S_\delta^\varepsilon$, we first apply the Alexandrov-Bakelman-Pucci estimate for the Laplacian -- see, e.g.,~\cite[Theorem~9.1]{gt} --, which gives
	\begin{equation} \label{eq:Sepsdeltaest1}
		\begin{aligned}
			& \norma*{(v-v_{\lambda_\star-\epsilon})_+ }_{L^\infty(S_\delta^\varepsilon)} \\
			& \hspace{15pt} \le \sup_{\partial S_\delta^\varepsilon} \, (v-v_{\lambda_\star-\epsilon})_+ \\
			& \hspace{15pt} \quad + C_7 \, \mbox{diam}(S_\delta^\varepsilon) \, \big\| {\left( f_0 \defi(\kappa) \, v^p \, \chi_{\{ v > v_{\lambda_\star - \varepsilon}\}} + \kappa^\ast c_{\lambda_\star - \varepsilon} (v - v_{\lambda_\star - \varepsilon})_+ \right)_+} \big\|_{L^n(S_\delta^\varepsilon)} \\
			& \hspace{15pt} \le C_6 \left( r^{- \frac{n + 2}{2}} + r^{\frac{n + 2}{2}} \, \gamma^{(n^2-4)\alpha} \right) \defi(\kappa) + 2 C_7 \, r \, \Big( {f_0 \defi(\kappa) \, \| v^p \|_{L^n(S_\delta^\varepsilon)}} \\
			& \hspace{15pt} \quad + {\| {\kappa^\ast c_{\lambda_\star - \varepsilon} \chi_{\{ v > v_{\lambda_\star - \varepsilon}\}}} \|_{L^n(S_\delta^\varepsilon)} \, \norma*{(v-v_{\lambda_\star-\epsilon})_+ }_{L^\infty(S_\delta^\varepsilon)}} \Big),
		\end{aligned}	
	\end{equation}
	for some dimensional constant~$C_7 \ge 1$. For the second inequality we have used that the diameter of~$S_\delta^\varepsilon$ is less or equal to~$2 r$ as well as~\eqref{eq:vlambdastar-eps-vge0} and~\eqref{eq:poinbound-Bepsr} to estimate~$(v-v_{\lambda_\star-\epsilon})_+$ on~$\partial S_\delta^\varepsilon$. Through computations similar to~\eqref{eq:vSdeltaepsest}, we get
	\begin{equation*}
		\| v^p \|_{L^n(S_\delta^\varepsilon)} \le 2^{n + 4} \, C_0^p \, r \, \gamma^{(3n-2)\alpha}
	\end{equation*}
	and
	\begin{align*}
		\| {\kappa^\ast c_{\lambda_\star - \varepsilon} \chi_{\{ v > v_{\lambda_\star - \varepsilon}\}}} \|_{L^n(S_\delta^\varepsilon)} & \le L \, \| \kappa \|_{L^\infty(\R^n)} \| v^{p - 1} \|_{L^n(S_\delta^\varepsilon)} \\
		& \le 2^6 L \, \| \kappa \|_{L^\infty(\R^n)} \, C_0^{p - 1} \, r \, \gamma^{4(n-1)\alpha}.
	\end{align*}
	The last inequality implies in particular that
	\begin{equation*}
		2 C_7 \, r \, \| {\kappa^\ast  c_{\lambda_\star - \varepsilon}  \chi_{\{ v > v_{\lambda_\star - \varepsilon}\}}} \|_{L^n(S_\delta^\varepsilon)} \le \frac{1}{2},
	\end{equation*}
	provided
	\begin{equation*}
		\gamma \le \gamma_5 \coloneqq \left(\frac{1}{2^8 L \, C_0^{p - 1} C_7 \, \| \kappa \|_{L^\infty(\R^n)} \, r^2} \right)^{\!\! \frac{1}{4(n - 1)\alpha}} \!.
	\end{equation*}
	By this we can reabsorb the term involving~$\norma*{(v-v_{\lambda_\star-\epsilon})_+ }_{L^\infty(S_\delta^\varepsilon)}$ to the left of~\eqref{eq:Sepsdeltaest1}, obtaining
	\begin{equation*}
		\norma*{(v-v_{\lambda_\star-\epsilon})_+ }_{L^\infty(S_\delta^\varepsilon)} \le C \left( r^{- \frac{n + 2}{2}} + r^{\frac{n + 2}{2}} \, \gamma^{(3n-4)\alpha} \right) \defi(\kappa).
	\end{equation*}
	By combining this with~\eqref{eq:vlambdastar-eps-vge0} and~\eqref{eq:poinbound-Bepsr}, we infer that
	\begin{equation*}
		\norma*{(v-v_{\lambda_\star-\epsilon})_+ }_{L^\infty(\Sigma_{\lambda_\star - \varepsilon})} \le C_8 \left( r^{- \frac{n + 2}{2}} + r^{\frac{n + 2}{2}} \, \gamma^{(3n-4)\alpha} \right) \defi(\kappa),
	\end{equation*}
	for some constant~$C_8 \ge 1$ depending only on~$n$,~$C_0$,~$L$,~$f_0$, and~$\| \kappa \|_{L^\infty(\R^n)}$. Claim~\eqref{eq:Kelvinlastclaim} immediately follows from this, by taking
	\begin{equation} \label{eq:rdef}
			r \coloneqq \max \left\{ \lambda_0 + 1, 4, \frac{4 \sqrt{L \| \kappa \|_{L^\infty(\R^n)}}}{S} \left( \frac{C_0^{2^\ast} \Haus^{n - 1}(\sfera^{n - 1})}{n} \right)^{\!\!\frac{1}{n}}, \left( \frac{2 C_8}{C_4} \right)^{\!\! \frac{2}{n + 2}} \right\}
	\end{equation}
	and
	\begin{equation*}
		\gamma \le \gamma_6 \coloneqq \left( \frac{C_4}{2 C_8} \, \frac{1}{r^{\frac{n + 2}{2}}} \right)^{\!\! \frac{1}{(3n-4)\alpha}} \!.
	\end{equation*}
	Since~\eqref{eq:Kelvinlastclaim} clearly contradicts the definition of~$\lambda_\star$, we conclude that~\eqref{eq:lambdastar>Bdefalpha} is false and therefore that
	\begin{equation*}
		\lambda_\star \le \lambda_1 = \mathcal{B} \defi(\kappa)^\alpha,
	\end{equation*}
	where~$\alpha$ and~$\mathcal{B}$ are defined in~\eqref{eq:hugeconstdefs}, with~$r$ prescribed by~\eqref{eq:rdef}.

	
	\subsection{Almost symmetry in one direction, outside of a small ball.}
	
	So far, we have proven that
	\begin{equation} \label{eq:almsymrecap}
		v(x) - v_\lambda(x) \leq C_4 \defi(\kappa) \quad \mbox{for every } x \in \Sigma_{\lambda} \setminus \{0^\lambda\} \mbox{ and } \lambda \in [\lambda_1, +\infty ).
	\end{equation}
	We shall here deduce from this that
	\begin{equation} \label{eq:almsymclaim}
		v(x', x_n) - v(x', - x_n) \le C_\flat \defi(\kappa)^{\alpha_\flat} \quad \mbox{for every } x \in \R^n \setminus B_{r_\flat}(0),
	\end{equation}
	with~$r_\flat$ as in~\eqref{eq:rflatdef}, for some constants~$C_\flat \ge 1$ and~$\alpha_\flat \in (0, 1)$ depending only on~$n$,~$C_0$,~$L$,~$f_0$, and~$\| \kappa \|_{L^\infty(\R^n)}$, and provided~$\gamma$ is suitably small.
	
	To this end, let~$Q$ be the open cube of center~$0$ and sides parallel to the coordinate axis of length~$2 \ell$, with~$\ell \in (4 \lambda_1, 1]$ to be determined. Let~$x \in \Sigma_0 \setminus Q$, where~$\Sigma_0 = \{ x \in \R^n \,\lvert\, x_n > 0 \}$. Suppose first that~$x \in \Sigma_{4 \lambda_1}$. Using~\eqref{eq:almsymrecap}, we get
	\begin{align*}
		v(x', x_n) - v(x', - x_n) & = v(x) - v_{\lambda_1}(x) + v(x', 2 \lambda_1 - x_n) - v(x', -x_n) \\
		& \le C_4 \defi(\kappa) + 2 \lambda_1 \, \sup_{t \in [0, 2\lambda_1]} \abs*{\partial_n v (x', \cdot - x_n)}.
	\end{align*}
	As~$x \in \Sigma_{4 \lambda_1} \setminus Q$, for every~$t \in [0, 2 \lambda_1]$ we have~$|t - x_n| = x_n - t \ge x_n - 2 \lambda_1 \ge \frac{x_n}{2}$ and therefore~$|(x', t - x_n)| \ge \frac{|x|}{2} \ge \frac{\ell}{2}$, where for the last inequality we took advantage of the fact that~$B_\ell(0) \subseteq Q$. Hence, recalling the gradient bound~\eqref{eq:bound-gradv} we conclude that
	\begin{equation*}
		v(x', x_n) - v(x', - x_n) \le C_4 \defi(\kappa) + 2^{n + 2} C_2 \, \mathcal{B} \, \frac{\defi(\kappa)^\alpha}{\ell^n} \le \left( C_4 + 1 \right) \defi(\kappa)^{\frac{\alpha}{2}},
	\end{equation*}
	provided we take
	\begin{equation*}
		\ell \coloneqq 4 \left( C_2 \, \mathcal{B} \right)^{\frac{1}{n}} \defi(\kappa)^{\frac{\alpha}{2 n}},
	\end{equation*}
	\begin{equation*}
		\gamma \le \gamma_7 \coloneqq \min\left\{\left( \frac{1}{4^n \, C_2 \, \mathcal{B}} \right)^{\!\! \frac{2}{\alpha}},\left(\frac{1}{2 \mathcal{B}}\right)^{\!\frac{2n}{\alpha\left(2n-1\right)}}\right\} \!.
	\end{equation*}
	If, on the other hand,~$x \in \Sigma_0 \setminus \left( Q \cup \Sigma_{4 \lambda_1} \right)$, then~$x_n \in (0, 4 \lambda_1]$ and~$|x'| \ge \ell$. Consequently, by taking advantage of~\eqref{eq:bound-gradv} once again we obtain
	\begin{equation*}
		v(x', x_n) - v(x', - x_n) \le 2 x_n \, \norma*{\partial_n(x', \cdot)}_{L^\infty([- x_n, x_n])} \le 16 \, C_2 \, \frac{\lambda_1}{\ell^{n}} \le \defi(\kappa)^{\frac{\alpha}{2}}.
	\end{equation*}
	Overall, we have established the upper bound claimed in~\eqref{eq:almsymclaim}, with constants~$\alpha_\flat = \frac{\alpha}{2}$ and~$C_\flat \coloneqq \max \left\{ C_4 + 1, 4 \sqrt{n} \left( C_2 \, \mathcal{B} \right)^{\frac{1}{n}} \right\}$. By reproducing Steps~2-4 along the direction~$- e_n$ instead of~$e_n$, one clearly gets an analogous -- negative -- lower bound. As a result,
	\begin{equation*}
		|v(x', x_n) - v(x', - x_n)| \le C_\flat \defi(\kappa)^{\alpha_\flat} \quad \mbox{for every } x \in \R^n \setminus B_{r_\flat}(0),
	\end{equation*}
	with~$r_\flat$ as in~\eqref{eq:rflatdef}. Up to a rotation, this is precisely claim~\eqref{eq:mainclaim-Kelvin}.
	
	
	\subsection{Return to the original function.}
	
	In this step we shall go back to the original function~$u$ and prove that
	\begin{equation*}
		\left| u(x) - u \big( {x - 2 (\omega \cdot x) \omega} \big) \right| \le C_\sharp \defi(\kappa)^{\alpha_\sharp} \quad \mbox{for every } x \in \R^n \mbox{ and } \omega \in \sfera^{n - 1},
	\end{equation*}
	for some constants~$C_\sharp \ge 1$ and~$\alpha_\sharp \in (0, 1)$ depending only on universal quantities, provided~$\gamma$ is suitably small. Clearly, this inequality is equivalent to~\eqref{eq:alm_sym_thm1}.

	As customary, up to a rotation we may assume that~$\omega = e_n$ and therefore limit ourselves to only checking that
	\begin{equation} \label{eq:finalKelvclaim}
		|u(x', x_n) - u(x', - x_n)| \le C_\sharp \defi(\kappa)^{\alpha_\sharp} \quad \mbox{for every } x \in \R^n.
	\end{equation}
	First of all, from~\eqref{eq:mainclaim-Kelvin} we easily deduce that
	\begin{equation} \label{eq:mainclaim-Kelvinreboot}
		|u(x', x_n) - u(x', - x_n)| \le C_\flat \, \frac{\defi(\kappa)^{\alpha_\flat}}{|x|^{n - 2}} \quad \mbox{for every } x \in B_{R_\flat}(0) \setminus \{ 0 \},
	\end{equation}
	with~$R_\flat \coloneqq r_\flat^{-1} = \left( C_\flat \defi(\kappa)^{\frac{\alpha_\flat}{n}} \right)^{\!-1}$. Note that this bound is not particularly meaningful near the origin. Nevertheless, it yields~\eqref{eq:finalKelvclaim} with~$\alpha_\sharp = \alpha_\flat$ and for~$x \in B_{R_\flat}(0) \setminus B_{r_\sharp}(0)$, for any universal radius~$r_\sharp \in (0, 1]$ -- of course, the constant~$C_\sharp$ would then depend on~$r_\sharp$. We now address the validity of~\eqref{eq:finalKelvclaim} for~$x \in B_{r_\sharp}(0)$ and~$x \in \R^n \setminus B_{R_\flat}(0)$.
	
	To obtain the estimate in~$B_{r_\sharp}(0)$, we observe that the function~$w(x) \coloneqq u(x', x_n) - u(x', -x_n)$ is of class~$C^1$ in~$\R^n$,~$w \in W^{1,2}_{\loc}(\R^n)$, and satisfies
	\begin{equation*}
		-\Delta w \le \| \kappa \|_{L^\infty(\R^n)} \, \hat{c} \, w + f_0 \defi(\kappa) u^p \quad \mbox{in } \R^n,
	\end{equation*}
	where
	\begin{equation*}
		\hat{c}(x) \coloneqq \begin{dcases}
			\frac{f(u(x', x_n)) - f(u(x', -x_n))}{u(x', x_n) - u(x', -x_n)} & \quad \mbox{if } u(x', x_n) \ne u(x', -x_n), \\
			0 & \quad \mbox{if } u(x', x_n) = u(x', -x_n).
		\end{dcases}
	\end{equation*}
	Thus, we can apply the Alexandrov-Bakelman-Pucci estimate in the domain~$B_{r_\sharp}(0)$, getting
	\begin{equation*}
		\sup_{B_{r_\sharp}(0)} w \leq \sup_{\partial B_{r_\sharp}(0)} w_+ + C_9 \, r_\sharp^2 \left( L \, C_0^{p-1} \norma{\kappa}_{L^\infty(\R^n)} \sup_{B_{r_\sharp}(0)} w_+ + f_0 \, C_0^p \defi(\kappa) \right) \!,
	\end{equation*}
	for some dimensional constant~$C_9 \ge 1$. By choosing
	\begin{equation*}
		r_\sharp \coloneqq \min \left\{ \sqrt{\frac{1}{2 L \, C_0^{p - 1} C_9 \| \kappa \|_{L^\infty(\R^n)}}}, 1 \right\},
	\end{equation*}
	we can reabsorb the first term inside the round brackets to the left-hand side and infer, using~\eqref{eq:mainclaim-Kelvinreboot}, that
	\begin{equation*}
		\sup_{B_{r_\sharp}(0)} w_+ \le 2 \left( \sup_{\partial B_{r_\sharp}(0)} w_+ + f_0 \, C_0^p \, C_9 \defi(\kappa) \right) \le 2 \left( \frac{C_\flat}{r_\sharp^{n - 2}} + f_0 \, C_0^p \, C_9 \right) \defi(\kappa)^{\alpha_\flat}.
	\end{equation*}
	Since, by interchanging~$u(x)$ and~$u(x', -x_n)$, an analogous bound on~$w_-$ can be obtained, we conclude that~\eqref{eq:finalKelvclaim} holds true for every~$x \in B_{r_\sharp}(0)$.
	
	We are left with establishing~\eqref{eq:finalKelvclaim} for~$x \in \R^n \setminus B_{R_\flat}(0)$. This is indeed an immediate consequence of the decay assumption~\eqref{eq:u-below}, whose application yields
	\begin{equation} \label{eq:uevenfaraway}
	\begin{aligned}
		|u(x', x_n) - u(x', -x_n)| & \le \max \big\{ {u(x', x_n), u(x', -x_n)} \big\} \le \frac{1}{\defi(\kappa)^{\sigma} |x|^\nu} \\
		& \le \frac{1}{R_\flat^\nu \defi(\kappa)^{\sigma}} = C_\flat^\nu \defi(\kappa)^{\frac{\alpha_\flat \nu}{ n} - \sigma},
	\end{aligned}
	\end{equation}
	for every~$x \in \R^n \setminus B_{R_\flat}(0)$. Note that~\eqref{eq:u-below} can be applied to such~$x$'s provided that~$R_\flat \ge R_0$, which holds true if we restrict ourselves to
	\begin{equation*}
		\gamma \leq \gamma_8 \coloneqq \left( \frac{R_0}{C_\flat} \right)^{\frac{n}{\alpha_\flat}}.
	\end{equation*}
	By taking
	\begin{equation*}
		\sigma \le \frac{\alpha_\flat \nu}{2 n},
	\end{equation*}
	we deduce from inequality~\eqref{eq:uevenfaraway} that
	\begin{equation*}
		|u(x', x_n) - u(x', -x_n)| \le C_\flat^\nu \defi(\kappa)^{\frac{\alpha_\flat \nu}{2 n}} \quad \mbox{for every } x \in \R^n \setminus B_{R_\flat}(0).
	\end{equation*}
	As a result, claim~\eqref{eq:finalKelvclaim} and the proof of estimate~\eqref{eq:alm_sym_thm1} is complete.
	

	\subsection{Almost radial symmetry in the $C^2$ sense}
	In this conclusive step, we establish the validity of the~$C^2$-estimate~\eqref{eq:alm_symC2_thm2}. This is a rather straightforward consequence of~\eqref{eq:alm_sym_thm1} along with the Schauder estimates for the Laplacian -- see, e.g., Corollary~2.16 of~\cite{rosotonreal}. Nevertheless, we will include all the details for the benefit of the reader.

	Assume for the remainder of the proof that~$\kappa \in C^{0, \tau}(\R^n)$. First of all, similarly to what we did in~\ref{step:prel-obs} of Theorem~\ref{th:maintheorem-f}, we can suppose that~$\defi(\kappa) \le 1$. Indeed, when~$\defi(\kappa) > 1$ estimate~\eqref{eq:alm_symC2_thm2} follows at once from the fact that $\| u \|_{C^2(\R^n)} \le C \left( 1 + \left[\kappa\right]_{C^{0,\tau}(\R^n)}\right)$, consequence of the Schauder theory applied to equation~\eqref{eq:mainprob-f}. Here and in the remainder of the proof,~$C$ indicates a constant larger than~$1$ depending only on universal quantities and~$\tau$.
	
	Let~$\Theta$ be any rotation with center the origin and write~$u_\Theta(x) \coloneqq u(\Theta x)$ for~$x \in \R^n$. Since~$|\Theta x| = |x|$, by~\eqref{eq:alm_sym_thm1} we have
	\begin{equation}
	\label{eq:u-uT-Linf}
		\norma*{u-u_\Theta}_{L^\infty(\R^n)} \leq C \defi(\kappa)^\vartheta.
	\end{equation}
	We first claim that
	\begin{equation}
	\label{eq:u-uT-C1}
		\norma*{u-u_\Theta}_{C^1(\R^n)} \leq C \defi(\kappa)^\vartheta.
	\end{equation}
	To this aim, observe that~$u - u_\Theta$ solves
	\begin{equation} \label{eq:eqforu-uTheta}
		\Delta (u-u_\Theta) = \kappa_\Theta f(u_\Theta) - \kappa f(u) \quad \mbox{in } \R^n,
	\end{equation}
	with~$\kappa_\Theta(x) \coloneqq \kappa(\Theta x)$ for~$x \in \R^n$. Since, thanks to~\eqref{eq:C0_def},~\eqref{eq:ip-f-sub},~\eqref{eq:ip-f-lip-monot}, and~\eqref{eq:u-uT-Linf},
	\begin{equation} \label{eq:Linftykf-kfest}
	\begin{aligned}
		& \norma*{\kappa_\Theta f(u_\Theta) - \kappa f(u)}_{L^\infty(\R^n)} \\
		& \hspace{20pt} \le \norma*{\kappa_\Theta}_{L^\infty(\R^n)} \norma*{f(u_\Theta) - f(u)}_{L^\infty(\R^n)} + \norma*{f(u)}_{L^\infty(\R^n)} \norma*{\kappa_\Theta - \kappa}_{L^\infty(\R^n)} \\
		& \hspace{20pt} \le L C_0^{p - 1} \norma*{\kappa}_{L^\infty(\R^n)} \norma*{u_\Theta - u}_{L^\infty(\R^n)} + f_0 C_0^p \norma*{\kappa_\Theta - \kappa}_{L^\infty(\R^n)} \le C \defi(\kappa)^\vartheta,
	\end{aligned}
	\end{equation}
	by standard~$C^1$-estimates for the Laplacian -- for instance, a suitably translated version of Proposition~2.18 of~\cite{rosotonreal} -- we easily deduce the validity of claim~\eqref{eq:u-uT-C1}.

	We now claim that, for every~$\varepsilon \in (0, 1)$,
	\begin{equation}
	\label{eq:u-uT-C2}
		\norma*{u-u_\Theta}_{C^2(\R^n)} \leq C_\varepsilon \left( 1 + \left[\kappa\right]_{C^{0,\tau}(\R^n)}\right) \defi(\kappa)^{(1 - \varepsilon) \vartheta},
	\end{equation}
	with~$C_\varepsilon \ge 1$ possibly depending on~$\varepsilon$ as well. We plan to deduce~\eqref{eq:u-uT-C2} from the Schauder theory applied to equation~\eqref{eq:eqforu-uTheta}. In order to do this, we need an estimate for the H\"older norm of its right-hand side. For, let~$\sigma \in (0, \tau)$ and compute
	\begin{align*}
		& [\kappa_\Theta f(u_\Theta) - \kappa f(u)]_{C^{0, \sigma}(\R^n)} \\
		& \hspace{20pt} \le [(\kappa_\Theta - \kappa) f(u_\Theta)]_{C^{0, \sigma}(\R^n)} + [\kappa (f(u_\Theta) - f(u))]_{C^{0, \sigma}(\R^n)} \\
		& \hspace{20pt} \le \norma*{\kappa_\Theta - \kappa}_{L^\infty(\R^n)} [f(u_\Theta)]_{C^{0, \sigma}(\R^n)} + \norma*{f(u_\Theta)}_{L^\infty(\R^n)} [\kappa_\Theta - \kappa]_{C^{0, \sigma}(\R^n)} \\
		& \hspace{20pt} \quad\, + \norma*{\kappa}_{L^\infty(\R^n)} [f(u_\Theta) - f(u)]_{C^{0, \sigma}(\R^n)} + \norma*{f(u_\Theta) - f(u)}_{L^\infty(\R^n)} [\kappa]_{C^{0, \sigma}(\R^n)}.
	\end{align*}
	Using the interpolation inequality
	\begin{equation*}
		[v]_{C^{0, \alpha}(\R^n)} \le 2 \, \norma*{v}_{L^\infty(\R^n)}^{1 - \frac{\alpha}{\beta}} \, [v]_{C^{0, \beta}(\R^n)}^{\frac{\alpha}{\beta}} \quad \mbox{for every } v \in C^{0, \beta}(\R^n) \mbox{ and } 0 < \alpha < \beta \le 1,
	\end{equation*}
	along with~\eqref{eq:C0_def},~\eqref{eq:ip-f-sub},~\eqref{eq:ip-f-lip-monot},~\eqref{eq:C1boundonu-1}, and~\eqref{eq:u-uT-Linf}, the previous estimate yields
	\begin{align*}
		& [\kappa_\Theta f(u_\Theta) - \kappa f(u)]_{C^{0, \sigma}(\R^n)} \\
		& \hspace{20pt} \le C \left( \defi(\kappa) + [\kappa]_{C^{0, \tau}(\R^n)}^{\frac{\sigma}{\tau}} \defi(\kappa)^{1 - \frac{\sigma}{\tau}} + \defi(\kappa)^{\left( 1 - \frac{\sigma}{\tau} \right) \vartheta} + [\kappa]_{C^{0, \tau}(\R^n)}^{\frac{\sigma}{\tau}} \defi(\kappa)^{\vartheta} \right) \\
		& \hspace{20pt} \le C \left( 1 + [\kappa]_{C^{0, \tau}(\R^n)}^{\frac{\sigma}{\tau} } \right) \defi(\kappa)^{\left( 1 - \frac{\sigma}{\tau} \right) \vartheta}.
	\end{align*}
	By virtue of this and the~$L^\infty$-bound~\eqref{eq:Linftykf-kfest}, Schauder~$C^{2, \sigma}$-estimates applied to equation~\eqref{eq:eqforu-uTheta} immediately lead to the claim~\eqref{eq:u-uT-C2}, provided we take~$\sigma = \varepsilon \tau$.
	
	This concludes the proof of the~$C^2$-estimate~\eqref{eq:alm_symC2_thm2} and, with it, that of Theorem~\ref{th:maintheorem-kelvin}.

	\begin{remark}
	\label{rem:monot-f}
	If~$f$ is not assumed to be non-decreasing -- i.e., if assumption~\eqref{eq:ip-f-lip-monot} is replaced by the weaker requirement that~$|f(u_2) - f(u_1)| \le L u_2^{p - 1} (u_2 - u_1)$ for every~$0 \le u_1 < u_2 \le C_0$ --, the proof just displayed works almost identically, apart from a small modification needed in~\ref{step:find-pos} which, however, seems to have a non-negligible impact on the statement of Theorem~\ref{th:maintheorem-kelvin}.

	Indeed, if the monotonicity of~$f$ is dropped, the coefficient~$c_\lambda$ defined in~\eqref{eq:clambdadefkelv} no longer has a sign and, as a result, one needs to account for the presence of the zero-th order term~$\kappa_{\lambda_\star}^\ast (c_{\lambda_\star})_- \, w$ on the left-hand side of~\eqref{eq:perRemark}. This creates some issues when applying the weak Harnack inequality of, say, Theorem~8.18 in~\cite{gt}, since the coefficient of said zero-th order term has size comparable to~$|x|^{-4}$ at a point $x \in \Sigma_{\lambda_\star + 2 \delta} \setminus \Sigma_1$. This size is manageable as long as we only consider balls~$B_\varrho(x)$ of radius~$\varrho \leq C \, |x|^2$. In order to deal with this limitation, one needs to modify point~\ref{3-step_c} of the 3-step procedure displayed on page~\pageref{3-step_c}, for instance by using there a family of balls having constant radius~$\delta$ in order to go from~$x$ to, say,~$x + e_n$, and then linearly expanding balls to go from~$x + e_n$ to~$x + 2 e_n$. This would, however, produce on the right-hand side of~\eqref{eq:iterated_wHi} a factor having exponential dependence on~$\delta$, instead of polynomial -- a fact that would ultimately impact negatively estimates~\eqref{eq:alm_sym_thm1} and~\eqref{eq:alm_symC2_thm2}, making them having logarithmic dependence on the deficit of~$\kappa$, like~\eqref{eq:mainLinftyest} and~\eqref{eq:mainD12est_thm1.1} of Theorem~\ref{th:maintheorem-f}.
	\end{remark}

	
	\section*{Acknowledgments} 
	\noindent The authors have been partially supported by the “Gruppo Nazionale per l'Analisi Matematica, la Probabilità e le loro Applicazioni” (GNAMPA) of the “Istituto Nazionale di Alta Matematica” (INdAM, Italy) and by the Research Project of the Italian Ministry of University and Research (MUR) Prin 2022 “Partial differential equations and related geometric-functional inequalities”, grant number 20229M52AS\_004. The second author has also been supported by the Spanish grants PID2021-123903NB-I00 and RED2022-134784-T funded by MCIN/AEI/10.13039/501100011033 and by ERDF “A way of making Europe”.

	
	\bigskip

\end{document}